\newif\ifOld\Oldfalse %\Oldtrue ignores all changes
\newif\ifChanges %\Changestrue print additions in green and deletions in red. \Changesfalse prints additions in default and ignores deletions
\newtheorem{thm}{Theorem}[section]
\newtheorem{cor}[thm]{Corollary}
\newtheorem{lem}[thm]{Lemma}
\theoremstyle{definition}
\newtheorem{defn}[thm]{Definition}
\newtheorem{remark}[thm]{Remark}
\newtheorem{hyp}[thm]{Hypothesis}
\numberwithin{equation}{section}
\newcommand{\ie}{i.e.}
\newcommand{\vertiii}[1]{{\left\vert\kern-0.25ex\left\vert\kern-0.25ex\left\vert #1\right\vert\kern-0.25ex\right\vert\kern-0.25ex\right\vert}}
\newcommand{\ov}{\overline}
\renewcommand{\Im}{\operatorname{Im}}
\newcommand{\supp}{\operatorname{supp}}
\renewcommand{\span}{\operatorname{span}}
\newcommand{\e}{\operatorname{e}}
\newcommand{\dom}{\operatorname{dom}}
\newcommand{\ran}{\operatorname{ran}}
\newcommand{\tr}{\operatorname{tr}}
\newcommand{\BV}{\operatorname{BV}}
\newcommand{\BVl}{\operatorname{BV}_{\rm loc}}
\newcommand{\Var}{\operatorname{Var}}
\newcommand{\sgn}{\operatorname{sgn}}
\newcommand{\sm}[1]{\big(\begin{smallmatrix}#1\end{smallmatrix}\big)}
\newcommand{\bb}[1]{{\mathbb{#1}}}
\newcommand{\mc}[1]{{\mathcal{#1}}}
\newcommand{\id}{\mathbbm 1}
\newcommand{\cbv}{e}
\newcommand{\ol}{{\overline{\lambda}}}
\newcommand{\la}{\lambda}
\newcommand{\loc}{{\rm loc}}
\newcommand{\mx}{{\rm max}}
\newcommand{\mn}{{\rm min}}
\newcommand{\iOmega}{(a,b)}
\newcommand{\pim}{\pi}
\newcommand{\Pim}{\Pi}
\newcommand{\RN}{{\tilde W}}
\newcommand{\exsum}{\times}
\newcommand{\<}{\langle}
\renewcommand{\>}{\rangle}
\begin{document}
\title[Spectral Theory]{Spectral Theory for Systems of Ordinary Differential Equations with Distributional Coefficients}%
\author{Ahmed Ghatasheh and Rudi Weikard}%
\address{A.G.: Mathematics Department, The Ohio State University at Marion, Marion, OH 43302, USA}%
\email{ghatasheh.1@osu.edu}%
\address{R.W.: Department of Mathematics, University of Alabama at Birmingham, Birmingham, AL 35226-1170, USA}%
\email{weikard@uab.edu}%

\date{18. September 2019}%
\thanks{\copyright 2019. This manuscript version is made available under the CC-BY-NC-ND 4.0 license \texttt{http://creativecommons.org/licenses/by-nc-nd/4.0/}}%

%\keywords{47B25, 47A10, 42A38, 47A06;  also 34L05, 81Q10}

\begin{abstract}
We study the spectral theory for the first-order system $Ju'+qu=wf$ of differential equations on the real interval $(a,b)$ when $J$ is a constant, invertible skew-Hermitian matrix and $q$ and $w$ are matrices whose entries are distributions of order zero with $q$ Hermitian and $w$ non-negative.
Also, we do not pose the definiteness condition often required for the coefficients of the equation.
Specifically, we construct minimal and maximal relations, and study self-adjoint restrictions of the maximal relation.
For these we determine Green's function and prove the existence of a spectral (or generalized Fourier) transformation.
We have a closer look at the special cases when the endpoints of the interval $(a,b)$ are regular as well as the case of a $2\times2$ system.
Two appendices provide necessary details on distributions of order zero and the abstract spectral theory for relations.
\end{abstract}

\maketitle

\section{Introduction}
In this paper we study the spectral theory for the first-order system
$$Ju'+qu=wf$$
of differential equations on the real interval $\iOmega$ when $J$ is a constant, invertible skew-Hermitian matrix and $q$ and $w$ are matrices whose entries are distributions of order zero\footnote{Appendix \ref{ADM} gathers the basic properties of distributions of order $0$ and the closely related functions of locally bounded variation.
In particular, we recall that on compact subintervals of $(a,b)$ distributions of order zero may be thought of as measures and we may use both words interchangeably.}
with $q$ Hermitian and $w$ non-negative.
For appropriate functions $f$ (those for which the components of $wf$ are also distributions of order zero) solutions of this differential equation will have to be sought among the functions of locally bounded variation.
To pursue our program we will have to utilize a spectral theory based on linear relations rather than linear operators.\footnote{A review of the relevant material is given in Appendix \ref{ALR}.}
Our main result is Theorem~\ref{t:main}.

Interest in the spectral theory for differential equations arose in Fourier's work \cite{Fourier1822} on the heat equation.
In the 1830s Sturm and Liouville generalized Fouri\-er's ideas to cover what is now known as the Sturm-Liouville equation, viz., the equation $-(pu')'+qu=\la w u$, posed on a finite interval.
In 1923 Birkhoff and Langer \cite{zbMATH02599961} treated a first-order system of differential equations extending earlier work by Birkhoff \cite{MR1500818} on a scalar equation of higher order (with continuous coefficients).
In 1910 Weyl \cite{41.0343.01} showed how to deal with singular problems, i.e., problems on unbounded intervals or problems where the coefficients are allowed to have singularities at the endpoints.
Another generalization, albeit sometimes ignored, was to admit locally integrable rather than continuous coefficients\footnote{In his book \cite{MR2170950} Zettl writes: ``It is surprising how many authors ... assume continuity of the coefficients when local Lebesgue integrability suffices.''}.
It is, however, desirable, to allow coefficients even more general than locally integrable ones.
The first to do so (as far as we know) was Krein \cite{MR0054078} in 1952 when he modeled a vibrating string by a Sturm-Liouville equation with $p=1$ and $q=0$ but $w$ a positive Lebesgue-Stieltjes measure.
Shortly thereafter Kac \cite{MR0080835} generalized this approach by also allowing $q$ to be a measure, see also Mingarelli \cite{MR706255}.
Other contributions were made by Feller \cite{MR0068082} in 1955 who introduced measures implicitly through Radon-Nikodym derivatives and by Gesztesy and Holden \cite{MR914699} in 1987 who described Schr\"odinger equations with point interactions, specifically $\delta'$-interactions, see also Albeverio et al. \cite{MR2105735}, Kurasov \cite{MR1397901}, and Kurasov and Boman\cite{MR1443392}.
In 1999 Savchuk and Shkalikov \cite{MR1756602} stirred great interest in Schr\"odinger equations with potentials in $W^{-1,2}_{\rm\loc}$.
However, it was pointed out by Eckhardt et al. \cite{MR3046408} in 2013 that such equations can be cast as first-order $2\times2$-systems whose coefficients are locally integrable.\footnote{This paper provides also a thorough study of the subject's history.}
Eckhardt and Teschl \cite{MR3095152} considered a Sturm-Liouville equation where the coefficients are distributions of order $0$.
As a system their equation reads $Ju'+qu=wf$ where $J=\sm{0&-1\\1&0}$, $q=\sm{\chi&0\\0&-\varsigma}$, and $w=\sm{\rho&0\\0&0}$.
This approach covers both the Krein string ($\chi=0$ and $\varsigma=1$) as well as the $\delta'$-interaction ($\chi=0$, $\varsigma=1+\beta\delta_0$, and $\rho=1$ in the simplest case).

Another strand of history begins with Chebyshev \cite{Chebyshev1873} in 1873 who raised a question later known as a moment problem.
This line of investigation led to inquiries on difference equations and their spectral theory and in particular to the study of orthogonal polynomials.
In his 1964 book \cite{MR0176141} Atkinson emphasizes the many similarities between differential and difference equations.
He recognizes that, in his words, ``neither the difference equation nor the differential equation provides a fully adequate framework for the topic of boundary value problems''.
Indeed, in the latter part of the book Atkinson lays the foundation for a unified approach by writing the differential/difference equation as a system of integral equation where integrals are to be viewed as matrix-valued Riemann-Stieltjes integrals.

It is the goal of the present paper to realize the plan envisioned by Atkinson employing Lebesgue-Stieltjes measures.
As explained already by Atkinson the admission of measures with discrete components as coefficients in the differential equation causes new problems, namely that it is not clear how to continue a solution when one reaches a point carrying mass unless one restricts oneself to a particular kind of functions of locally bounded variation, say left-continuous or right-continuous ones.
Consider, as a paradigm, the equation $u'=R\delta_0 u$ where $R$ is a fixed matrix in $\bb C^{n\times n}$.
Atkinson requires $R^2=0$ so that $(\id\pm R)^{-1}=\id\mp R$.
Then it does not matter whether one looks for left- or right-continuous solutions of initial value problems (except in the point $0$ itself).
In \cite{MR3095152} Eckhardt and Teschl require that the support of the discrete part of $\varsigma$ does not intersect the corresponding sets for $\chi$ or $\rho$.
This is precisely Atkinson's condition.

We are able to relax Atkinson's condition by considering balanced\footnote{We call a function of locally bounded variation balanced if it is the average of the corresponding left- and right-continuous variants.} solutions of our differential equations.
The reason for this is the integration by parts formula for functions of locally bounded variation which takes its usual form only when the jumps of the two functions under consideration do not coincide (essentially Atkinson's condition) or else when they are both balanced. This is explained in detail in Section \ref{sde.1}.
Nevertheless, the existence and uniqueness theorem for solutions of initial value problems of the equation $Ju'+qu=w(\la u+f)$ may fail for $\la\in\Lambda$, a set we require to be at most countable.

Even in the case of constant coefficients the treatment of the equation $Ju'+qu=wf$ may offer two difficulties which do not occur for a Sturm-Liouville equation.

The first is that the equation may not give rise to a linear operator but only to a linear relation.
A spectral theory for linear relations was first developed by Arens \cite{MR0123188} and first applied to the system $Ju'+qu=wf$ (with locally integrable coefficients) by Orcutt \cite{Orcutt}.
Langer and Textorius \cite{MR516415,MR751188,MR824211} developed an approach to the spectral theory of relations using Krein's directing functionals.
Bennewitz, in unpublished lecture notes \cite{Bennewitz-sths}, also addressed the subject.
We are, in various places, indebted to his exposition.

The second problem is that there may be non-trivial solutions of $Ju'+qu=0$ which are equivalent to $0$ in the appropriate Hilbert space.
Many people assumed in their work the definiteness condition which posits that this does not happen.
The first to consider the consequences when the definiteness condition is violated was Kac \cite{MR725424,zbMATH02007668} while working on $2\times 2$-systems.
Other contributions in this direction are due to Kogan and Rofe-Beketov \cite{MR0454141} and Lesch and Malamud \cite{MR1964480} who investigate the deficiency indices of the minimal operators relations associated with the equation $Ju'+qu=wf$.

Various aspects of systems of first-order differential equations with locally integrable coefficients and their spectral theory have been investigated by many people (with or without employing relations or the stipulation of the definiteness condition).
It is impossible to give an adequate overview in a few lines but we mention here papers by Binding and Volkmer \cite{MR1824087}, Dijksma, Langer, and de Snoo \cite{MR1017665,MR1251013}, Hinton and Shaw \cite{MR621248,MR688797,MR644777}, Mogileveskii \cite{MR2988011,MR3310511,MR3355783,MR3430753}, and Volkmer \cite{MR2135259} as well as the books by Arov and Dym \cite{MR2985757}, Gohberg and Krein \cite{MR0264447}, Sakhnovich, Sakhnovich, and Roitberg \cite{MR3098432}, and Weidmann \cite{MR923320}.

\begin{comment}
$(a,b)=(0,1)$, $n=2$ and $w=\sm{1&0\\ 0&0}$
\begin{enumerate}
  \item $J=\sm{0&1\\ -1&0}$ and $q=\sm{0&0\\ 0&-1}$. In this case $T_\mx$ is an operator and the definiteness condition holds.
  \item $J=\sm{0&1\\ -1&0}$ and $q=\sm{0&0\\ 0&0}$. In this case $T_\mx$ is not an operator and the definiteness condition does not hold.
  \item $J=i \id_2$ and $q=\sm{0&0\\ 0&0}$. In this case $T_\mx$ is an operator but the definiteness condition does not hold.
  \item $J=i \id_2$ and $q=\sm{0&1\\ 1&0}$. In this case $T_\mx$ is not an operator but the definiteness condition holds.
\end{enumerate}
\end{comment}

We close this introduction with several comments about our notation.
We denote identity operators by $\id$.
The transpose and conjugate transpose of $A\in\bb C^{m\times n}$, a matrix with $m$ rows and $n$ columns, are denoted by $A^\top$ and $A^*$, respectively.
We think of vectors in $\bb C^n=\bb C^{n\times1}$ as columns so that $x^*y$ is the scalar product in $\bb C^n$.
This scalar product is linear in the second entry, a convention which will be in force for all scalar products (generally denoted by $\<\cdot,\cdot\>$) occurring in this paper.
The function $A\mapsto |A|_1=\sum_{j=1}^m\sum_{k=1}^n |A_{j,k}|$ is a norm on the space of $m\times n$-matrices which we will sometimes use.
If $\mc H$ is a Hilbert space and $A=(A_1,...,A_\ell)\in\mc H^{1\times\ell}$ and $B=(B_1, ..., B_m)\in\mc H^{1\times m}$ we define $\<A,B\>\in \bb C^{\ell\times m}$ by $\<A,B\>_{j,k}=\<A_j,B_k\>$.
In particular, if $m=1$, then $\<A,B\>$ is a column in $\bb C^\ell$.
The direct sum of two subspaces $S$ and $T$ of a given Hilbert space $\mc H$ with trivial intersection is denoted by $S\dot+T$.
When we write $S\oplus T$ instead of $S\dot+T$, we assume that $S$ and $T$ are closed and orthogonal to each other.
The orthogonal complement of a subset $T$ of $\mc H$ is denoted by $T^\perp$ or $\mc H\ominus T$.
The characteristic function of a set $Y$ is denoted by $\chi_Y$.
We also use, on occasion, the $\sgn$ function which is $-1$ on the negative real axis, $0$ at zero, and $+1$ on the positive real axis.

\section{Differential equations with distributional coefficients}\label{sde}
\subsection{Existence and uniqueness} \label{sde.1}
In this section we investigate existence and uniqueness of solutions of first-order systems whose coefficients are distributions of order $0$.
Suppose $r\in \mc D^{\prime0}(\iOmega)^{n\times n}$ and $g\in\mc D^{\prime0}(\iOmega)^{n}$.
If the components of $u:\iOmega\to\bb C^n$ are of locally bounded variation, then the components of $u'$ and, using the definition made in equation \eqref{170414.1}, the components of $ru$ are distributions of order $0$.
Thus we may state the equation
$$u'=ru+g$$
seeking solutions in $\BVl(\iOmega)^n$.

There is some freedom in defining functions of bounded variation at points where they are discontinuous.
If $u\in\BVl(\iOmega)$, it is common to single out its left-continuous version $u^-$ and its right-continuous version $u^+$.
However, we will be particularly interested in {\em balanced}\index{balanced} functions, functions whose values are the average of left- and right-hand limits.
These will be denoted by $u^\#=(u^++u^-)/2$.
Our motivation for this is the resulting integration by parts formula and, of course, the fact that integration by parts will be a central tool later on.
Assume that $u,v\in\BVl(\iOmega)$ satisfy $u=t u^++(1-t)u^-$ and $v=t v^++(1-t)v^-$ for some fixed parameter $t$
(here $t=0$ yields left-continuous functions while $t=1$ yields right-continuous ones).
According to Lemma \ref{ibyp}
\begin{equation}\label{ibypt}
\int_{[c,d]} (udv+vdu)=(uv)^+(d)-(uv)^-(c)+(2t-1)\int_{[c,d]}(v^+-v^-)du
\end{equation}
whenever $[c,d]\subset\iOmega$.
We see from this that the choice of $t$ is irrelevant unless points of discontinuity of $u$ and $v$ coincide.
Since we do not want to rule out this possibility, choosing $t=1/2$ ensures that $\int_{[c,d]} (udv+vdu)$ depends only on the behavior of $u$ and $v$ near $c$ and $d$.

For any distribution $r\in \mc D^{\prime0}(\iOmega)^{m\times n}$ we define the function $\Delta_r:\iOmega\to \bb C^{m\times n}$ by
$$\Delta_r(x)=R^+(x)-R^-(x)$$
when $R$ is an antiderivative of $r$.
Of course, $\Delta_r(x)=0$ except on a countable set.

The existence and uniqueness theorem (Theorem \ref{EUIVP} below), which is at the base of our work, is due to \cite{BBW}.
It relies on the following result by Bennewitz \cite{MR1004432}.
\begin{thm}\label{BEU}
Let $x_0$ be a point in $\iOmega$.
Then the initial value problem $u'=ru+g$, $u(x_0)=u_0\in\bb C^n$ has a unique left-continuous solution $u\in\BVl([x_0,b))^n$.
\end{thm}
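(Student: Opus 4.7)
My plan is to recast the initial value problem as a Stieltjes--Volterra integral equation and solve it by a Banach fixed-point argument, then extend globally by continuation.

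Let $R$ and $G$ denote left-continuous antiderivatives of $r$ and $g$, so that $R^-=R$ and $G^-=G$. A left-continuous $u\in\BVl([x_0,b))^n$ satisfies $u'=ru+g$ with $u(x_0)=u_0$ if and only if
$$u(x)=u_0+\int_{[x_0,x)}dR(y)\,u(y)+G(x)-G(x_0),\qquad x\in[x_0,b),$$
where the integrand $u(y)$ is the left-continuous representative. This equivalence follows from unwinding the definition of the distributional derivative together with the product $ru$ given by \eqref{170414.1}.

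For local existence on a small interval $[x_0,x_0+\epsilon]$ I would work in the affine complete metric space
$$X_\epsilon=\{u:[x_0,x_0+\epsilon]\to\bb C^n\text{ left-continuous of bounded variation with }u(x_0)=u_0\}$$
equipped with the supremum norm, and define $T\colon X_\epsilon\to X_\epsilon$ by the right-hand side of the integral equation above. The crucial estimate is
$$\|Tu-Tv\|_\infty\le |R|\big((x_0,x_0+\epsilon)\big)\,\|u-v\|_\infty,$$
where the atom of $R$ at $x_0$ is absent from the bound precisely because $u(x_0)-v(x_0)=0$ for $u,v\in X_\epsilon$. Since $|R|$ is a finite Borel measure on any compact subinterval of $\iOmega$, the quantity $|R|((x_0,x_0+\epsilon))$ tends to $0$ as $\epsilon\to 0^+$, so for sufficiently small $\epsilon$ the map $T$ is a strict contraction and the Banach fixed-point theorem furnishes a unique solution in $X_\epsilon$.

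For the global statement on $[x_0,b)$, I would consider the set $S$ of points $x_1\in[x_0,b)$ such that a unique left-continuous solution exists on $[x_0,x_1]$. Then $S$ is nonempty and clearly an interval; it is open to the right by restarting the contraction argument at $x_1\in S$, where the correct new initial datum immediately to the right of $x_1$ is forced by the jump relation $u^+(x_1)=(\id+\Delta_R(x_1))u^-(x_1)+\Delta_G(x_1)$; and it is closed in $[x_0,b)$ because a left-continuous solution on every $[x_0,x_1-1/k]$ produces via local boundedness of $|R|$ a left-continuous limit at $x_1$ that still satisfies the integral equation there. Hence $S=[x_0,b)$. Global uniqueness follows from local uniqueness by iteration, or equivalently from a Stieltjes--Gronwall inequality applied to $w=u_1-u_2$.

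The main obstacle is the interplay between atoms of $R$ and the contraction estimate: a crude bound yields $|R|([x_0,x_0+\epsilon])$, which need not be small when $R$ carries mass at $x_0$. The argument is rescued by exploiting the prescribed initial value together with left-continuity: the atom at $x_0$ contributes identically to $Tu$ and $Tv$ and therefore drops from $Tu-Tv$. This is also exactly the place where the choice of the left-continuous convention (rather than a more symmetric one such as the balanced representative) is essential.
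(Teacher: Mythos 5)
The paper does not actually prove Theorem~\ref{BEU}: it is imported verbatim from Bennewitz \cite{MR1004432}, so there is no in-paper argument to compare yours against. Your strategy --- recast the problem as the Stieltjes--Volterra equation $u(x)=u_0+\int_{[x_0,x)}dR\,u+G(x)-G(x_0)$, contract locally, continue globally --- is the standard and correct route to this result, and you have correctly identified the one genuinely delicate point: the atom of $R$ at the left endpoint of each step drops out of $Tu-Tv$ because the initial value is prescribed there, which is exactly why left-continuous solutions propagate to the \emph{right} with no invertibility hypothesis on $\id+\Delta_r$ (in contrast to Theorem~\ref{EUIVP}, which needs $\id\pm\Delta_r/2$ invertible for balanced solutions). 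The jump relation $u^+(x_1)=(\id+\Delta_R(x_1))u^-(x_1)+\Delta_G(x_1)$ you write down is also correct, though for the restart it is cleaner to simply pose the same integral equation on $[x_1,x_1+\epsilon)$ with datum $u(x_1)=u^-(x_1)$; the atom at $x_1$ is then absorbed exactly as the one at $x_0$ was.

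Two technical defects need repair before this is a complete proof. First, the space $X_\epsilon$ of left-continuous functions of bounded variation with the supremum norm is \emph{not} complete: a uniform limit of $\BV$ functions need not be of bounded variation, so the Banach fixed-point theorem does not apply as stated. The fix is routine --- work instead in the (complete) space of bounded left-continuous functions with $u(x_0)=u_0$, observe that $T$ maps bounded Borel functions into $\BV$ functions because $x\mapsto\int_{[x_0,x)}dR\,u$ has variation at most $\|u\|_\infty\Var_R$, and conclude that the fixed point is automatically in $\BVl$. (Alternatively, run explicit successive approximations with the iterated-integral estimate, which is essentially what Bennewitz does.) Second, in the closedness half of the continuation argument you need the solution to be \emph{bounded} on $[x_0,x_1)$ before you can pass to the limit in the integral equation and obtain a left limit at $x_1$; boundedness on each $[x_0,x_1-1/k]$ is not a priori uniform in $k$. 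This is supplied by the Stieltjes--Gronwall inequality, $|u(x)|_1\le\big(|u_0|_1+\Var_G([x_0,x_1])\big)\exp\big(\Var_R([x_0,x_1])\big)$, which you invoke only for uniqueness; it should be invoked here as well. With these two repairs the argument is sound.
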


Bennewitz's proof can easily be adapted to show existence and uniqueness of a right-continuous solution of the initial value problem $u'=ru+g$, $u(x_0)=u_0\in\bb C^n$ in $\BVl((a,x_0])^n$.

\begin{thm}\label{EUIVP}
Suppose $r\in \mc D^{\prime0}(\iOmega)^{n\times n}$, $g\in\mc D^{\prime0}(\iOmega)^{n}$ and that the matrices $\id\pm\Delta_r(x)/2$ are invertible for all $x\in\iOmega$.
Let $x_0$ be a point in $\iOmega$.
Then the initial value problem $u'=ru+g$, $u(x_0)=u_0\in\bb C^n$ has a unique balanced solution $u\in\BVl^\#(\iOmega)^n$.
\end{thm}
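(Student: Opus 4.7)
The plan is to reduce Theorem~\ref{EUIVP} to Bennewitz's Theorem~\ref{BEU} by replacing the coefficients $(r,g)$ with modified ones $(\tilde r,\tilde g)$ so that the unique left-continuous Bennewitz solution of the modified equation, reconstituted as a balanced function, solves the original equation. The invertibility hypotheses are exactly what allows this replacement: $\id-\tfrac12\Delta_r$ handles forward continuation from $x_0$ on $[x_0,b)$, while $\id+\tfrac12\Delta_r$ handles the backward half $(a,x_0]$ via the right-continuous variant of Bennewitz's theorem mentioned just below the excerpt.

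At every jump $x$ a distributional solution must satisfy
\[
u^+(x)-u^-(x)=\Delta_r(x)u^\#(x)+\Delta_g(x),
\]
and, since $u$ is balanced, $u^\#=(u^++u^-)/2$. Rearranging gives
\[
(\id-\tfrac12\Delta_r(x))u^+(x)=(\id+\tfrac12\Delta_r(x))u^-(x)+\Delta_g(x),
\]
and invertibility of $\id-\tfrac12\Delta_r(x)$ lets one solve for the jump $\Delta_v(x)$ of the left-continuous version $v=u^-$, yielding $\Delta_v(x)=(\id-\tfrac12\Delta_r(x))^{-1}\bigl(\Delta_r(x)v^-(x)+\Delta_g(x)\bigr)$. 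I would therefore define $\tilde r,\tilde g\in\mc D^{\prime0}(\iOmega)$ having the same continuous parts as $r,g$ and atomic parts $\Delta_{\tilde r}(x)=(\id-\tfrac12\Delta_r(x))^{-1}\Delta_r(x)$ and $\Delta_{\tilde g}(x)=(\id-\tfrac12\Delta_r(x))^{-1}\Delta_g(x)$. A direct check using~\eqref{170414.1} shows that a left-continuous $v\in\BVl([x_0,b))^n$ satisfies $v'=\tilde rv+\tilde g$ if and only if the balanced $u=v+\Delta_v/2$ satisfies $u'=ru+g$. Theorem~\ref{BEU} applied to the modified equation with initial value $v(x_0)=u_0-\tfrac12(\Delta_r(x_0)u_0+\Delta_g(x_0))$—the value of $u^-(x_0)$ forced by $u^\#(x_0)=u_0$ and the jump relation at $x_0$—then provides a unique such $v$, hence a unique balanced $u$ on $[x_0,b)$ with $u(x_0)=u_0$. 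The symmetric construction on $(a,x_0]$ gives the corresponding statement there, and the two pieces agree at $x_0$ by design.

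The principal technical point is that $\tilde r$ and $\tilde g$ really belong to $\mc D^{\prime0}(\iOmega)$; equivalently, their atomic parts must be locally summable. On any compact $K\subset\iOmega$ only finitely many points $x$ can have $|\Delta_r(x)|$ above a fixed small threshold, and at every other jump the Neumann series gives $(\id-\tfrac12\Delta_r(x))^{-1}=\id+O(\Delta_r(x))$; hence $|\Delta_{\tilde r}(x)|$ and $|\Delta_{\tilde g}(x)|$ are controlled on $K$ by constant multiples of $|\Delta_r(x)|$ and $|\Delta_g(x)|+|\Delta_r(x)|\,|\Delta_g(x)|$ respectively, both of which are summable over $K$. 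The remaining verification—that the product in~\eqref{170414.1} really reproduces the jump formula used above for both balanced and left-continuous arguments—is precisely where the choice $t=1/2$ in the integration-by-parts identity~\eqref{ibypt} pays off.
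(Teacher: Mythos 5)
Your proposal is correct and follows essentially the same route as the paper: reduce to Bennewitz's Theorem~\ref{BEU} on each half-interval by passing to modified coefficients whose atomic parts are $(\id\mp\tfrac12\Delta_r)^{-1}\Delta_r$ and $(\id\mp\tfrac12\Delta_r)^{-1}\Delta_g$ with the correspondingly adjusted initial value, which (since $\Delta_r$ commutes with $(\id\mp\tfrac12\Delta_r)^{-1}$) is exactly the paper's $\tilde r=r(\id\mp\Delta_r/2)^{-1}$, $\tilde g=(\id\mp\Delta_r/2)^{-1}g$, $\tilde u_0=(\id\mp\Delta_r(x_0)/2)u_0\mp\Delta_g(x_0)/2$. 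Your summability argument for the atomic parts is the content of Theorem~\ref{T:A.4}, which the paper cites for the same purpose.
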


\begin{proof}
Define $\tilde r=r(\id-\Delta_r/2)^{-1}$, $\tilde g=(\id-\Delta_r/2)^{-1}g$, and $\tilde u_0=(\id-\Delta_r(x_0)/2)u_0-\Delta_g(x_0)/2$.
Note that $\tilde r$ and $\tilde g$ are again distributions of order $0$ according to Theorem \ref{T:A.4}.
Then, with the aid of Theorem \ref{BEU}, we obtain a unique left-continuous solution $u_r$ of the initial value problem $u'=\tilde r u+\tilde g$, $u(x_0)=\tilde u_0$ on the interval $[x_0,b)$.
Similarly, we obtain a right-continuous solution $u_\ell$ of another modified problem on the interval $(a,x_0]$.
This time we choose $\tilde r=r(\id+\Delta_r/2)^{-1}$, $\tilde g=(\id+\Delta_r/2)^{-1}g$, and $\tilde u_0=(\id+\Delta_r(x_0)/2)u_0+\Delta_g(x_0)/2$.
Defining $u$ as $u_r^\#$ on $[x_0,b)$ and as $u_\ell^\#$ on $(a,x_0]$ gives then the desired balanced solution of the initial value problem $u'=ru+g$, $u(x_0)=u_0$.

We complete the proof by providing some of the details showing that $u_r^\#$ is a balanced solution of $u'=ru+g$, $u(x_0)=u_0$ on $[x_0,b)$
while we skip the details for the corresponding claim on $u_\ell$.
To emphasize that $u_r$ is left-continuous we will write $u_r^-$ for $u_r$ in the sequel.
Since $u_r^{-\prime}-\tilde ru_r^--\tilde g$ is the zero measure on $[x_0,b)$, we obtain $0$ upon computing the measure of any singleton $\{x\}$.
This and the identity
\begin{equation}\label{170410.1}
\id+\frac12\Delta_r(x)(\id-\frac12\Delta_r(x)\big)^{-1}=\big(\id-\frac12\Delta_r(x)\big)^{-1}
\end{equation}
imply
$$u_r^\#(x)=\big(\id-\frac12\Delta_r(x)\big)^{-1}\big(u_r^-(x)+\frac12\Delta_g(x)\big)$$
which shows, for one thing, that $u_r^\#(x_0)=u_0$.
Since the functions $u_r^\#$ and $u_r^-$ are equal away from a countable set, we have $u_r^{\#\prime}=u_r^{-\prime}$.
Also note that
\begin{equation}\label{170410.2}
r\big(\id-\frac12\Delta_r\big)^{-1}\Delta_g=\Delta_r\big(\id-\frac12\Delta_r\big)^{-1}g.
\end{equation}
Thus
\begin{multline*}
u_r^{\#\prime}-ru_r^\# -g=u_r^{-\prime}-r(\id-\Delta_r/2)^{-1}(u_r^-+\Delta_g/2)-g=u_r^{-\prime}-\tilde ru_r^--\tilde g=0.
\end{multline*}
This completes our proof.
\end{proof}

\begin{remark}
A similar proof shows that unique left-continuous (or a right-contin\-uous) solutions for initial value problems exist on all of $(a,b)$ provided that $\id+\Delta_r(x)$ (or $\id-\Delta_r(x)$) is always invertible.
Since Atkinson's condition $\Delta_r^2=0$ implies $(\id+\Delta_r)(\id-\Delta_r)=\id$ we obtain in this case the existence and uniqueness of both a left- and right-continuous solution.
In fact, these solutions agree away from their points of discontinuity.
We emphasize that Schwabik et al. \cite{MR542283} have a similar result (Theorem III.3.1) in terms of Perron-Stieltjes integrals where they also require that the matrices $\id\pm\Delta_r(x)$ are invertible for all $x\in(a,b)$.
\end{remark}

\begin{remark} \label{R:2.4}
If, for some $c\in\iOmega$, the left-continuous antiderivatives $R$ and $G$ of $r$ and $g$ are of bounded variation on $(a,c)$ and $u'=ru+g$, then $u$ is of bounded variation on $(a,c)$.
Thus $u$ has a limit at $a$ and one may solve the initial value problem with the initial condition posed at $a$; even if $a=-\infty$.
Corresponding statements hold for $b$.
\end{remark}

The following simple example is perhaps instructive.
Let $\iOmega=\bb R$, $n=1$, $r=\alpha\delta_0$ and $g=0$.
Any solution of the differential equation must be constant to the left and right of $0$.
Denoting these constants by $u_\ell$ and $u_r$, respectively, we get $u_r-u_\ell=\alpha u(0)$.
Now suppose we have an initial value $u_0$ at $x_0<0$.
While, in this case, the left-continuous, the right-continuous, and the balanced solution on $(-\infty,0)$ are all equal to the constant $u_0$, we get on the interval $(0,\infty)$ that $u_r=(1+\alpha)u_0$, $u_r=u_0/(1-\alpha)$, and $u_r=(2+\alpha)u_0/(2-\alpha)$, respectively.
In accordance with Theorem \ref{BEU}, a left-continuous solution exists regardless of the value of $\alpha$.
However, if $\alpha=1$ and $u_0\neq0$, then there is no right-continuous solution.
If $\alpha=1$ and $u_0=0$, we may choose anything for $u_r$ to obtain a right-continuous solution.
Similar comments hold for $\alpha=2$ and balanced solutions.
Note that Atkinson's condition can never be satisfied when $n=1$ and $R$ has discontinuities.

Since linear combinations of balanced solutions are again balanced solutions and since the initial value $u_0$ may be chosen freely in an $n$-dimensional space we get the following corollary.
\begin{cor} \label{C4.1.4}
Let $r$ be as in the previous theorem, in particular, assume $\id\pm\Delta_r/2$ always invertible.
Then the set of balanced solutions to the homogeneous equation $u'=ru$ in $\iOmega$ is an $n$-dimensional vector space.
\end{cor}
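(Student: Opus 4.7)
The plan is to set up an explicit linear isomorphism between $\bb C^n$ and the space $V$ of balanced solutions of $u'=ru$ on $\iOmega$, using Theorem \ref{EUIVP}. Fix any base point $x_0\in\iOmega$ and define the evaluation map $E:V\to\bb C^n$ by $Eu=u(x_0)$. Linearity of $E$ is immediate. The content of the corollary is that $E$ is a bijection.

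For surjectivity, I would take an arbitrary $u_0\in\bb C^n$ and invoke Theorem \ref{EUIVP} with $g=0$ to obtain a balanced solution $u\in\BVl^\#(\iOmega)^n$ of $u'=ru$ with $u(x_0)=u_0$; the hypothesis that $\id\pm\Delta_r/2$ is invertible on $\iOmega$ is exactly what Theorem \ref{EUIVP} requires. For injectivity, suppose $u\in V$ satisfies $u(x_0)=0$. Then $u$ is a balanced solution of the initial value problem with zero initial data, but the zero function is evidently also a balanced solution of that same initial value problem (it trivially solves $u'=ru$ and is its own balanced version). The uniqueness clause of Theorem \ref{EUIVP} forces $u=0$ on $\iOmega$, so $E$ is injective.

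It remains only to observe that $V$ is itself a vector space, so that it makes sense to call $E$ linear and conclude $\dim V=\dim\bb C^n=n$. This is true because the equation $u'=ru$ is linear and homogeneous in $u$, and the property of being balanced is preserved under pointwise linear combinations: if $u,v\in\BVl^\#(\iOmega)^n$ and $\alpha,\beta\in\bb C$, then $(\alpha u+\beta v)^{\pm}=\alpha u^{\pm}+\beta v^{\pm}$, whence $(\alpha u+\beta v)^{\#}=\alpha u+\beta v$. Thus $V$ is a subspace of $\BVl^\#(\iOmega)^n$, and the bijection $E$ gives $\dim V=n$.

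There is no real obstacle here; the only point requiring mild care is the verification that the balanced property is linear, which is what allows us to use the uniqueness statement of Theorem \ref{EUIVP} to deduce both injectivity and the well-definedness of $E$ on a genuine vector space. Everything else is a direct packaging of the existence and uniqueness theorem.
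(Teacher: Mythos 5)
Your proposal is correct and follows essentially the same route as the paper, which derives the corollary in one line from the observation that linear combinations of balanced solutions are again balanced solutions and that the initial value at $x_0$ ranges freely over $\bb C^n$ by Theorem \ref{EUIVP}. Your version merely makes the implicit isomorphism $u\mapsto u(x_0)$ and the use of the uniqueness clause explicit.
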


Note, however, that the stipulation on the matrices $\id\pm\Delta_r/2$ is important.
Indeed, if $\iOmega=(0,3)$, $n=1$ and $r=-2\delta_1+2\delta_2$, then any function $u$ which is constant on both $(0,1)$ and $(2,3)$ and zero on $(1,2)$ provides a balanced solution of $u'=ru$.
Thus, in this case, the space of solutions is two-dimensional.

\subsection{Variation of constants}
If $u_h$ and $u_p$ are balanced solutions of $u'=ru$ and $u'=ru+g$, respectively, then $u_h+u_p$ is also a balanced solutions of $u'=ru+g$.
Moreover, if $v$ is any balanced solution of $u'=ru+g$, then there is a balanced solution $u_h$ of $u'=ru$ such that $v=u_h+u_p$.
Thus, Corollary \ref{C4.1.4} describes implicitly also the manifold of solutions of the inhomogeneous equation $u'=ru+g$.
The following lemma is a generalization of the variation of constants formula, which provides a particular solution of the inhomogeneous equation, namely the one with zero initial conditions.
A {\em fundamental matrix}\index{fundamental matrix} $U$ for $u'=ru$ is an element of $\BVl^\#(\iOmega)^{n\times n}$ such that each column is a balanced solution of $u'=ru$ and $\det U(x)\neq0$ for some fixed $x\in\iOmega$.
Note that
\begin{equation}\label{170616.1}
U^\pm=(\id\pm\frac12\Delta_r)U
\end{equation}
and, defining the function $H$,
\begin{equation}\label{170701.1}
H=(\id-\frac12\Delta_r)U^+=(\id+\frac12\Delta_r)U^-.
\end{equation}
\begin{lem}\label{varconst}
Let $r$, $g$ and $x_0$ be as in Theorem \ref{EUIVP}, in particular, assume $\id\pm\Delta_r/2$ always invertible.
If $U$ is a fundamental matrix for $u'=ru$, then $\det U(x)$, $\det U^-(x)$, and $\det U^+(x)$ are different from zero for all $x$ in $\iOmega$.

Suppose $u^\pm$ are given by
\begin{equation}\label{150811.1}
u^-(x)=U^-(x)\big(-u_0^-+\int_{[x_0,x)} H^{-1}g\big), \quad x\geq x_0
\end{equation}
and
\begin{equation}\label{150811.2}
u^+(x)=U^+(x)\big(u_0^+-\int_{(x,x_0]} H^{-1}g\big), \quad x\leq x_0
\end{equation}
where $2u_0^{\pm}=U^\pm(x_0)^{-1} \Delta_g(x_0)$.
Then $u^\#$ is a balanced solution of the initial value problem $u'=ru+g$, $u(x_0)=0$.
Conversely, given a balanced solution $u$ of this initial value problem, formulas \eqref{150811.1} and \eqref{150811.2} hold.
\end{lem}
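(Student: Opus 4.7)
The plan has three ingredients. First I would establish the nonvanishing of $\det U$, $\det U^-$, and $\det U^+$ by a Wronskian-style argument. If $U(x_1)c=0$ for some $x_1\in\iOmega$ and some $c\in\bb C^n$, then $Uc$ is a balanced solution of $u'=ru$ vanishing at $x_1$, so Theorem \ref{EUIVP} forces $Uc\equiv 0$, and then $c=0$ because $\det U$ is nonzero somewhere by hypothesis. Hence $\det U$ is nonzero throughout $\iOmega$, and the identities \eqref{170616.1} together with the standing invertibility of $\id\pm\Delta_r/2$ transfer this to $U^-$ and $U^+$.

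For the variation of constants formulas I would treat $[x_0,b)$ and $(a,x_0]$ separately; the two halves being symmetric, I describe only the right one. The cleanest strategy is to tie the computation directly to the construction used in the proof of Theorem \ref{EUIVP}. That proof shows that on $[x_0,b)$ the balanced solution with $u(x_0)=0$ is $u_r^\#$, where $u_r^-$ is the unique left-continuous solution of the modified initial value problem $u^{-\prime}=\tilde r u^-+\tilde g$, $u^-(x_0)=-\Delta_g(x_0)/2$, with $\tilde r=r(\id-\Delta_r/2)^{-1}$ and $\tilde g=(\id-\Delta_r/2)^{-1}g$. It therefore suffices to verify that the function $v$ defined by the right-hand side of \eqref{150811.1} coincides with $u_r^-$. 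At $x=x_0$ the integral is empty, and the definition $2u_0^-=U^-(x_0)^{-1}\Delta_g(x_0)$ yields $v(x_0)=-\Delta_g(x_0)/2$; moreover, $(U^-)'=U'=rU=\tilde r U^-$, so $U^-$ is a left-continuous fundamental matrix for the modified homogeneous equation. Writing $v=U^-y$ with the left-continuous factor $y(x)=-u_0^-+\int_{[x_0,x)}H^{-1}g$, the product rule $d(fg)=f\,dg+g^+\,df$ for left-continuous functions of bounded variation — which is precisely \eqref{ibypt} at $t=0$ — gives $v'=\tilde r U^-y^++U^-H^{-1}g$. The jump identity $y^+-y^-=H^{-1}\Delta_g$, combined with $U^+H^{-1}=(\id-\Delta_r/2)^{-1}$ extracted from \eqref{170701.1} and with the relations \eqref{170410.1}--\eqref{170410.2} used in the proof of Theorem \ref{EUIVP}, then collapses the discrepancy to $\tilde g$, so $v'=\tilde r v+\tilde g$. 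By Theorem \ref{BEU} we conclude $v=u_r^-$, which is exactly \eqref{150811.1}; a symmetric argument on $(a,x_0]$ produces \eqref{150811.2}.

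The converse is immediate from uniqueness: given any balanced solution $u$ of the initial value problem, Theorem \ref{EUIVP} forces it to coincide with the $u^\#$ just constructed, and passing to its one-sided continuous versions on the two halves of $\iOmega$ recovers \eqref{150811.1} and \eqref{150811.2}. I expect the principal obstacle to lie in the atomic bookkeeping of the second step: matching the correction term $\tilde r U^-(y^+-y^-)$ produced by the product rule against $\tilde g$ atom by atom requires the careful reuse of \eqref{170410.1}--\eqref{170410.2} together with the factorization of $H$ from \eqref{170701.1}; once this bookkeeping is carried out, the remaining steps are formal.
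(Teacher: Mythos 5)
Your proposal is correct and follows essentially the same route as the paper: the nonvanishing of the determinants via uniqueness of balanced solutions together with \eqref{170616.1}, then the verification that the right-hand side of \eqref{150811.1} solves the modified problem $u'=\tilde r u+\tilde g$, $u(x_0)=\tilde u_0$ from the proof of Theorem \ref{EUIVP}, with the converse coming from uniqueness. One small simplification: applying the product rule in the paper's form \eqref{170410.3}, $(FG)'=F^+G'+F'G^-$, to $U^-y$ yields $v'=U^+H^{-1}g+\tilde r U^-y^-=\tilde g+\tilde r v$ immediately via \eqref{170701.1}, so the atom-by-atom bookkeeping you anticipate as the principal obstacle is avoidable.
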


\begin{proof}
If there is a non-trivial $C\in \bb C^n$ such that $U(x_1)C=0$, then $UC$ is a balanced solution of $u'=ru$ which vanishes in $x_1$ and must therefore vanish identically.
But this is impossible proving that $\det U$ is never $0$.
Equation \eqref{170616.1} shows that both, $U^+$ and $U^-$, also have non-vanishing determinants.

Differentiating formula \eqref{150811.1} using the product rule \eqref{170410.3} and the identities \eqref{170410.1} and \eqref{170410.2} shows that $u^-$ satisfies the initial value problem $u'=\tilde ru+\tilde g$, $u(x_0)=\tilde u_0$ where $\tilde r$, $\tilde g$, and $\tilde u_0$ are those given in the proof of Theorem \ref{EUIVP}.
Hence $u^\#$ solves $u'=ru+g$, $u(x_0)=0$.
\end{proof}

\subsection{Dependence of the coefficients on a parameter}\label{sde.3}
We now consider the case where $r$, $g$, and $u_0$ depend analytically\footnote{See Appendix \ref{ADM.4} for the definition of \textit{analytic} in the context of distributions.} on a parameter.
Derivatives with respect to $\la$ are denoted by a dot set above the symbol representing the function in question.

\begin{thm}\label{T:2.3.1}
Let $\Omega$ be an open set in $\bb C$.
Suppose that $u_0:\Omega\to\bb C^n$, $r:\Omega\to\mc D^{\prime0}((a,b))^{n\times n}$, and $g:\Omega\to\mc D^{\prime0}((a,b))^n$ are analytic in $\Omega$.
Furthermore, assume that $\phi_\pm(x,\la)=\id\pm \Delta_{r(\la)}(x)/2$ are invertible whenever $x\in(a,b)$ and $\la\in\Omega$.
Let $u(.,\lambda)\in \BVl((a,b))^{n}$ be the unique balanced solution for the initial value problem $u'=r(\lambda)u+g(\lambda)$, $u(x_0,\lambda)=u_0(\lambda)$.
Then $u(x,.)$ is analytic in $\Omega$ for each $x\in(a,b)$.
\end{thm}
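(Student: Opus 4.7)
The plan is to reduce to the setting of Theorem \ref{BEU} (Bennewitz's left-continuous existence result) by the same transformation used in the proof of Theorem \ref{EUIVP}, and there to obtain analyticity by a Picard iteration whose iterates are manifestly analytic in $\la$. Throughout I would use the characterization of analyticity as local boundedness together with analyticity of all scalar evaluations, so that uniform-on-compacts limits of analytic families remain analytic.

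First I would check that the modification $(r(\la), g(\la), u_0(\la)) \mapsto (\tilde r(\la), \tilde g(\la), \tilde u_0(\la))$ from the proof of Theorem \ref{EUIVP} preserves analyticity. The map $s \mapsto \Delta_s(x)$ extracts a jump and depends analytically on $s \in \mc D^{\prime 0}((a,b))^{n\times n}$ by the definition of analyticity recalled in Appendix \ref{ADM.4}; matrix inversion is analytic where defined, which is guaranteed by the hypothesis on $\phi_\pm$; and products of analytic families are analytic. Hence $\tilde r(\la)$, $\tilde g(\la)$, and $\tilde u_0(\la)$ all depend analytically on $\la \in \Omega$.

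Next, fix a compact interval $[x_0, x_1] \subset [x_0, b)$ and a closed disk $D \subset \Omega$, and define the Picard iterates
\[
u^{(0)}(x, \la) = \tilde u_0(\la), \qquad u^{(k+1)}(x, \la) = \tilde u_0(\la) + \int_{[x_0, x)}\bigl(\tilde r(\la) u^{(k)}(\cdot, \la) + \tilde g(\la)\bigr).
\]
Each iterate is a finite iterated Lebesgue-Stieltjes integral of an analytic family in $\la$, and is therefore analytic in $\la \in D$ for every fixed $x$. Local uniform boundedness of the total variation $\int_{[x_0, x_1]} |\tilde r(\la)|_1$ for $\la \in D$ (inherited from analyticity and compactness of $D$) gives via the standard Gronwall estimate a bound of the form $\|u^{(k+1)} - u^{(k)}\|_\infty \leq M C^k/k!$ on $[x_0, x_1] \times D$. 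Hence the iterates converge uniformly on $[x_0, x_1] \times D$ to Bennewitz's left-continuous solution $u_r^-(x, \la)$, which is therefore analytic in $\la$ on $\Omega$ for each fixed $x$.

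Finally, the identity $u_r^\#(x, \la) = (\id - \tfrac12 \Delta_{r(\la)}(x))^{-1}\bigl(u_r^-(x, \la) + \tfrac12 \Delta_{g(\la)}(x)\bigr)$ established in the proof of Theorem \ref{EUIVP} transfers analyticity from $u_r^-$ to the balanced solution on $[x_0, b)$, and the symmetric construction covers $(a, x_0]$. The main technical obstacle is the uniform control of $\int_{[x_0, x_1]} |\tilde r(\la)|_1$ on compact subsets of $\Omega$; this rests on the precise meaning of analyticity for distribution-valued functions set up in Appendix \ref{ADM.4}, specifically the fact that an analytic family of order-zero distributions induces a family of total-variation norms on compact subintervals which is locally bounded in $\la$. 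Once this is in hand, everything else is the now-routine combination of Picard iteration and the Weierstrass theorem on uniform limits of analytic functions.
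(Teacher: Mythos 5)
Your proof is essentially correct but follows a genuinely different route from the paper's. The paper does not reduce to the transformed left-continuous system at all: it guesses the $\la$-derivative directly, namely as the balanced solution $v$ of the variational equation $y'=r(\la_0)y+\dot r(\la_0)u(\cdot,\la_0)+\dot g(\la_0)$ with initial data arranged so that $v^-(x_0)=\dot z(\la_0)$, forms $\check u(\cdot,\la)=(u(\cdot,\la)-u(\cdot,\la_0))/(\la-\la_0)-v$, writes the integral equation it satisfies, and applies a single Gronwall estimate (together with \eqref{180320.1} and Theorem \ref{T:A.4}) to show $\check u^-(x,\la)\to0$. That approach buys two things: an explicit identification of $\partial_\la u$ as the solution of the variational equation, and — because it works with balanced solutions of the original equation, relating $\check u$ to $\check u^-$ through the jump formula — it never has to prove that $\la\mapsto\tilde r(\la)=r(\la)(\id-\Delta_{r(\la)}/2)^{-1}$ is analytic in variation. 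That last point is the least routine step in your argument: it is true, but "products of analytic families are analytic" hides a genuine computation (splitting the difference quotient, using \eqref{180320.1} to get uniform convergence of $\Delta_{r(\la)}(x)$ on compacts, and Theorem \ref{T:A.4}(b) for uniform bounds on the inverses), and you should carry it out. The other place needing care is your induction through the Picard iterates: analyticity of $u^{(k)}(x,\cdot)$ "for every fixed $x$" is not enough to conclude analyticity of $\la\mapsto\int_{[x_0,x)}\tilde r(\la)u^{(k)}(\cdot,\la)$; you need the difference quotients of $u^{(k)}$ to converge uniformly in $x$ on $[x_0,x_1]$ (or in $L^1(dV_{\tilde R(\cdot,\la)})$), so the inductive hypothesis must be strengthened accordingly. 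With those two repairs, the Picard--Weierstrass argument is a valid and rather classical alternative, and it has the mild advantage of giving local uniformity in $x$ of the convergence for free; the paper's variational-equation argument is shorter once the existence theory (Theorem \ref{EUIVP}) is in place and is the one reused implicitly when the derivative of $U(x,\cdot)$ is needed later.
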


\begin{proof}
We begin by pointing out that the analyticity of one of $u^-(x,\cdot)$, $u^+(x,\cdot)$, and $u(x,\cdot)$ implies the analyticity of the others because of the equations
$u^+(x,\la)-u^-(x,\la)=\Delta_{r(\la)}(x)u(x,\la)+\Delta_{g(\la)}(x)$ and $u^+(x,\la)+u^-(x,\la)=2u(x,\la)$.
Therefore the function $z=u^-(x_0,\cdot)$ is analytic.
For a given $\la_0\in\Omega$ we will show that $u^-(x,.)$ is differentiable at $\la_0$ for each $x\in[x_0,b)$.
A similar argument can be used when $x\in(a,x_0]$.

Let $v\in \BVl((a,b))^{n}$ be the balanced solution of the initial value problem
$$y'=r(\lambda_0)y+\dot r(\lambda_0)u(.,\lambda_0)+\dot g(\lambda_0)$$
$$y(x_0)=\phi_-(x_0,\la_0)^{-1}\big(2\dot{z}(\la_0)+\Delta_{\dot{r}(\la_0)}(x_0)u_0(\la_0)+\Delta_{\dot{g}(\la_0)}(x_0)\big)/2.$$
Here the initial condition was chosen so that $v^-(x_0)=\dot{z}(\la_0)$.
For $\la\neq\la_0$ in $\Omega$ let $\check{u}(.,\lambda)=(u(.,\la)-u(.,\la_0))/(\la-\la_0)-v$.
Then $\check{u}(.,\la)\in \BVl((a,b))^{n}$ is a balanced solution of
$$y'=r(\la)y+(r(\la)-r(\la_0))v+\check{r}(\la)u(.,\la_0)+\check{g}(\la),$$
satisfying $\check{u}^-(x_0,\la)=\check{z}(\la)$
when
\begin{multline*}
\check{r}(\la)=\frac{r(\la)-r(\la_0)}{\la-\la_0}-\dot r(\la_0),\; \check{g}(\la)=\frac{g(\la)-g(\la_0)}{\la-\la_0}-\dot g(\la_0), \text{ and }\\
\check z(\la)=\frac{z(\la)-z(\la_0)}{\la-\la_0}-\dot z(\la_0).
\end{multline*}
This implies that for all $s\in[x_0,b)$
$$\check{u}^-(s,\lambda)=\check z(\lambda)+\int_{[x_0,s)}\Big(r(\lambda)\check{u}(.,\lambda)+(r(\lambda)-r(\lambda_0))v+\check{r}(\la)u(.,\la_0)+\check{g}(\la)\Big).$$
We prove now that $\lim_{\lambda\rightarrow\lambda_0}\vert\check{u}^-(x,\la)\vert_1=0$ for any given $x\in(x_0,b)$.
Pick $c\in(x,b)$ and choose an open set $O$ containing $\la_0$ whose closure is contained in $\Omega$.
Choose $M>0$ such that $\vert v(s)\vert_1\leq M$, $\vert u(s,\la_0)\vert_1\leq M$, and, utilizing Theorem \ref{T:A.4}, $\vert\phi_-(s,\la)^{-1}\vert_1\leq M$
for all $s\in[x_0,c]$ and all $\la\in O$.
Then, for all $s\in[x_0,c)$ and all $\la\in O\setminus\{\la_0\}$, we have
\begin{equation}\label{0101011}
\vert\check{u}^-(s,\la)\vert_1\leq\alpha(\la)+ \int_{[x_0,s)}\vert\check{u}(.,\la)\vert_1dV_{R(.,\la)},
\end{equation}
where
$$\alpha(\la)=\vert\check z(\la)\vert_1+M \Var_{R(.,\la)-R(.,\la_0)}([x_0,c])+M\Var_{\check{R}(.,\la)}([x_0,c])+\Var_{\check{G}(.,\la)}([x_0,c])$$
and $R$, $\check R$, and $\check G$ are left-continuous antiderivatives of $r$, $\check r$, and $\check g$, respectively.
From
$$\check{u}(t,\la)=\frac12\phi_-(t,\la)^{-1}\big(2\check{u}^-(t,\la)+\Delta_{r(\la)-r(\la_0)}(t)v(t)+\Delta_{\check{r}(\la)}(t)u(t,\la_0)+\Delta_{\check{g}(\la)}(t)\big)$$
we obtain with help of \eqref{180320.1}
\begin{equation}\label{010101012}
\vert\check{u}(t,\la)\vert_1\leq\beta(\la)+M\vert\check{u}^-(t,\la)\vert_1,
\end{equation}
for all $t\in[x_0,c)$ and all $\la\in O\setminus\{\la_0\}$
when
$$\beta(\la)=\frac M2\big(M\Var_{R(.,\la)-R(.,\la_0)}([x_0,c])+M\Var_{\check{R}(.,\la)}([x_0,c])+\Var_{\check{G}(.,\la)}([x_0,c])\big).$$
By combining \eqref{0101011} and \eqref{010101012} we obtain
$$\vert\check{u}^-(s,\la)\vert_1\leq\alpha(\la)+\beta(\la)\Var_{R(.,\la)}([x_0,c])+\int_{[x_0,s)} M\vert\check{u}^-(.,\la)\vert_1dV_{R(.,\la)},$$
for all $s\in[x_0,c)$ and all $\la\in O\setminus\{\la_0\}$.
Now Gronwall's inequality (see, e.g., Lemma 1.3 of \cite{MR1004432}) gives
$$\vert\check{u}^-(x,\la)\vert_1\leq\big(\alpha(\la)+\beta(\la)\Var_{R(.,\la)}([x_0,c])\big)\exp(M\Var_{R(.,\la)}([x_0,c])).$$
Since $\lim_{\la\rightarrow\la_0}\alpha(\la)=\lim_{\la\rightarrow\la_0}\beta(\la)=0$ and since $\Var_{R(.,\la)}([x_0,c])$ is bounded in $O$ it follows that the derivative of $u^-(x,\cdot)$ at $\la_0$, being equal to $v^-(x)$, exists so that our claim is proved.
\end{proof}

We now specialize to the case needed later on.
Suppose $J$ is a constant and invertible matrix, $q$ and $w$ are in $\mc D^{\prime0}(\iOmega)^{n\times n}$, and $f$ a function for which $wf$ is in $\mc D^{\prime0}(\iOmega)^n$.
Then let $r=J^{-1}(\la w-q)$, $g=J^{-1}wf$ and $u_0$ constant.
We introduce the set $\Lambda=\bigcup_{x\in(a,b)}\Lambda_x$ where
$$\Lambda_x=\{\la\in\bb C: \det(\phi_+(x,\la))\det(\phi_-(x,\la))=0\}.$$
Thus, according to Theorem \ref{EUIVP}, the initial value problem $Ju'+qu=w(\la u+f)$, $u(x_0)=u_0$ has a unique balanced solution unless $\la\in\Lambda$.

If $0\not\in\Lambda$, Theorem \ref{T:A.4} gives that $\bigcup_{x\in[s,t]}\Lambda_x$ is a closed set of isolated points, even though $\Lambda$ may not be.
We can now apply Theorem \ref{T:2.3.1} with $\Omega=\bb C\setminus\bigcup_{x\in[s,t]}\Lambda_x$ and $(a,b)=(s,t)$ to obtain that the solutions of the initial value problem $Ju'+qu=w(\la u+f)$, $u(x_0)=u_0$ are analytic as functions of $\la\in\Omega$.

\section{Minimal and maximal relations}\label{sop}
As mentioned in the introduction our goal in this paper is to study the spectral theory of the first-order system
$$Ju'+qu=wf$$
on the interval $\iOmega$.
For the remainder of the paper we require the following hypothesis to be satisfied.
\begin{hyp}\label{H:4.1}
$J$ is a constant, invertible and skew-Hermitian matrix.
Both $q$ and $w$ are in $\mc D^{\prime0}(\iOmega)^{n\times n}$, $w$ is non-negative and $q$ Hermitian.
Moreover, the matrices $2J\pm\Delta_q(x)$ are invertible for all $x\in(a,b)$.
\end{hyp}

If $f\in \mc L^2(w)$, each term in the equation $Ju'+qu=wf$ is a distribution of order $0$ and thus balanced solutions for the associated initial value problem always exist.

We note here that, for $n=1$, the number $2J\pm\Delta_q$ can never be zero while Atkinson's condition can never hold.
If $n>1$, the validity of Atkinson's condition implies that $2J\pm\Delta_q$ is invertible.
In other words, our constraint is considerably less restrictive than Atkinson's.

We now define the maximal and minimal relations in $L^2(w)\exsum L^2(w)$ associated with the system $Ju'+qu=wf$.
It turns out that one has to be more careful about distinguishing between the classes of functions in $L^2(w)$ and their representatives\footnote{As explained in the Appendix \ref{ADM.3} we denote the space of representatives of elements in $L^2(w)$ by $\mc L^2(w)$.}.
Therefore we define first the spaces
$$\mc T_\mx=\{(u,f)\in \mc L^2(w)\exsum \mc L^2(w): u\in\BVl^\#(\iOmega)^n, Ju'+qu=wf\}$$
and
$$\mc T_\mn=\{(u,f)\in \mc T_\mx: \text{$\supp u$ is compact in $\iOmega$}\}.$$
The maximal relation is then
$$T_\mx=\{([u],[f])\in L^2(w)\exsum L^2(w):(u,f)\in\mc T_\mx\}$$
and the minimal relation is
$$T_\mn=\{([u],[f])\in L^2(w)\exsum L^2(w):(u,f)\in\mc T_\mn\}.$$
Here (and elsewhere) we choose, as is customary, brevity over precision: whenever we have a pair $([u],[f])$ in $T_\mx$ or $T_\mn$ we assume that $u$ is an element of $\BVl^\#(\iOmega)^n$.

Note that $T_\mx$ may not be an operator.
Suppose, for example, that $n=2$, $\iOmega=(0,1)$, $J=i\id$, $q=\sm{0&1\\ 1&0}$ and $w=\sm{1&0\\ 0&0}$.
Then $u=(0,1)^\top$ and $f=(1,0)^\top$ satisfy $Ju'+qu=wf$ so that $([u],[f])\in T_\mx$.
Since $\|u\|=0$, but $\|f\|=1$, it follows that $T_\mx$ is not an operator.
Lest we create the impression that the absence of the definiteness condition causes the necessity of dealing with relations rather than operators, we emphasize that the definiteness condition is satisfied in this example.

\begin{comment}
Suppose, for example, that $n=1$, $\iOmega=\bb R$, $J=i$, $q=\beta\delta_0$, and $w=\delta_0$.
Then each element of $L^2(w)$ is a class of functions whose values agree at the point $0$.
As a consequence $L^2(w)$ is one-dimensional.
Solutions of the differential equation $Ju'+qu=wf$ have to be constant on $(-\infty,0)$ and $(0,\infty)$, respectively.
Denoting these constants by $u_\ell$ and $u_r$ we obtain
$$\begin{pmatrix}1 & -1 \\ 1 & 1 \end{pmatrix} \begin{pmatrix}u_r \\ u_\ell \end{pmatrix} = \begin{pmatrix}i\beta & -i \\ 2 & 0\end{pmatrix} \begin{pmatrix}u(0) \\ f(0) \end{pmatrix}.$$
Since the left matrix is invertible we obtain a solution for any choice of $(u(0),f(0))$, \ie, $T_\mx=L^2(w)\exsum L^2(w)$.
For elements of $T_\mn$ we must have $u_\ell=u_r=0$ and thus, since the right matrix is also invertible $u(0)=f(0)=0$, \ie, $T_\mn=\{(0,0)\}$.
Note that $T_\mn^*=T_\mx$ and $T_\mx^*=T_\mn$.
\end{comment}

In Section \ref{sde.3} we introduced the set
$$\Lambda=\{\la\in\bb C: \exists x\in\iOmega: \det(2J\pm\Delta_{\la w-q}(x))=0\}$$
of points where the initial value problem $Ju'+qu=w(\la u+f)$, $u(x_0)=u_0$ may fail to have a balanced solution or may have infinitely many such solutions.
Under the present hypotheses we have that $\la\in\Lambda$ if and only if $\ol\in\Lambda$.
Recall, however, that $0$ is never in $\Lambda$ so that the initial value problem $Ju'+qu=wf$, $u(x_0)=u_0$ has always a unique balanced solution.
Subsequently, we will always tacitly assume that solutions of $Ju'+qu=\la w u+wf$ are balanced unless explicitly stated otherwise.

The next step is to calculate the adjoint of $T_\mn$.
In order to do this, we shall first show that $u^{\pm*}Jv^\pm$ plays the role of the Wronskian of $u$ and $v$ and then adapt the variation of constants formula given in Lemma \ref{varconst} to the present situation.

\begin{lem}\label{T4.1.4a}
If $\la\not\in\Lambda$, $Ju'+qu=\ol wu$, and $Jv'+qv=\la wv$, then
$$u^{+*}Jv^+=u^{-*}Jv^-$$
is constant on $\iOmega$.
\end{lem}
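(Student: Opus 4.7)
The plan is first to show that $u^{+*}Jv^+$ and $u^{-*}Jv^-$ agree pointwise -- which simultaneously exhibits the common quantity as a continuous function -- and then to verify its constancy by integration by parts combined with the two differential equations. The two steps split along the two standard difficulties: understanding the jump behaviour, and controlling the derivative away from (and at) jumps.

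For the pointwise identity I would start from the observation that $u$ is the balanced solution of $u'=\tilde r u$ with $\tilde r=J^{-1}(\ol w-q)$, and $v$ is the balanced solution of $v'=rv$ with $r=J^{-1}(\la w-q)$. Equation \eqref{170616.1}, applied to individual balanced solutions rather than to fundamental matrices, then gives $u^\pm=(\id\pm\Delta_{\tilde r}/2)u$ and $v^\pm=(\id\pm\Delta_r/2)v$ at every point. Using $J^*=-J$ together with the Hermiticity of $w$ and $q$, a direct calculation yields $\Delta_{\tilde r}^*J=-J\Delta_r$, hence $(\id\pm\Delta_{\tilde r}/2)^*J=J(\id\mp\Delta_r/2)$, so that
$$u^{\pm*}Jv^\pm=u^*J(\id\mp\Delta_r/2)(\id\pm\Delta_r/2)v=u^*J(\id-\Delta_r^2/4)v,$$
independent of the sign; this is the claimed identity.

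To prove constancy, set $W=u^{+*}Jv^+=u^{-*}Jv^-$. The just-proved identity shows $W^+=W^-$ everywhere, so $W$ is continuous. Applying \eqref{ibypt} with $t=1/2$ componentwise to the scalar $u^*Jv$ on an arbitrary compact subinterval $[c,d]\subset\iOmega$ yields
$$\int_{[c,d]}\big(u^*J\,dv+(du)^*Jv\big)=(u^*Jv)^+(d)-(u^*Jv)^-(c)=W(d)-W(c).$$
The differential equations then reduce the integrand to zero: $J\,dv=(\la w-q)v$ gives $u^*J\,dv=u^*(\la w-q)v$, while taking adjoints of $J\,du=(\ol w-q)u$, using again $J^*=-J$ and the Hermiticity of $w,q$, gives $(du)^*J=-u^*(\la w-q)$, so that $(du)^*Jv=-u^*(\la w-q)v$. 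The two contributions cancel, hence $W(d)=W(c)$, and $W$ is constant on $\iOmega$.

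The principal technical obstacle is making the measure-algebra manipulations rigorous: the products $u^*J\,dv$ and $(du)^*Jv$ multiply balanced BV functions against matrix-valued distributions of order zero, and the associativity needed for expressions such as $u^*(\la w-q)v$ relies on the conventions of Appendix \ref{ADM}. Once one verifies that those conventions assign the value $u^\#(x)\mu(\{x\})v^\#(x)$ at each atom $x$ (precisely what the cancellation above demands) and that the scalar formula \eqref{ibypt} extends component-wise to the vector-matrix setting with the constant $J$ pulled through, the cancellation holds simultaneously on the absolutely continuous part (where it reduces to the classical product rule) and at the atoms (where it reproduces the algebraic identity $\Delta_{\tilde r}^*J=-J\Delta_r$ used in the first step).
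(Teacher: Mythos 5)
Your proof is correct and follows essentially the same route as the paper's: the constancy of $u^{-*}Jv^-$ (and of $u^{+*}Jv^+$) is obtained from the $t=1/2$ integration by parts formula \eqref{ibypt} together with the two differential equations and the (skew-)Hermiticity of $J$, $q$, $w$, exactly as in the paper. The only difference is in how the two one-sided expressions are identified: the paper simply notes that the two constant functions agree off a countable set, whereas you prove the slightly stronger pointwise jump identity $u^{\pm*}Jv^{\pm}=u^*J(\id-\Delta_r^2/4)v$ via $\Delta_{\tilde r}^*J=-J\Delta_r$; both steps are valid.
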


\begin{proof}
A variant of the integration by parts formula \eqref{ibypt} for $t=1/2$ gives
$$(u^{-*}Jv^-)(x) -(u^{-*}Jv^-)(y)=\int_{[y,x)}\big(-(Ju')^*v+u^*Jv'\big).$$
But, using the differential equation and the fact that $q$ and $w$ are Hermitian, the right-hand side is $0$.
Thus $u^{-*}Jv^-$ and, by a similar argument, $u^{+*}Jv^+$ are constant.
These constants have to be identical since $u^-=u^+$ and $v^-=v^+$ outside a countable set.
\end{proof}

\begin{lem}\label{varconst2}
For each $\la\not\in\Lambda$ let $U(\cdot,\la)$ be the fundamental matrix for $Ju'+qu=\la wu$ that satisfies $U(x_0,\la)=\id$ for some fixed $x_0\in(a,b)$ which is a point of continuity for both $Q$ and $W$, the antiderivatives of $q$ and $w$.

Suppose $f\in \mc L^1_\loc(w)$ and the functions $u^\pm$ are given by
\begin{equation}\label{150812.1}
u^-(x,\la)=U^-(x,\la)J^{-1}\int_{[x_0,x)} U(\cdot,\ol)^* wf, \quad x\geq x_0
\end{equation}
and
\begin{equation}\label{150812.2}
u^+(x,\la)=-U^+(x,\la)J^{-1}\int_{(x,x_0]} U(\cdot,\ol)^*wf, \quad x\leq x_0.
\end{equation}
Then $u^\#$ is a balanced solution of the initial value problem $Ju'+qu=\la wu +wf$, $u(x_0)=0$.
Conversely, given a balanced solution $u$ of this initial value problem, the associated functions $u^\pm$ are given by formulas \eqref{150812.1} and \eqref{150812.2}.
\end{lem}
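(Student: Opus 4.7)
The plan is to reduce to Lemma~\ref{varconst} applied to the first-order form $u'=ru+g$ with $r=J^{-1}(\la w-q)$ and $g=J^{-1}wf$, using $U(\cdot,\la)$ as the fundamental matrix, and then to recast the generic integrand $H^{-1}g$ in the form $J^{-1}U(\cdot,\ol)^*wf$ by means of a Wronskian computation. The hypotheses of Lemma~\ref{varconst} are satisfied because $\la\notin\Lambda$ together with Hypothesis~\ref{H:4.1} yields invertibility of $\id\pm\Delta_r/2 = \tfrac12 J^{-1}(2J\pm(\la\Delta_w-\Delta_q))$. Moreover, the initial-value correction $u_0^\pm = U^\pm(x_0)^{-1}\Delta_g(x_0)/2$ appearing in Lemma~\ref{varconst} vanishes in the present situation: since $x_0$ is a point of continuity of $W$, one has $\Delta_w(x_0)=0$, hence $\Delta_{wf}(x_0)=0$, hence $\Delta_g(x_0)=0$. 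This explains the absence of any correction term in \eqref{150812.1}--\eqref{150812.2}.

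The main step is the algebraic identity
$$H^{-1} \;=\; J^{-1}U(\cdot,\ol)^*J,$$
from which $H^{-1}g = (J^{-1}U(\cdot,\ol)^*J)(J^{-1}wf) = J^{-1}U(\cdot,\ol)^*wf$, so that formulas \eqref{150811.1}--\eqref{150811.2} of Lemma~\ref{varconst} transcribe directly into \eqref{150812.1}--\eqref{150812.2}. To establish it I would set $V:=U(\cdot,\ol)$ and combine two ingredients. The first is the Wronskian identity $V^{\pm*}JU(\cdot,\la)^\pm = J$, equivalently $V^{\pm*} = J(U^\pm)^{-1}J^{-1}$, obtained by applying Lemma~\ref{T4.1.4a} columnwise to $V$ and $U(\cdot,\la)$ and evaluating at the continuity point $x_0$, where both matrices equal $\id$. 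The second is the factorization $V^\pm = (\id\pm\Delta_{r(\ol)}/2)V$ with $r(\ol)=J^{-1}(\ol w-q)$; taking adjoints and using $J^*=-J$ together with the Hermiticity of $\Delta_q$ and $\Delta_w$ produces $V^{+*} = V^*J(\id-\Delta_r/2)J^{-1}$. Comparing this with the Wronskian expression for $V^{+*}$ and recalling $H=(\id-\Delta_r/2)U^+$ yields $V^* = JH^{-1}J^{-1}$ after one rearrangement. The parallel computation starting from $V^{-*}$ and using $H=(\id+\Delta_r/2)U^-$ gives the same conclusion and serves as a consistency check.

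The converse statement is then immediate: any balanced solution $u$ of $Ju'+qu=\la wu+wf$ with $u(x_0)=0$ is also a balanced solution of $u'=ru+g$ with the same initial value, so the converse part of Lemma~\ref{varconst} forces it to be given by \eqref{150811.1}--\eqref{150811.2}, which coincide with \eqref{150812.1}--\eqref{150812.2} by the identity just proved. The hard part is the sign-and-adjoint bookkeeping in the derivation of the key identity $H^{-1}=J^{-1}U(\cdot,\ol)^*J$; once the interplay among $J^*=-J$, $\Delta_q^*=\Delta_q$, $\Delta_w^*=\Delta_w$, and the two factorizations of $H$ is correctly untangled, the remainder of the proof is direct substitution into Lemma~\ref{varconst}.
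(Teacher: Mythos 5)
Your proposal is correct and follows essentially the same route as the paper: reduce to Lemma~\ref{varconst} with $r=J^{-1}(\la w-q)$, $g=J^{-1}wf$, and establish the key identity $H(\cdot,\la)^{-1}=J^{-1}U(\cdot,\ol)^*J$ by combining the Wronskian constancy of Lemma~\ref{T4.1.4a} with the adjoint of \eqref{170616.1} and the factorization \eqref{170701.1}. Your explicit observation that $\Delta_g(x_0)=0$ (so the correction terms $u_0^\pm$ of Lemma~\ref{varconst} vanish) is a detail the paper leaves implicit, but it is not a different argument.
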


\begin{proof}
Let $r=J^{-1}(\la w-q)$ and $g=J^{-1}wf$.
Then $\Delta_r=J^{-1}(\la \Delta_w-\Delta_q)$.
From Lemma~\ref{T4.1.4a} we get $U^+(\cdot,\ol)^*JU^+(\cdot,\la)=J$.
This and the adjoint of equation \eqref{170616.1} for $\ol$ gives
\begin{multline*}
J=U(\cdot,\ol)^*(\id-\frac12(\la \Delta_w-\Delta_q) J^{-1})JU^+(\cdot,\la)\\
 =U(\cdot,\ol)^*J(\id-\frac12 J^{-1}(\la \Delta_w-\Delta_q))U^+(\cdot,\la).
\end{multline*}
In view of \eqref{170701.1} we have now $J=U(\cdot,\ol)^*JH(\cdot,\la)$, \ie, $H(\cdot,\la)^{-1}=J^{-1}U(\cdot,\ol)^*J$.
Use this in Lemma \ref{varconst}.
\end{proof}

We are now ready to prove that the maximal relation is the adjoint of the minimal relation, which is the cornerstone of all subsequent considerations.

\begin{thm}\label{T4.1.6}
$T_\mn^*=T_\mx$.
\end{thm}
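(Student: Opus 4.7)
The plan is to prove the two inclusions separately. The inclusion $T_\mx \subseteq T_\mn^*$ is the straightforward half: for $([u],[f]) \in T_\mx$ and $([\phi],[\psi]) \in T_\mn$, the identity $\<\psi,u\>_w = \<\phi,f\>_w$ reduces, via $J^* = -J$ and $q^* = q$, to the statement that $\int_\iOmega [(d\phi)^* Ju + \phi^* J\,du] = 0$. This follows from the balanced integration-by-parts formula \eqref{ibypt} with $t = 1/2$, applied to the scalar function $\phi^* J u$ on any compact subinterval of $\iOmega$ containing $\supp \phi$, because the boundary terms vanish.

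For the reverse inclusion $T_\mn^* \subseteq T_\mx$, let $([v],[g]) \in T_\mn^*$. My first step is to fix one balanced solution $u_0 \in \BVl^\#(\iOmega)^n$ of $Ju_0' + qu_0 = wg$, which exists by Lemma \ref{varconst2} with $\la = 0$. Applying the integration-by-parts calculation from the first half to $u_0$ in place of $u$ (this is legitimate since $\phi$ has compact support and $u_0$ is bounded thereon, even though $u_0$ need not lie globally in $\mc L^2(w)$) and combining with the defining property of $T_\mn^*$ yields
\[
\<\psi,\,v - u_0\>_w = 0 \quad \text{for every } ([\phi],[\psi]) \in T_\mn.
\]

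To turn this orthogonality into useful information, I would next characterize precisely which $\psi$ arise as second coordinates of elements of $T_\mn$. Let $U = U(\cdot,0)$ be the fundamental matrix of the homogeneous equation normalized at some fixed $x_0 \in \iOmega$. Using Lemma \ref{varconst2} with $\la = 0$ and $f$ replaced by $\psi$, every solution of $J\phi' + q\phi = w\psi$ differs from the particular solution given there by a term $Uc$; requiring compact support of $\phi$ near both endpoints of $\iOmega$ forces $c = 0$ together with the vector constraint $\int_\iOmega U^* w \psi = 0$. So the admissible $\psi$ are exactly the compactly supported elements of $\mc L^2(w)$ that are $w$-orthogonal to every column of $U$. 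Restricting the orthogonality above to $\psi$ supported in a fixed compact subinterval $[\alpha,\beta] \subset \iOmega$ and invoking the finite-codimensional duality in the Hilbert space $L^2([\alpha,\beta],w)$, I obtain that the restriction $(v - u_0)|_{[\alpha,\beta]}$ equals $U c_{[\alpha,\beta]}$ in $L^2([\alpha,\beta],w)$ for some $c_{[\alpha,\beta]} \in \bb C^n$.

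The main obstacle, and the reason the definiteness condition is \emph{not} required here, is the patching of the local vectors $c_{[\alpha,\beta]}$ into a single global vector. Each $c_{[\alpha,\beta]}$ is well-defined only modulo the subspace $N([\alpha,\beta]) = \{c \in \bb C^n : Uc = 0 \text{ in } L^2([\alpha,\beta],w)\}$, and the cosets are compatible under nesting of intervals. Since $\{N([\alpha,\beta])\}$ is a decreasing family of subspaces of $\bb C^n$, it stabilizes at an intersection $N_\infty$, and for all sufficiently large intervals the coset $c_{[\alpha,\beta]} + N_\infty$ is the same. Picking any representative $c$ of this common coset, the function $\tilde v = u_0 + Uc$ lies in $\BVl^\#(\iOmega)^n$, satisfies $J\tilde v' + q\tilde v = wg$, and represents the same class as $v$ in $L^2(w)$, so $([v],[g]) \in T_\mx$ and the proof is complete.
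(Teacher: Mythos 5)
Your proof is correct and follows essentially the same route as the paper: the easy inclusion via integration by parts over the compact support of the minimal-domain element, and the hard inclusion by fixing a particular solution of $Jv'+qv=wg$, using the variation-of-constants formula to identify the admissible second components of $T_\mn$ as the compactly supported functions orthogonal to the columns of $U(\cdot,0)$, and concluding that $v$ differs locally from the particular solution by a homogeneous solution modulo $w$-null functions (the paper packages this as $\ran\tilde T_0=L^2(\tilde w)\ominus K_0$). Your explicit stabilization argument for the decreasing family $N([\alpha,\beta])$ supplies the patching of the local correction vectors into a single global one, a step the paper's proof passes over in a single sentence.
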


\begin{proof}
To prove $T_\mx\subset T_\mn^*$ assume that $([v],[g])\in T_\mx$ and $([u],[f])\in T_\mn$.
We need to show that $\<u,g\>=\<f,v\>$.
Suppose $\supp u\subset(s,t)\subset[s,t]\subset\iOmega$.
Then, using integration by parts,
$$\<u,g\>=\int u^*(Jv'+qv)=\int ((Ju')^*v+u^*qv)=\int (wf)^*v=\<f,v\>.$$

To prove $T_\mn^*\subset T_\mx$ requires more effort.
Given $[s,t]\subset (a,b)$ we define the distribution $\tilde w=\chi_{(s,t)}w$.
Accordingly we have a relation $\tilde T_\mx$ associated with $J$, $q$, and $\tilde w$.
Let $\tilde T_0=\{([u],[f])\in\tilde T_\mx:\supp u\subset [s,t]\}$ and $K_0=\ker\tilde T_\mx$ and note that $\dim K_0\leq n$.
Then, as we will show below, $\ran \tilde T_0=L^2(\tilde w)\ominus K_0$.
Assume now that $([v],[g])\in T_\mn^*$.
Since $0\not\in\Lambda$ Theorem \ref{EUIVP} gives the existence of a balanced function $v_1$ of locally bounded variation satisfying $Jv_1'+qv_1=wg$.
If $([u],[f])\in \tilde T_0$, let $\tilde f=\chi_{(s,t)}f$ so that $\tilde w f=w\tilde f$.
Then $([u],[\tilde f])\in T_\mn$ so that $\<\tilde f,v\>=\<u,g\>$.
An integration by parts shows that
$$\<u,g\>=\int u^*wg=\int u^*(Jv_1'+qv_1)=\int (Ju'+qu)^*v_1=\int f^*\tilde w v_1.$$
Thus $\int f^*\tilde w(v-v_1)=0$, i.e., $[v-v_1]\in K_0$.
This implies that $[v]$ has a representative $v$ such that $Jv'+qv=wg$ on $(s,t)$.
Since this works for all intervals $[s,t]\subset (a,b)$ we see that $([v],[g])\in T_\mx$.

It remains to show that $\ran \tilde T_0=L^2(\tilde w)\ominus K_0$.
Let $[f]\in \ran \tilde T_0$ and $[k]\in K_0$.
Then $Ju'+qu=\tilde wf$ for some $u$ whose support is in $[s,t]$ and $k$ (chosen appropriately in $[k]$) satisfies $Jk'+qk=0$.
Integration by parts shows then
$$\int f^{\ast}\tilde wk=\int u^{\ast}(Jk'+qk)=0.$$
Conversely, let $f\in L^2(\tilde w)\ominus K_0$ and $U$ the fundamental matrix for $Ju'+qu=0$, which equals the identity matrix at some point of continuity for $Q$ smaller than $s$.
By Lemma \ref{varconst2} the balanced solution for the initial value problem $Jv'+qv=\tilde wf$, $v(s)=0$ vanishes to the left of $s$.
Since the columns of $U$ are representatives of elements in $K_0$, the same lemma shows, when $x>t$, that
$$v^-(x)=U^-(x)J^{-1}\int_{[s,t]} U^{\ast}\tilde wf=0$$
so that $\supp v\subset [s,t]$ and hence $f\in\ran\tilde T_0$.
\end{proof}

If $([v],[g])$ and $([u],[f])$ are in $T_\mx$ we have that $v^*wf-g^*wu=v^*Ju'+v^{\prime*}Ju$ is a finite measure on $\iOmega$ and
$$(v^*Ju)^-(d)=(v^*Ju)^-(c)+\int_{[c, d)}(v^*Ju'+v^{\prime*}Ju)$$
if $[c,d]\subset(a,b)$.
Thus, if $[c, d_n)$ is a non-decreasing sequence of intervals converging to $[c,b)$, we obtain the convergence of $(v^*Ju)^-(d_n)$ as $d_n\uparrow b$.
By a similar argument, $(v^*Ju)^+(c_n)$ converges as $c_n\downarrow a$.
We shall denote these limits by $(v^*Ju)^-(b)$ and $(v^*Ju)^+(a)$, respectively.
Note that these limits may depend on the representatives chosen.
However their difference does not, since
\begin{equation}\label{Lagrange}
(v^*Ju)^-(b)-(v^*Ju)^+(a)=\<v,f\>-\<g,u\>
\end{equation}
where the right-hand side depends only on the respective classes but not the representatives.
Equation \eqref{Lagrange} is called Green's formula or Lagrange's identity.

\section{Self-adjoint restrictions of \texorpdfstring{$T_\mx$}{Tmax} and their resolvents}\label{sopt}
According to Appendix \ref{ALR} the symmetric restrictions of $T_\mx$ are determined by the structure of the deficiency spaces of $T_\mn$,
\ie, the spaces $D_\la=\{([u],\la[u])\in T_\mx\}$.
In particular, according to Corollary \ref{C:B.3}, $\dim D_\la$ is independent of $\la$ as long as $\la$ remains in either the upper or the lower half-plane.
Thus the deficiency indices are defined as $n_\pm=\dim D_{\pm i}$ and, according to Theorem \ref{vonNeumann2}, self-adjoint restrictions of $T_\mx$ exist if and only if $n_+=n_-$.
Since the space of solutions of $Ju'+qu=\la wu$ is $n$-dimensional, at least when $\la\not\in\Lambda$, it follows that the deficiency indices are at most $n$.
However, they may be strictly smaller than $n$, since the norm of a non-trivial solution could be zero or infinite.

Theorem \ref{T:B.6} states that any self-adjoint restriction $T$ of $T_\mx$ is given as $T=\ker A$ where $A$ is a surjective linear operator from $T_\mx$ to $\bb C^{n_+}$ with the properties that $T_\mn\subset\ker A$ and $A\mc JA^*=0$ (recall that $\mc J:(u,f)\mapsto(f,-u)$).
Each component of this map $A$ is a bounded linear functional on $T_\mx$ which is determined by its values on $V=D_i\oplus D_{-i}$.
Thus, by Riesz's representation theorem, each component of $A$ is represented by an element of $V$, \ie, $A_j(u,f)=\<(v_j,g_j),(u,f)\>$ for $j=1, ..., n_+$ with $(v_j,g_j)\in V$.

Now note that, according to Lagrange's identity \eqref{Lagrange},
$$\<(v_j,g_j),(u,f)\>=\<g_j,f\>-\<-v_j,u\>=(g_j^*Ju)^-(b)-(g_j^*Ju)^+(a)$$
using the fact that $(g_j,-v_j)$ is also an element of $V$ and hence of $T_\mx$.
Therefore the self-adjoint restrictions of $T_\mx$ are determined by the behavior of $u\in\dom T$ near the boundary of the interval $(a,b)$ and the conditions
\begin{equation}\label{180211.1}
(g_j^*Ju)^-(b)-(g_j^*Ju)^+(a)=0
\end{equation}
are called boundary conditions.
It is useful to recall here that the left-hand side of equation \eqref{180211.1} can be evaluated by choosing representatives of $(v_j,g_j)$ and $(u,f)$.
We have the following theorem.
\begin{thm}\label{T:5.1}
Suppose the deficiency indices of $T_\mn$ satisfy $n_+=n_->0$.
The relation $T$ is a self-adjoint restriction of $T_\mx$ if and only if there are $n_+$ linearly independent elements $(v_1,g_1)$, ..., $(v_{n_+},g_{n_+})$ in $V$ such that $\<(v_k,g_k),(g_\ell,-v_\ell)\>=0$ for all $1\leq k,\ell\leq n_+$.
In this case $T$ is given as
$$T=\{(u,f)\in T_\mx: (g_j^*Ju)^-(b)-(g_j^*Ju)^+(a)=0\text{ for $j=1,...,n_+$}\}.$$
\end{thm}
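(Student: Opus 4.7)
The plan is to invoke the abstract parametrization of Theorem~\ref{T:B.6}: every self-adjoint restriction $T$ has the form $T=\ker A$ for a surjective linear $A\colon T_\mx\to\bb C^{n_+}$ satisfying $T_\mn\subset\ker A$ and $A\mc JA^*=0$, and conversely every such $A$ produces a self-adjoint $\ker A$. The task is to unpack these abstract conditions into the concrete boundary form displayed in the statement.

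As in the discussion preceding the theorem, each component $A_j$ is a bounded linear functional on $T_\mx\subset L^2(w)\exsum L^2(w)$ vanishing on $T_\mn$. Using the orthogonal von Neumann decomposition $T_\mx=T_\mn\oplus V$, valid because $T_\mn^*=T_\mx$ by Theorem~\ref{T4.1.6}, Riesz representation furnishes $(v_j,g_j)\in V$ such that $A_j(u,f)=\<(v_j,g_j),(u,f)\>$, and surjectivity of $A$ is equivalent to linear independence of the $(v_j,g_j)$. To convert the kernel condition into boundary form, I would apply Lagrange's identity \eqref{Lagrange} to the pair $(g_j,-v_j)\in V\subset T_\mx$ and $(u,f)\in T_\mx$ to obtain
$$(g_j^*Ju)^-(b)-(g_j^*Ju)^+(a)=\<g_j,f\>-\<-v_j,u\>=\<(v_j,g_j),(u,f)\>=A_j(u,f).$$

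Next I would unravel $A\mc JA^*=0$. Since $A^*c=\sum_j c_j(v_j,g_j)$ and $\mc J(v_j,g_j)=(g_j,-v_j)$, the $(k,j)$-entry of the matrix $A\mc JA^*$ is precisely $\<(v_k,g_k),(g_j,-v_j)\>$, so $A\mc JA^*=0$ is exactly the stated orthogonality condition on the representers. This finishes the forward direction. The converse is immediate by reversal: given linearly independent $(v_1,g_1),\dots,(v_{n_+},g_{n_+})\in V$ with the orthogonality property, define $A_j$ as the inner product with $(v_j,g_j)$; linear independence forces $A$ to be surjective, $T_\mn\perp V$ yields $T_\mn\subset\ker A$, and the matrix computation above gives $A\mc JA^*=0$, so Theorem~\ref{T:B.6} returns self-adjointness of $\ker A$, while the rewriting already carried out produces the displayed boundary form.

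The only nontrivial ingredient is the orthogonal decomposition $T_\mx=T_\mn\oplus V$ that legitimizes the Riesz step and forces the representers to sit in $V$; this I would import from the appendix on linear relations. Everything else is a direct translation between abstract adjoint conditions and the Wronskian-type boundary pairing supplied by Lagrange's identity, and the proof requires no further analytic input beyond what has already been established for $T_\mx$ and $T_\mn$.
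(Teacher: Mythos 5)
Your proposal is correct and follows essentially the same route as the paper: represent each component of $A$ by an element of $V$ via Riesz (using the von Neumann decomposition $T_\mx=\ov{T_\mn}\oplus V$), identify $(A\mc JA^*)_{k,\ell}$ with $\<(v_k,g_k),(g_\ell,-v_\ell)\>$, translate via Lagrange's identity \eqref{Lagrange}, and invoke Theorem~\ref{T:B.6} in both directions. The only cosmetic slip is writing $T_\mn$ instead of its closure in the decomposition, which does not affect the argument.
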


\begin{proof}
First assume $(v_1,g_1)$, ..., $(v_{n_+},g_{n_+})$ are given with the stated properties.
Define $A:T_\mx\to\bb C^{n_+}$ by $A_j(u,f)=\<(v_j,g_j),(u,f)\>$ for $j=1, ..., n_+$.
Then $A^*:\bb C^{n_+}\to T_\mx$ is given by
$$A^*(y)=\sum_{j=1}^{n_+} y_j(v_j,g_j).$$
It follows, using Lagrange's identity \eqref{Lagrange}, that
\begin{equation}\label{171206.1}
(A\mc JA^*)_{k,\ell}=\<(v_k,g_k),(g_\ell,-v_\ell)\>=(g_k^*Jg_\ell)^-(b)-(g_k^*Jg_\ell)^+(a)
\end{equation}
vanishes for all $k$ and $\ell$.
Thus $A$ satisfies the hypotheses of Theorem \ref{T:B.6} so that $\ker A$ is a self-adjoint restriction of $T_\mx$.

Conversely, if $T$ is a self-adjoint restriction of $T_\mx$, then there is a map $A:T_\mx\to\bb C^{n_+}$ with the properties stated in Theorem \ref{T:B.6}.
Each component of this map is a bounded linear functional on $T_\mx$, \ie, according to Riesz's representation theorem, its $j$th component is given by $(u,f)\mapsto\<(v_j,g_j),(u,f)\>$.
Now employ identity \eqref{171206.1} the other way.
\end{proof}

Since $\<(v_j,g_j),(v_j,g_j)\>\neq0$ it can not happen that $(g_j^*Ju)^+(a)$ and $(g_j^*Ju)^-(b)$ both vanish for all $(u,f)\in \mc T_\mx$.
We have therefore three cases: (1) $(g_j^*Ju)^+(a)=0$ for all $(u,f)\in \mc T_\mx$, (2) $(g_j^*Ju)^-(b)=0$ for all $(u,f)\in \mc T_\mx$, and (3) for some $(u,f)\in \mc T_\mx$ both $(g_j^*Ju)^+(a)$ and $(g_j^*Ju)^-(b)$ are different from zero.
Accordingly there are three kinds of boundary conditions.
We have separated boundary conditions, if we have case (1) or (2) for every $j\in\{1,...,n_+\}$; we have coupled boundary conditions, if we have case (3) for every $j\in\{1,...,n_+\}$; and we have mixed boundary conditions otherwise.

We now have a closer look at the case where the norm of at least one non-trivial solution of $Ju'+qu=\la wu$ is zero, \ie, the case where the definiteness condition mentioned in the introduction is violated.
Since $\|u\|=0$ if and only if $wu$ is the zero distribution, we may state the definiteness condition as saying that the vector space
\begin{equation}\label{170817.1}
\mc L_0=\{u\in \BVl^\#(\iOmega)^n: Ju'+qu=0, wu=0\}
\end{equation}
is trivial, a requirement which we shall not pose in general.
More broadly, suppose $([u],[f])\in T_\mx$ and that there are $u,v\in[u]$ and $f,g\in[f]$ such that $Ju'+qu=wf$ and $Jv'+qv=wg$.
Then $J(u-v)'+q(u-v)=w(f-g)=0$ as well as $w(u-v)=0$, \ie, $u-v\in \mc L_0$.
It follows that, given an element $([u],[f])\in T_\mx$, we may find a unique $u\in\dom\mc T_\mx$ such that $Ju'+qu=wf$.

Let $\RN$ denote the matrix of Radon-Nikodym derivatives of $w$ with respect to $\tr w$.
Since $\RN\geq 0$ we get that$\int(u^*\RN u)\tr w=0$ and hence $u^*\RN u=0$ almost everywhere with respect to $\tr w$, if $u\in\mc L_0$.
If $\dim \mc L_0=n$ this implies $\RN=0$ and hence $w=0$.
In this case $L^2(w)=\{0\}$, a case which we shall henceforth ignore.

Now, fix a point $x_0\in(a,b)$ and define $N_0=\{u(x_0):u\in \mc L_0\}\subset \bb C^n$.
The existence and uniqueness theorem implies that $\dim N_0=\dim\mc L_0$.
Thus, if $([u],[f])\in T_\mx$, there is a unique balanced $v\in[u]$ such that $Jv'+qv=wf$ and $v(x_0)\in N_0^\perp$.
This leads to the following definition.
\begin{defn}\label{D:5.2}
Suppose $x_0$ is a point in $(a,b)$.
Then the evaluation operator $E:T_\mx\to\BVl^\#((a,b))^n$ assigns to each $([u],[f])\in T_\mx$ the unique balanced representative $v$ of $[u]$ satisfying $Jv'+qv=wf$ and $v(x_0)\in N_0^\perp$.
\end{defn}
We emphasize that $E$ depends on the choice of the point $x_0$.

Suppose that $Q$ and $W$ are of bounded variation on an interval $(c,d)$ containing $x_0$ and contained in (or equal to) $(a,b)$.
Then the balanced representatives of elements of $\dom T_\mx$ are also of bounded variation on $(c,d)$.
Thus the restriction of $E(([u],[f]))$ to $(c,d)$ is an element $u^\#$ of the Banach space $\BV^\#((c,d))^n$ which is equipped with the norm $\vertiii u=|u(x_0)|_1+\Var_{u}((c,d))$.
We then define the linear operator $E_{(c,d)}:T_\mx\to\BV^\#((c,d))^n$ by $E_{(c,d)}(([u],[f]))=u^\#$.

\begin{lem}\label{L:6.2}
Suppose $(c,d)$ is a subinterval of $(a,b)$ such that $Q$ and $W$ are of bounded variation on $(c,d)$.
Then $E_{(c,d)}:T_\mx\to\BV^\#((c,d))^n$ is a bounded operator.
\end{lem}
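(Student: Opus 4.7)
The plan is to apply the closed graph theorem: $T_\mx = T_\mn^*$ is closed, hence Banach with the graph norm, while $\BV^\#((c,d))^n$ equipped with $\vertiii{\cdot}$ is Banach, and $E_{(c,d)}$ is linear, so it suffices to show the graph of $E_{(c,d)}$ is closed. Suppose $([u_k],[f_k])\to([u],[f])$ in $T_\mx$ and $v_k:=E_{(c,d)}([u_k],[f_k])\to v$ in $\BV^\#((c,d))^n$; I must show $v=u^\#|_{(c,d)}$ where $u^\#=E([u],[f])$.

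Convergence in $\vertiii{\cdot}$ yields $v_k\to v$ uniformly on $(c,d)$. Taking distributional limits in $Jv_k'+qv_k=wf_k$ on $(c,d)$ --- using uniform convergence of $v_k$ to see that $v_k'\to v'$ and $qv_k\to qv$ as distributions, and using $[f_k]\to[f]$ in $L^2(w)$ together with the Cauchy-Schwarz bound $|wf|(K)\leq C(K,w)\,\|[f]\|$ (valid on any compact $K\subset(a,b)$ since $Q$ and $W$ are of bounded variation on $(c,d)$) to see that $wf_k\to wf$ as distributions --- one obtains $Jv'+qv=wf$ on $(c,d)$. Since $v_k(x_0)\in N_0^\perp$ for each $k$, closedness of $N_0^\perp$ gives $v(x_0)\in N_0^\perp$. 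To conclude $v=u^\#|_{(c,d)}$ via uniqueness for the initial value problem (Theorem~\ref{EUIVP}, applicable since $0\notin\Lambda$), it only remains to verify $v(x_0)=u^\#(x_0)$.

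This reduces the whole argument to showing that the evaluation map $\Phi:T_\mx\to N_0^\perp$, $\Phi([u],[f])=u^\#(x_0)$, is continuous, which is the main obstacle: when the definiteness condition fails ($\mc L_0\neq 0$), the canonical choice $u^\#(x_0)\in N_0^\perp$ is not directly controlled by $\|[u]\|_{L^2(w)}$. Since $N_0^\perp$ is finite-dimensional, boundedness of $\Phi$ is equivalent to closedness of $\ker\Phi$. To verify the latter, take $([u'_k],[f'_k])\in\ker\Phi$ converging to $([u'],[f'])$ in $T_\mx$. Each $u^\#_k$ is then the unique balanced solution of $Jy'+qy=wf'_k$ with $y(x_0)=0$, and a Gronwall argument in the distributional setting (as in the proof of Theorem~\ref{T:2.3.1}, combined with the Cauchy-Schwarz estimate for $\int_K|w(f'_k-f')|$ by $\|[f'_k]-[f']\|$) produces uniform convergence on compacts of $(a,b)$ of $u^\#_k$ to the analogous solution $u^\#_\infty$ with right-hand side $wf'$. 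Uniform convergence on compacts where $w$ has finite mass, together with $[u^\#_k]=[u'_k]\to[u']$ in $L^2(w)$, forces $[u^\#_\infty]=[u']$, so $u^\#_\infty$ is a balanced representative of $[u']$ satisfying $Jy'+qy=wf'$ with $u^\#_\infty(x_0)=0\in N_0^\perp$; by the defining uniqueness of $E$, $u^\#_\infty=E([u'],[f'])$ and $\Phi([u'],[f'])=0$. Continuity of $\Phi$ then gives $v_k(x_0)=u^\#_k(x_0)\to u^\#(x_0)$, while uniform convergence gives $v_k(x_0)\to v(x_0)$, hence $v(x_0)=u^\#(x_0)$ and the closed graph theorem delivers the boundedness of $E_{(c,d)}$.
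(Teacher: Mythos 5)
Your proof is correct, and its outer shell coincides with the paper's: both apply the closed graph theorem to the everywhere-defined operator $E_{(c,d)}$, both show that the $\BV^\#$-limit $v$ solves $Jv'+qv=wf$ on $(c,d)$, and both ultimately rest on the same two ingredients --- continuity of the (variation-of-constants) solution map in the inhomogeneity $[f]$, and the identification of an $L^2(w)$-limit with a locally uniform limit via pointwise convergence almost everywhere with respect to $\tr w$ along a subsequence. The genuine divergence is in how the value at $x_0$ is pinned down. The paper works directly with the given sequence: it deduces that $u-v$ solves the homogeneous equation with $w(u-v)=0$ and concludes from $(u-v)(x_0)\in N_0\cap N_0^\perp=\{0\}$. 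You instead isolate continuity of the evaluation map $\Phi:([u],[f])\mapsto E([u],[f])(x_0)$ as a standalone fact, prove it by a closed-kernel argument (legitimate, since a linear map into a finite-dimensional space is bounded precisely when its kernel is closed), and then finish by uniqueness for the initial value problem on $(c,d)$. Your route is longer --- a second closed-graph-type argument nested inside the first --- but it buys a reusable intermediate statement (boundedness of $\Phi$ on all of $T_\mx$) and, because the kernel argument runs on all of $(a,b)$, it cleanly separates the global objects $\mc L_0$, $N_0$ from their analogues relative to the subinterval $(c,d)$, a distinction the paper's closing step passes over by noting that $(c,d)$ may be taken arbitrarily large. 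One small imprecision: the estimate $\int_K|wf|\le C(K)\,\|[f]\|$ holds for every compact $K\subset(a,b)$ simply because $\tr W$ is always locally of bounded variation; it does not require $Q$ and $W$ to be of bounded variation on $(c,d)$.
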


\begin{proof}
Since $E_{(c,d)}$ is defined on all of $T_\mx$ the closed graph theorem implies the claim if we can show that $E_{(c,d)}$ is a closed operator.
Thus assume that $([u_j],[f_j])$ converges to $([u],[f])$ in $T_\mx$ and that $E_{(c,d)}([u_j],[f_j])$ converges to $v$ in $\BV^\#((c,d))^n$.
We may assume that $E_{(c,d)}([u_j],[f_j]))$ and $E_{(c,d)}([u],[f])$ are the restrictions of $u_j$ and $u$, respectively, to the interval $(c,d)$.
We will, however, make no distinction in the notation.
Our goal is to show that $u=v$ on $(c,d)$.

First note that $u_j(x_0)\in N_0^{\perp}$ and $|u_j(x_0)-v(x_0)|_1\rightarrow0$ imply that $v(x_0)\in N_0^{\perp}$.
Next pick a point $t\in(c,d)$ such that $\Delta_q(t)=\Delta_w(t)=0$.
Let $U$ be the fundamental matrix of $Ju'+qu=0$ which satisfies $U(t)=\id$.
Lemma \ref{varconst2}, the variation of constants formula, gives
$$u_j^-(x)=U^-(x)\Big(u_j(t)+J^{-1}\int_{[t,x)}U^{\ast}wf_j\Big)$$
as long as $x\geq t$.
Taking the limit as $j\to\infty$ gives
$$v^-(x)=U^-(x)\Big(v(t)+J^{-1}\int_{[t,x)}U^{\ast}wf\Big)$$
since convergence in $\BV^\#((c,d))^n$ implies pointwise convergence and the integral may be considered as a vector of scalar products which are, of course, continuous.
Applying Lemma \ref{varconst2} in the reverse gives that $v$ is a balanced solution for $Jv'+qv=wf$ on $[t,d)$.
That it is also a solution on $(c,t]$ follows similarly.
Since $u$ satisfies the same equation we have that $u-v$ satisfies $Jy'+qy=0$.
Next we show $w(u-v)=0$ on $(c,d)$.

Now $\|u-u_j\|\rightarrow0$ implies that $\int_{(c,d)}(u-u_j)^{\ast}\RN(u-u_j) \tr w\rightarrow0$ where $\RN$ is the matrix of Radon-Nikodym derivatives of $w$ with respect to $\tr w$.
This entails that the integrand converges pointwise almost everywhere with respect to $\tr w$.
On the other hand we know that the pointwise limit is $(u-v)^{\ast}\RN(u-v)$.
It follows that $w(u-v)=0$ on $(c,d)$.
Since $(c,d)$ can be chosen arbitrarily large in $(a,b)$ this shows $u-v\in \mc L_0$.
Since $(u-v)(x_0)\in N_0^\perp\cap N_0=\{0\}$ we get $u=v$ on $(a,b)$.
\end{proof}

In the sequel an important role will be played by $P$, the orthogonal projection from $\bb C^n$ onto $N_0^\perp$.
Also, from now on, $U(\cdot,\la)$ is the fundamental matrix for $Ju'+qu=\la wu$ which satisfies $U(x_0,\la)=\id$ (when $\la\not\in\Lambda$) and we assume that $x_0$ is chosen in such a way that the antiderivatives of $q$ and $w$ are continuous at $x_0$.
The existence and uniqueness theorem implies then that the columns of $U(\cdot,\la)(\id-P)$ span $\mc L_0$.
In particular, $w U(\cdot,\la)(\id-P)=0$.

\begin{lem}\label{L:5.1}
For $\la\in\bb C\setminus\Lambda$ let $B$ be the set of all vectors $\int U(\cdot,\ol)^*wf$ for compactly supported $f\in L^2(w)$.
Then $B=N_0^\perp$.
\end{lem}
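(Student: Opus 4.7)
The plan is to prove the double inclusion $B \subset N_0^\perp$ and $N_0^\perp \subset B$. Since both sides are linear subspaces of the finite-dimensional space $\bb C^n$, the second inclusion is equivalent to $B^\perp \subset N_0$, and it is this dual form that I shall actually verify.

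For $B \subset N_0^\perp$, I pick $\eta \in N_0$ and a compactly supported $f \in L^2(w)$. The discussion preceding the lemma shows that the columns of $U(\cdot,\ol)(\id-P)$ span $\mc L_0$; since $\eta = (\id-P)\eta$ and $U(x_0,\ol) = \id$, the function $y := U(\cdot,\ol)\eta$ belongs to $\mc L_0$. In particular $wy = 0$, and because $w$ is Hermitian this yields $y^* w = 0$ (equivalently, writing $\RN$ for the matrix of Radon--Nikodym derivatives of $w$ with respect to $\tr w$, the identity $\RN y = 0$ a.e.\ forces $y^*\RN = 0$ a.e.). Therefore $\eta^*\!\int U(\cdot,\ol)^* w f = \int y^* w f = 0$, proving that $N_0 \perp B$.

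For the reverse inclusion, suppose $\eta \in B^\perp$. The defining condition $\eta^*\!\int U(\cdot,\ol)^* w f = 0$ can be rewritten as $\<[U(\cdot,\ol)\eta],[f]\>_{L^2(w)} = 0$ for every compactly supported $f \in L^2(w)$. By the density of compactly supported classes in $L^2(w)$ (Appendix~\ref{ADM.3}), this forces $[U(\cdot,\ol)\eta] = 0$, that is, $wU(\cdot,\ol)\eta = 0$. Consequently $J(U(\cdot,\ol)\eta)' + qU(\cdot,\ol)\eta = \ol\, wU(\cdot,\ol)\eta = 0$, so $U(\cdot,\ol)\eta \in \mc L_0$; evaluating at $x_0$ yields $\eta = U(x_0,\ol)\eta \in N_0$.

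The argument is essentially a single Euclidean duality observation, so I do not anticipate any serious obstacle. The one point that needs a bit of bookkeeping is the identification of the ordinary pairing $\eta^*\!\int U(\cdot,\ol)^* w f$ with the $L^2(w)$ pairing $\<[U(\cdot,\ol)\eta],[f]\>$, and the corresponding equivalence between vanishing of $wy$ as a vector distribution and the vanishing of the class $[y]$ in $L^2(w)$. Both rest on the non-negativity of $w$ and the representation of its inner product via $\RN$ and $\tr w$ recalled in the appendix; once granted, no estimate beyond the uniqueness part of Theorem~\ref{EUIVP} is needed.
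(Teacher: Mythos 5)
Your proof is correct and follows essentially the same route as the paper: the inclusion $B\subset N_0^\perp$ via $wU(\cdot,\ol)(\id-P)=0$, and the reverse inclusion by showing that any $\eta$ orthogonal to $B$ makes $wU(\cdot,\ol)\eta$ the zero distribution, hence $U(\cdot,\ol)\eta\in\mc L_0$ and $\eta\in N_0$. The only wording to adjust is the appeal to density of compactly supported classes in $L^2(w)$, which presupposes $U(\cdot,\ol)\eta\in\mc L^2(w)$; instead test against $f=\chi_K U(\cdot,\ol)\eta$ for compact $K$ (admissible since $U(\cdot,\ol)\eta$ is locally bounded) and use $\RN\geq0$ to conclude $wU(\cdot,\ol)\eta=0$ directly, as the paper implicitly does.
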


\begin{proof}
Since $w U(\cdot,\ol)=w U(\cdot,\ol)P$ and thus $U(\cdot,\ol)^*w=P U(\cdot,\ol)^*w$, the set $B$ is a subset of $N_0^\perp=\ran P$.
Now suppose $\alpha$ is an element of $N_0^\perp$ which is perpendicular to $B$, \ie, $\alpha^*\int U(\cdot,\ol)^* w f=0$ for all compactly supported $f\in L^2(w)$.
Then each component of $\alpha^*U(\cdot,\ol)^* w$ is the zero distribution as is each component of $wU(\cdot,\ol)\alpha$.
Thus $U(\cdot,\ol)\alpha\in\mc L_0$ so that $\alpha\in N_0$.
Hence $\alpha=0$.
\end{proof}

In the remainder of this section let $T$ be a particular self-adjoint restriction of $T_\mx$.
In Appendix \ref{ALR.2} we defined the resolvent set
$$\rho(T)=\{\la\in\bb C: \ker(T-\la)=\{0\}, \ran(T-\la)=L^2(w)\}$$
and the resolvent of $T$ at $\la$, \ie, the closed linear operator $R_\la=(T-\la)^{-1}$ when $\la\in\rho(T)$.
Theorem \ref{T:B.2} shows that $\bb C\setminus\bb R\subset \rho(T)$.
The set $\rho(T)$ may also intersect the real axis but recall that it is necessarily open.

Suppose $\la\in\rho(T)$ and $[f]\in L^2(w)$.
Since $R_\la$ is an operator defined on all of $L^2(w)$ the class $[f]$ determines the class $[u]=R_\la([f])$ and hence the pair $([u],\la[u]+[f])\in T\subset T_\mx$ uniquely.
The evaluation operator $E$, in turn, determines a unique representative $v$ of $[u]$ satisfying $Jv'+qv=w(\la v+f)$ and $v(x_0)\in N_0^\perp$.
We denote $v$ by $E_\la f$, \ie, $E_\la$ is a map from $L^2(w)$ to $\BVl^\#((a,b))^n$.
Note that we write $E_\la f$ since this does not depend on the representative chosen from $[f]$.

\begin{lem}\label{L:5.5}
$E_\la$ satisfies a resolvent relation, \ie,
$$E_\la-E_\mu=(\la-\mu)E_\la R_\mu$$
whenever $\la$ and $\mu$ are in $\rho(T)$.
Moreover, given $f\in \mc L^2(w)$ and $x\in(a,b)$, the function $\la\mapsto (E_\la f)(x)$ is analytic in $\rho(T)$.
\end{lem}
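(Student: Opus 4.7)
The plan is to split the claim into the resolvent identity and the analyticity, and to handle each separately.

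For the resolvent identity, I would set $v_\la = E_\la f$, $v_\mu = E_\mu f$, and $\tilde v = E_\la R_\mu f$, and verify directly from the three defining equations that both $v_\la - v_\mu$ and $(\la - \mu)\tilde v$ are balanced solutions of the \emph{same} inhomogeneous system
$$Jy'+qy=\la wy+(\la-\mu)wv_\mu.$$
Hence their difference $h:=v_\la-v_\mu-(\la-\mu)\tilde v$ satisfies $Jh'+qh=\la wh$. Separately, the standard operator resolvent identity $R_\la-R_\mu=(\la-\mu)R_\la R_\mu$ (which follows by rearranging $(T-\la)R_\la=\id$ and $(T-\mu)R_\mu=\id$, valid as usual since $\la,\mu\in\rho(T)$) applied to $[f]$ gives $[h]=0$ in $L^2(w)$. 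This forces $\RN h=0$ with respect to $\tr w$-a.e., hence $wh=0$ as a distribution, so in fact $Jh'+qh=0$ and $h\in\mc L_0$. Now each summand in $h(x_0)$ lies in $N_0^\perp$ by construction of $E_\la$, while $h\in\mc L_0$ forces $h(x_0)\in N_0$; hence $h(x_0)\in N_0\cap N_0^\perp=\{0\}$, and Theorem \ref{EUIVP} yields $h\equiv 0$, giving the stated identity.

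For the analyticity, the Neumann series obtained from $R_\la(\id-(\la-\mu)R_\mu)=R_\mu$ shows that $\la\mapsto R_\la$ is analytic in operator norm on $\rho(T)$, so $\la\mapsto([u_\la],\la[u_\la]+[f])=(R_\la[f],\la R_\la[f]+[f])$ is an analytic curve in $T\subset T_\mx$ (recall $T_\mx=T_\mn^*$ is closed). Given $x\in(a,b)$, I would pick an open subinterval $(c,d)$ containing both $x$ and $x_0$ with $[c,d]\subset(a,b)$; then $Q$ and $W$ are of bounded variation on $(c,d)$. Lemma \ref{L:6.2} provides the boundedness of $E_{(c,d)}:T_\mx\to\BV^\#((c,d))^n$, so composition shows $\la\mapsto E_\la f|_{(c,d)}$ is analytic with values in $\BV^\#((c,d))^n$. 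Since $u\in\BV^\#((c,d))^n$ is continuous at $x_0$ (our choice of $x_0$ ensures $u^+(x_0)=u^-(x_0)=u(x_0)$), one has $|u^\pm(x)|_1\leq|u(x_0)|_1+\Var_{u^\pm}((c,d))\leq\vertiii{u}$, so point evaluation at $x$ is a bounded linear functional on $\BV^\#((c,d))^n$; composing once more yields the analyticity of $\la\mapsto(E_\la f)(x)$.

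The subtle step is the first: lifting a class identity in $L^2(w)$ to a pointwise identity of the specific balanced representatives. This rests on the selection mechanism $v(x_0)\in N_0^\perp$ built into $E_\la$, which together with the splitting $\bb C^n=N_0\oplus N_0^\perp$ and the uniqueness part of Theorem \ref{EUIVP} pins down the representative rigidly. The analyticity part is comparatively routine: it is a composition of the standard Neumann-series analyticity of $\la\mapsto R_\la$ with the continuity of $E_{(c,d)}$ supplied by Lemma \ref{L:6.2} and a local point-evaluation bound on $\BV^\#$.
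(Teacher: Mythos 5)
Your proposal is correct and follows essentially the same route as the paper: both arguments rest on the class-level resolvent relation $R_\la-R_\mu=(\la-\mu)R_\la R_\mu$ together with the rigidity of the representative selected by $E$ (the paper invokes the linearity of $E$ on $T_\mx$ in one line, $E_\la f-E_\mu f=E(R_\la f-R_\mu f,\la(R_\la f-R_\mu f)+(\la-\mu)R_\mu f)$, whereas you unpack that uniqueness via $wh=0$, $h\in\mc L_0$, and $h(x_0)\in N_0\cap N_0^\perp=\{0\}$ -- the same mechanism). For analyticity the only difference is cosmetic: the paper first gets continuity from the boundedness of $E$ and then uses the difference quotient $((E_\la f)(x)-(E_\mu f)(x))/(\la-\mu)=(E_\la(R_\mu f))(x)$, while you obtain norm-analyticity of $\la\mapsto R_\la$ from the Neumann series and compose with $E_{(c,d)}$ and point evaluation; both hinge on Lemma \ref{L:6.2} and the resolvent relation.
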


\begin{proof}
The first claim follows after using the resolvent relation for $R_\la$ (cf. Theorem \ref{T:B.2}) in
$E_\la f-E_\mu f=E(R_\la f-R_\mu f,\la(R_\la f-R_\mu f)+(\la-\mu)R_\mu f)$.

The boundedness of the operators $E$, $R_\la$, and $R_\mu$ and the resolvent relation show that $\la\mapsto (E_\la f)(x)$ is continuous for given $f$ and $x$.
From this analyticity follows since
$$\frac{(E_\la f)(x)-(E_\mu f)(x)}{\la-\mu}=(E_\la(R_\mu f))(x)$$
has a limit as $\la$ tends to $\mu$.
\end{proof}

If $\la\in\rho(T)$ and $x\in(a,b)$ we will show that there is a function $G(x,\cdot,\la)$ satisfying $(E_\la f)(x)=\int G(x,\cdot,\la)wf$ for all $([f],[u])\in R_\la$. The function $G$ is called Green's function for $T$.

\begin{thm}
If $T$ is a self-adjoint restriction of $T_\mx$, then there exists, for given $x\in(a,b)$ and $\la\in\rho(T)$, a matrix $G(x,\cdot,\la)$ such that the columns of $G(x,\cdot,\la)^*$ are in $L^2(w)$ and
$$(E_\la f)(x)=\int G(x,\cdot,\la)wf.$$
\end{thm}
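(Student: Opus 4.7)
The plan is to apply the Riesz representation theorem componentwise to the map $[f]\mapsto (E_\la f)(x)\in\bb C^n$. Once I show that this map is a bounded linear operator from $L^2(w)$ to $\bb C^n$, the $j$-th coordinate is a bounded linear functional on $L^2(w)$, and Riesz produces a representative $h_j(x,\la)\in L^2(w)$. Stacking the $h_j^*$ as rows of an $n\times n$ matrix gives $G(x,\cdot,\la)$, and by construction the columns of $G(x,\cdot,\la)^*$ are the $h_j$, which belong to $L^2(w)$.

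The key step is the boundedness estimate. Fix $x\in(a,b)$ and $\la\in\rho(T)$, and pick a compact subinterval $[c,d]\subset(a,b)$ with $x,x_0\in(c,d)$. Because $Q$ and $W$ are of locally bounded variation on $(a,b)$, they are of bounded variation on $(c,d)$, so Lemma~\ref{L:6.2} applies and $E_{(c,d)}:T_\mx\to\BV^\#((c,d))^n$ is bounded. For any $u\in\BV^\#((c,d))^n$ and any $x\in(c,d)$, the inequality $|u(x)|_1\le|u(x_0)|_1+\Var_u((c,d))=\vertiii{u}$ shows that point evaluation at $x$ is a bounded linear map $\BV^\#((c,d))^n\to\bb C^n$. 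Since $\la\in\rho(T)$, the resolvent $R_\la$ is a bounded operator on $L^2(w)$ by Theorem~\ref{T:B.2}, so the assignment $[f]\mapsto([R_\la f],\la[R_\la f]+[f])$ is bounded from $L^2(w)$ into $T_\mx$ (equipped with the graph norm). Composing these three bounded maps yields the desired bound $|(E_\la f)(x)|_1\le C_{x,\la}\,\|[f]\|_{L^2(w)}$.

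Having established boundedness, I invoke Riesz representation in $L^2(w)$ (whose inner product is $\<[h],[f]\>=\int h^*wf$) to obtain, for each $j\in\{1,\dots,n\}$, a class $[h_j]\in L^2(w)$ with $((E_\la f)(x))_j=\int h_j^*wf$. Defining $G(x,\cdot,\la)$ to be the matrix whose $j$-th row is $h_j^*$ gives $(E_\la f)(x)=\int G(x,\cdot,\la)wf$, and the columns of $G(x,\cdot,\la)^*$ are exactly $h_1,\dots,h_n\in L^2(w)$, completing the claim.

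The main obstacle I foresee is the boundedness step: one has to be careful that the subinterval $(c,d)$ can be chosen to contain both $x_0$ and the given $x$ (always possible since $(a,b)$ is open) and that the pointwise value $u(x)$ is genuinely controlled by $\vertiii{u}$ on $\BV^\#((c,d))^n$. After that, the argument is a standard Riesz-duality construction; note that the representative $h_j$ is only determined up to an element whose pairing against $L^2(w)$ vanishes, so $G(x,\cdot,\la)$ is uniquely specified as an $L^2(w)$-object (consistent with the fact that $E_\la$ takes values in $N_0^\perp$-normalized representatives).
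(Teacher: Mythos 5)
Your proof is correct and takes essentially the same route as the paper's, which simply asserts that the components of $f\mapsto(E_\la f)(x)$ are bounded linear functionals on $L^2(w)$ and invokes Riesz's representation theorem componentwise. Your factorization of this map through $R_\la$, the operator $E_{(c,d)}$ of Lemma~\ref{L:6.2}, and point evaluation on $\BV^\#((c,d))^n$ merely supplies the boundedness argument that the paper leaves implicit.
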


\begin{proof}
For fixed $\la\in\rho(T)$ and $x\in(a,b)$, the components of $f\mapsto (E_\la f)(x)$ are bounded linear functionals on $L^2(w)$.
By Riesz's representation theorem we have $(E_\la f)(x)_j =\<G_j(x,\cdot,\la)^*,f\>$
where $G_j(x,\cdot,\la)^*$ is in $L^2(w)$.
\end{proof}

We close this section by mentioning, that the linear relation $T$ contains a linear operator $T_0$ with the same domain as $T$.
To show this define the closed space $\mc H_\infty=\{[f]\in L^2(w):([0],[f])\in T_\mx\}$ and its orthogonal complement $\mc H_0 =L^2(w)\ominus\mc{H_\infty}$.
According to Theorem \ref{T:ALR.3.1} the relation $T_0=T\cap(\mc H_0\exsum \mc H_0)$ is then a densely defined self-adjoint linear operator.
In fact, $\dom T_0=\dom T$.
$T_0$ is called the operator part of $T$.
By Theorem \ref{T:ALR.3.2} we also have that $\rho(T_0)=\rho(T)$ and that the resolvent of $T_0$ (with respect to $\mc H_0\exsum \mc H_0$) is given by $R_\la\cap(\mc H_0\exsum\mc H_0)$ when $R_\la$ denotes the resolvent of $T$.

\section{The spectral transformation} \label{sst}
\subsection{Properties of Green's function}\label{sst.1}
In this section we fix a point $\la\not\in\Lambda$ and denote, as before, the fundamental matrix for $Ju'+qu=\la wu$ which is the identity at a given point $x_0$ by $U(\cdot,\la)$.
The solutions of $Ju'+qu=\la wu$ form a vector space of dimension $n$ and they may have infinite or finite norm.
Accordingly we define $\mc L_\pm(\la)$ to be the space of solutions of $Ju'+qu=\la wu$ such that $\|\chi_{(x_0,b)}u\|<\infty$ and $\|\chi_{(a,x_0)}u\|<\infty$, respectively.
Note that the space $\mc L_0$ introduced in \eqref{170817.1} is a subset of $\mc L_\pm(\la)$ for any $\la$.
Any solution $u$ of $Ju'+qu=\la wu$ is given by $u=U(\cdot,\la)\eta$ for an appropriate $\eta\in \bb C^n$.
The sets of all $\eta$ for which $U(\cdot,\la)\eta\in \mc L_\pm(\la)$ are denoted by $N_\pm(\la)$
and the associated orthogonal projections from $\bb C^n$ by $P_\pm(\la)$, respectively.

If $T$ is a self-adjoint restriction of $T_\mx$, $\la\in\rho(T)$, and $f\in \mc L^2(w)$ we get from Lemma \ref{varconst2}, the variation of constants formula,
\begin{multline}\label{170812.1}
(E_\la f)(x)=U(x,\la)\big(u_0+J^{-1}\int_{(x_0,x)}U(\cdot,\ol)^*wf\big)\\ +\frac12 U^+(x,\la)J^{-1}U(x,\ol)^*\Delta_w(x)f(x)
\end{multline}
when $x\geq x_0$ and
\begin{multline}\label{170812.2}
(E_\la f)(x)=U(x,\la)\big(u_0-J^{-1}\int_{(x,x_0)}U(\cdot,\ol)^*wf\big)\\ -\frac12 U^-(x,\la)J^{-1}U(x,\ol)^*\Delta_w(x)f(x)
\end{multline}
when $x\leq x_0$.
Of course, we also have
$$(E_\la f)(x)=\int G(x,\cdot,\la)wf$$
and we will obtain properties of $G$ by comparing these identities.
To this end we will determine $u_0$ when the support of $f$ is compact.

Since $E_\la f\in L^2(w)$ we must then have that $u_0+J^{-1}\int_{(x_0,b)}U(\cdot,\ol)^*wf\in N_+(\la)$
and $u_0-J^{-1}\int_{(a,x_0)}U(\cdot,\ol)^*wf\in N_-(\la)$.
The first of these conditions may be written as
\begin{equation}\label{170811.1}
(\id-P_+(\la))u_0=\int(P_+(\la)-\id)J^{-1}\chi_{(x_0,b)}U(\cdot,\ol)^*wf
\end{equation}
while the second is
\begin{equation}\label{170811.2}
(\id-P_-(\la))u_0=\int(\id-P_-(\la))J^{-1}\chi_{(a,x_0)}U(\cdot,\ol)^*wf.
\end{equation}
We also know that $E_\la f$ satisfies $n_+$ boundary conditions, \ie,
$$(g_j^*JE_\la f)^-(b)-(g_j^*JE_\la f)^+(a)=0$$
for $j=1,..., n_+$.
Introduce the matrices
$$A_+(\la)=(g^*J U(\cdot,\la)P_+(\la))^-(b)$$
and
$$A_-(\la)=(g^*J U(\cdot,\la)P_-(\la))^+(a)$$
where $g=(g_1, ..., g_{n_+})$.
Then the boundary conditions can be written as
\begin{equation}\label{170811.3}
(A_+(\la)-A_-(\la))u_0 = \int(-A_-(\la)\chi_{(a,x_0)}-A_+(\la)\chi_{(x_0,b)})J^{-1}U(\cdot,\ol)^*wf.
\end{equation}
Finally recall the space $N_0=\{u(x_0):u\in \mc L_0\}\subset\bb C^n$ and the associated orthogonal projection $P$.
Since $(E_\la f)(x_0)\in N_0^\perp$ we have
\begin{equation}\label{170811.4}
(\id-P)(E_\la f)(x_0)=(\id-P)u_0=0.
\end{equation}
We may collect equations \eqref{170811.1} --- \eqref{170811.4} in a single system
\begin{equation}\label{170816.1}
F(\la)u_0=\int(b_-(\la)\chi_{(a,x_0)}+b_+(\la)\chi_{(x_0,b)})U(\cdot,\ol)^*wf
\end{equation}
for appropriate choices of the matrices $F(\la)$, $b_-(\la)$, and $b_+(\la)$.

\begin{comment}
$$F(\la)=\begin{pmatrix}\id-P_+(\la)\\ \id-P_-(\la)\\ A_+(\la)-A_-(\la)\\ \id-P\end{pmatrix},
 b_-(\la)=\begin{pmatrix}0\\ \id-P_-(\la)\\-A_-(\la)\\ 0\end{pmatrix}J^{-1},
 b_+(\la)=\begin{pmatrix}P_+(\la)-\id\\ 0\\ -A_+(\la)\\ 0\end{pmatrix}J^{-1}$$
\end{comment}

Now suppose $v_0$ is in the kernel of the matrix $F(\la)$ and let $v=U(\cdot,\la)v_0$.
Then $v$ is in $\mc L_+(\la)\cap \mc L_-(\la)$ and hence $(v,\la v)$ is a representative of an element in $D_\la$.
But $v$ also satisfies the boundary conditions, \ie, $([v],\la [v])$ is in $T\subset T^*$.
Since $\la$ is not an eigenvalue, it follows that $v\in \mc L_0$ and $v(x_0)=v_0\in N_0\cap N_0^\perp$.
Hence $v_0=0$ so that $F(\la)$ has full rank $n$ and $F^\dag(\la)=(F(\la)^*F(\la))^{-1}F(\la)^*$ is a left inverse of $F(\la)$.
Since the existence of a solution of the system \eqref{170816.1} is not in question its right-hand side is in the range of $F(\la)$.
Thus we may apply $F^\dag$ to both sides of \eqref{170816.1} to obtain
$$u_0=\int (H_-(\la)\chi_{(a,x_0)}+H_+(\la)\chi_{(x_0,b)})U(\cdot,\ol)^*wf$$
where
$$H_-(\la)=PF^\dag(\la)b_-(\la)\text{ and } H_+(\la)=PF^\dag(\la)b_+(\la).$$
Note that we were allowed to multiply with $P$ from the left, since $u_0\in N_0^\perp$.

Now introduce the functions $M_\pm(\la)=H_\pm(\la)\pm\frac12 J^{-1}$ and recall that $U^\pm(x,\la)=U(x,\la)\pm\frac12(U^+(x,\la)-U^-(x,\la))$.
Then we may rewrite equations \eqref{170812.1} and \eqref{170812.2} as
\begin{equation}\label{180108.1}
(E_\la f)(x)=\int U(x,\la) H(x,\cdot,\la) U(\cdot,\ol)^* wf
\end{equation}
where
$$H(x,\cdot,\la)=M_-(\la)\chi^\#_{(a,x_0)}+M_+(\la)\chi^\#_{(x_0,b)}-\frac12 J^{-1}\sgn(\cdot-x)+S(x,\la)\chi_{\{x\}}$$
and $S(x,\la)=\frac14U(x,\la)^{-1}(U^+(x,\la)-U^-(x,\la))J^{-1}$.
Note that $S(x,\la)=0$ at points $x$ where $Q$ and $W$ are continuous, \ie, almost everywhere.

We illustrate the above by the following simple example: $n=1$, $(a,b)=\bb R$, $J=-i$, $q=0$, $w=1$, and $x_0=0$.
Then $U(x,\la)=\e^{i\la x}$.
Assume first that $\Im(\la)>0$.
Then we have $N_+(\la)=\bb C$ and $N_-(\la)=\{0\}$.
We have no boundary conditions ($n_+=n_-=0$) and $\mc L_0=\{0\}$.
This gives $F(\la)=(0,1,0)^\top $, $b_+=(0,0,0)^\top$ and $b_-=(0,i,0)^\top$.
Since $F^\dag(\la)=(0,1,0)$ we find $H_+(\la)=0$ and $H_-(\la)=i$ and thus $M_+(\la)=M_-(\la)=i/2$.
Thus $H(x,y,\la)$ equals zero above the line $y=x$ and $i$ below it.
When $\Im(\la)<0$ we get instead $H_+(\la)=-i$, $H_-(\la)=0$ and $M_+(\la)=M_-(\la)=-i/2$.
Now $H(x,y,\la)=-i$ above the line $y=x$ and $H(x,y,\la)=0$ below it.
Thus, for $\Im\la\neq0$,
$$G(x,y,\la)=\frac{i}2\frac{\Im\la}{|\Im\la|}\e^{i\la(x-y)}\begin{cases}2&\text{if $(x-y)\Im\la>0$}\\ 1&\text{if $x=y$}\\ 0&\text{if $(x-y)\Im\la<0$.}\end{cases}$$

\subsection{The \texorpdfstring{$M$}{M}-function}\label{sst.2}
For this and the next section we need somewhat stronger assumptions than before and shall, in addition to Hypothesis \ref{H:4.1}, also require the validity of the following premise.
\begin{hyp}\label{H:6.1}
$\Lambda$ is a closed set which does not intersect the real line and contains only isolated points.
\end{hyp}

We proved in Section \ref{sde.3} that $u(x,\cdot)$ is analytic in $\bb C\setminus\Lambda$ when $u(\cdot,\la)$ is a solution of the initial value problem $Ju'+qu=\la wu$, $u(x_0)=u_0\in\bb C^n$.
Thus, if $U(\cdot,\la)$ is a fundamental matrix for $Ju'+qu=\la wu$ such that $U(x_0,\la)=\id$, then $U(x,\cdot)$ is analytic on $\bb{C}\setminus\Lambda$.
We choose, as always, $x_0$ to be a point of continuity of $Q$ and $W$, the antiderivatives of $q$ and $w$.

Our goal is to define a matrix-valued Herglotz-Nevanlinna function which encodes the spectral information of $T$, a self-adjoint restriction of $T_\mx$.
Introduce the spaces $B_\pm$ collecting respectively the vectors $\int_{(x_0,b)}U(\cdot,\ol)^*wf$ and $\int_{(a,x_0)}U(\cdot,\ol)^*wf$ when $f$ runs through the compactly supported functions in $L^2(w)$.
By Lemma~\ref{L:5.1} the span of $B_-\cup B_+$ equals $N_0^\perp$ and, mimicking its proof, we also see that $B_\pm$ are independent of $\la$.
We denote the orthogonal projections onto the spaces $B_\pm$ by $\Omega_\pm$.

\begin{lem}
If $\Omega_0$ is the orthogonal projection onto $B_+\cap B_-$, then $PM_-(\la)\Omega_0=PM_+(\la)\Omega_0$.
\end{lem}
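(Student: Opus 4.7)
The plan is to exploit the description of $B_+\cap B_-$ by constructing a single compactly supported $h\in L^2(w)$ whose image $E_\la h$ is tractable on both sides of $x_0$, and then to use the injectivity of $T-\la$ for $\la\in\rho(T)$ to force a certain homogeneous solution into $\mc L_0$.

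Given $\beta\in B_+\cap B_-$, choose, by the definition of these spaces, compactly supported $f,g\in L^2(w)$ with $\supp f\subset(x_0,b)$, $\supp g\subset(a,x_0)$, and $\int U(\cdot,\ol)^*wf=\int U(\cdot,\ol)^*wg=\beta$, and set $h=f-g$. Then
$$\int\chi_{(x_0,b)}U(\cdot,\ol)^*wh=\beta,\qquad\int\chi_{(a,x_0)}U(\cdot,\ol)^*wh=-\beta,$$
so the system \eqref{170816.1} determining $u_0=(E_\la h)(x_0)$ reduces to $F(\la)u_0=(b_+(\la)-b_-(\la))\beta$, giving $u_0=(H_+(\la)-H_-(\la))\beta$.

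Next, apply the variation of constants formulas \eqref{170812.1} and \eqref{170812.2} to $E_\la h$. For $x\in(x_0,b)$ with $x>\sup\supp h$, the jump term in \eqref{170812.1} vanishes and $\int_{(x_0,x)}U(\cdot,\ol)^*wh=\beta$, so $(E_\la h)(x)=U(x,\la)(u_0+J^{-1}\beta)$; for $x\in(a,x_0)$ with $x<\inf\supp h$, the analogous computation using \eqref{170812.2} yields the same expression (the corresponding integral now equals $-\beta$ and the sign in the formula compensates). Writing $\gamma=u_0+J^{-1}\beta$, we conclude that $E_\la h$ and $U(\cdot,\la)\gamma$ agree off a compact subset of $(a,b)$. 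Since $E_\la h\in L^2(w)$ and $U(\cdot,\la)\gamma$ is of locally bounded variation and hence bounded on compact subintervals, it follows that $U(\cdot,\la)\gamma\in L^2(w)$, i.e.\ $\gamma\in N_+(\la)\cap N_-(\la)$. Because the boundary functionals $u\mapsto(g_j^*Ju)^-(b)-(g_j^*Ju)^+(a)$ depend only on the behavior of $u$ near the endpoints, and $E_\la h$ satisfies the boundary conditions of Theorem \ref{T:5.1}, so does $U(\cdot,\la)\gamma$. Therefore $([U(\cdot,\la)\gamma],\la[U(\cdot,\la)\gamma])\in T$; since $\la\in\rho(T)$ this forces $[U(\cdot,\la)\gamma]=0$, so $U(\cdot,\la)\gamma\in\mc L_0$ and $\gamma=(U(\cdot,\la)\gamma)(x_0)\in N_0$, i.e.\ $P\gamma=0$.

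Finally, since $H_\pm(\la)=PF^\dag(\la)b_\pm(\la)$ satisfies $PH_\pm=H_\pm$ and since $u_0\in N_0^\perp$ gives $Pu_0=u_0$, we compute
$$P(M_+(\la)-M_-(\la))\beta=P(H_+(\la)-H_-(\la))\beta+PJ^{-1}\beta=Pu_0+PJ^{-1}\beta=P\gamma=0.$$
As $\beta\in B_+\cap B_-$ was arbitrary, this yields $PM_-(\la)\Omega_0=PM_+(\la)\Omega_0$. The most delicate step will be justifying the transfer of the boundary conditions from $E_\la h$ to $U(\cdot,\la)\gamma$, which rests on observing that the functionals appearing in the boundary conditions depend only on the tail behavior of $u$ at $a$ and $b$ and so are preserved under modifications of $u$ on any compact subset of $(a,b)$.
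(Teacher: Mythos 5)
Your proof is correct, but it follows a genuinely different route from the paper's. The paper derives the identity from the adjointness relation $\<g,E_\la f\>=\<E_{\ol}g,f\>$ (i.e.\ $R_\la^*=R_{\ol}$): together with $U^\pm(\cdot,\la)J^{-1}U^\pm(\cdot,\ol)^*=J^{-1}$ this yields four bilinear identities relating $M_\pm(\la)$ and $M_\pm(\ol)^*$ on $B_\pm$ (equations \eqref{171230.1}--\eqref{171230.4}), and for $\zeta\in B_+\cap B_-$ these combine to show that $(M_+(\la)-M_-(\la))\zeta$ is orthogonal to $B_-\cup B_+$, hence annihilated by $P$ since $\span(B_-\cup B_+)=N_0^\perp$. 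You instead realize a given $\zeta=\beta\in B_+\cap B_-$ by a single compactly supported $h=f-g$ whose one-sided transforms are $\pm\beta$, observe that $E_\la h$ coincides with the homogeneous solution $U(\cdot,\la)\gamma$, $\gamma=(H_+(\la)-H_-(\la))\beta+J^{-1}\beta$, outside a compact set, and use the injectivity of $T-\la$ to force $U(\cdot,\la)\gamma\in\mc L_0$, i.e.\ $P\gamma=0$. Your "delicate step" --- transferring the boundary conditions from $E_\la h$ to $U(\cdot,\la)\gamma$ because the functionals $(g_j^*Ju)^-(b)-(g_j^*Ju)^+(a)$ depend only on tail behavior and are representative-independent by Lagrange's identity --- is sound, and is in fact the same device the paper deploys later in the proof of Lemma \ref{T:6.2} to show that $h_1(\cdot,\la)$ satisfies the boundary conditions. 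The trade-off: the paper's computation produces the identities \eqref{171230.1}--\eqref{171230.4} as reusable by-products (they are needed again to prove $M(\la)=M(\ol)^*$), whereas your argument is more geometric and self-contained, making transparent that the lemma is ultimately a consequence of $\la$ not being an eigenvalue of $T$.
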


\begin{proof}
Lemma~\ref{T4.1.4a} gives $U^\pm(\cdot,\la)J^{-1}U^\pm(\cdot,\ol)^*=J^{-1}$ so that $U(x,\la)(S(x,\la)-S(x,\ol)^*)U(x,\ol)^*=0$.
This implies
$$0=\<g,E_\la f\>-\<E_\ol g,f\>=\int\int g(x)^*w(x) U(x,\la)K(x,y,\la)U(y,\ol)^*w(y)f(y)$$
where $K(x,y,\la)$ equals
$$M_-(\la)\chi^\#_{(a,x_0)}(y)+M_+(\la)\chi^\#_{(x_0,b)}(y)-M_-(\ol)^*\chi^\#_{(a,x_0)}(x)-M_+(\ol)^*\chi^\#_{(x_0,b)}(x).$$
By picking $f$ and $g$ both to be supported on $(a,x_0]$ we may prove
\begin{equation}\label{171230.1}
\alpha^*(M_-(\la)-M_-(\ol)^*)\alpha'=0
\end{equation}
for all $\alpha,\alpha'\in B_-$.
Similarly, choosing supports of both $f$ and $g$ in $[x_0,b)$ we find
\begin{equation}\label{171230.2}
\beta^*(M_+(\la)-M_+(\ol)^*)\beta'=0
\end{equation}
for all $\beta,\beta'\in B_+$.
We also get
\begin{equation}\label{171230.3}
\alpha^*(M_+(\la)-M_-(\ol)^*)\beta=0
\end{equation}
and
\begin{equation}\label{171230.4}
\beta^*(M_-(\la)-M_+(\ol)^*)\alpha=0
\end{equation}
whenever $\alpha\in B_-$ and $\beta\in B_+$.

Now suppose $\zeta\in B_-\cap B_+$.
Then the identities \eqref{171230.1} and \eqref{171230.3} show that $\alpha^*(M_+(\la)-M_-(\la))\zeta=0$ for all $\alpha\in B_-$.
The other two give $\beta^*(M_+(\la)-M_-(\la))\zeta=0$ for all $\beta\in B_+$.
\end{proof}

We may now make the following definition when $\la$ is in $\rho(T)\setminus\Lambda$.
$$M(\la)\zeta=\begin{cases}
  PM_+(\la)\zeta&\text{if $\zeta\in B_+$,}\\   PM_-(\la)\zeta&\text{if $\zeta\in B_-$, and}\\   0&\text{if $\zeta\in N_0$.}\end{cases}$$
Of course, elsewhere $M(\la)$ is defined by linearity.
Since we may replace the term $U(\cdot,\ol)^*w$ in equation \eqref{180108.1} by $PU(\cdot,\ol)^*w$ and since $M_\pm=PM_\pm\pm\frac12(\id-P)J^{-1}$, we may replace $H$ in that equation by $\tilde H=HP$ and still retain the fact that it yields Green's function.
Thus, setting
\begin{multline*}
\tilde H(x,y,\la)=M(\la)+\frac12(\id-P)J^{-1}P \sgn(y-x_0)\\
 -\frac12 J^{-1}P\sgn(y-x)+S(x,\la)P\chi_{\{x\}}(y)
\end{multline*}
we still get
\begin{equation}\label{180108.2}
(E_\la f)(x)=\int U(x,\la) \tilde H(x,\cdot,\la) U(\cdot,\ol)^* wf.
\end{equation}

\begin{lem}\label{T:6.2}
The function $M$ extends to all of $\rho(T)$ and has the following properties:
(1) $M(\la)=M(\ol)^*$.
(2) $(M(\la)-M(\la)^*)/(2i \Im(\la))\geq0$ if $\Im(\la)\neq0$.
(3)~$M$ is analytic.
\end{lem}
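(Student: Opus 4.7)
The plan is to derive all three properties from the analogous properties of the resolvent $R_\la$, combined with the Green's function representation \eqref{180108.2} and the identities \eqref{171230.1}--\eqref{171230.4} from the preceding lemma.

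For (1), since $B_\pm \subset N_0^\perp = \ran P$, any $\alpha,\alpha'\in B_-$ satisfy $\<\alpha, M(\la)\alpha'\> = \alpha^* M_-(\la)\alpha'$ and $\<M(\ol)\alpha,\alpha'\> = \alpha^* M_-(\ol)^*\alpha'$, so identity \eqref{171230.1} forces their equality. The analogous identities cover $B_+$--$B_+$ and the mixed pairings, and when either argument lies in $N_0$ both sides vanish, because $M(\ol)$ maps into $\ran P$ while $M(\la)$ kills $N_0$ by definition. Extending by linearity yields $M(\la)=M(\ol)^*$ on all of $\bb C^n$.

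For (2), I would start from the first resolvent identity (Theorem \ref{T:B.2}), which gives $\<f,(R_\la-R_\la^*)f\>/(2i\Im(\la))=\|R_\ol f\|^2\geq 0$. To translate through Green's function, I choose a compactly supported $f\in\mc L^2(w)$ supported entirely in one of $(a,x_0)$ or $(x_0,b)$, so that $\alpha=\int U(\cdot,\ol)^*wf$ lies in the corresponding $B_-$ or $B_+$. Expanding $\<f,E_\la f\>-\<f,E_\ol f\>$ using \eqref{180108.2} for both $\la$ and $\ol$, the $\sgn$ terms and the point-mass terms $S(x,\la)$ cancel by precisely the mechanism of the preceding lemma (via $(J^{-1})^*=-J^{-1}$ and $U(x,\la)(S(x,\la)-S(x,\ol)^*)U(x,\ol)^*=0$), leaving only $\alpha^*(M(\la)-M(\la)^*)\alpha$. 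Since $B_+\cup B_-$ spans $N_0^\perp$ and $M$ vanishes on $N_0$, a polarization argument promotes this diagonal inequality to an inequality on all of $\bb C^n$.

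For (3), on $\rho(T)\setminus\Lambda$ analyticity follows from Theorem \ref{T:2.3.1}, as the matrices $F(\la)$ and $b_\pm(\la)$ are built from $U(\cdot,\la)$ and $U(\cdot,\ol)$ and are therefore analytic, whence $F^\dag(\la)=(F(\la)^*F(\la))^{-1}F(\la)^*$ and $M_\pm(\la)$ are analytic as well. By Hypothesis \ref{H:6.1} the set $\Lambda\cap\rho(T)$ consists of isolated points, so to extend across them I would characterize $M$ via the resolvent: writing $\alpha=\int U(\cdot,\ol)^*wg$ and $\beta=\int U(\cdot,\ol)^*wf$ for $\alpha,\beta\in B_\pm$, the scalar $\alpha^*M(\la)\beta$ is read off from $\<g,R_\la f\>$ after subtracting the $\sgn$ and $S$ contributions, each of which stays analytic and locally bounded as $\la$ approaches a point of $\Lambda$. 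Since $\la\mapsto\<g,R_\la f\>$ is analytic on all of $\rho(T)$, Riemann's removable singularity theorem then supplies an analytic extension of $M$ to all of $\rho(T)$.

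The principal obstacle in (2) and (3) is the clean isolation of the "$M$-part" of $\tilde H$ from the $\sgn$ and $S$ contributions, in a setting where $\tilde H$ is neither symmetric in its arguments nor analytic in $\la$ piece by piece. The cancellation worked out in the preceding lemma is tailor-made for this, so the technical task is to verify that the same mechanism survives when $\<E_\ol g,f\>$ is replaced by $\<f,E_\ol f\>$ in the Herglotz calculation, and when one passes from pointwise analyticity on $\rho(T)\setminus\Lambda$ to analyticity across the isolated points of $\Lambda$.
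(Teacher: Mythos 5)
Your treatment of (1) coincides with the paper's, but (2) and (3) each contain a genuine gap. For (2): the cancellation mechanism of the preceding lemma applies to the combination $\<g,E_\la f\>-\<E_\ol g,f\>$, which vanishes identically precisely because both terms carry the \emph{same} sandwich $U(x,\la)(\cdots)U(y,\ol)^*$; that is what yields \eqref{171230.1}--\eqref{171230.4}. The Herglotz combination $\<f,E_\la f\>-\<f,E_\ol f\>$ you propose does not have this structure: by \eqref{180108.2} the second term is sandwiched as $U(x,\ol)(\cdots)U(y,\la)^*$, so after expansion you are comparing $\alpha^*M(\la)\alpha$ with $\tilde\alpha^*M(\ol)\tilde\alpha$, where $\alpha=\int U(\cdot,\ol)^*wf$ but $\tilde\alpha=\int U(\cdot,\la)^*wf$ are different vectors (for $\Im\la\neq0$). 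The difference is not $\alpha^*(M(\la)-M(\la)^*)\alpha$, and polarization cannot repair the mismatch. The paper takes a different route: it evaluates $E_\la f$ outside $\supp f$ to identify the Weyl-type solution $h_1(x,\la)=U(x,\la)(M(\la)+\tfrac12J^{-1}P\sgn(x-x_0))$, shows that $h_1(\cdot,\la)$ lies in $\mc L^2(w)$ and satisfies the boundary conditions defining $T$, derives the key identity \eqref{180103.1}, namely $M(\la)-M(\mu)=(\la-\mu)(E_{\ov\mu}h_1(\cdot,\ol))(x_0)^*$, and then obtains (2) from $(E_{\la}h_1(\cdot,\ol))(x_0)=\int h_1(\cdot,\ol)^*wh_1(\cdot,\ol)\geq0$.

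For (3): the pseudo-inverse $F^\dag(\la)=(F(\la)^*F(\la))^{-1}F(\la)^*$ involves $F(\la)^*$, whose entries are complex conjugates of functions of $\la$ and hence anti-holomorphic, so analyticity of $F$ would not give analyticity of $F^\dag$ or of $M_\pm$; moreover the analyticity of the projections $P_\pm(\la)$ and of $A_\pm(\la)$ entering $F(\la)$ and $b_\pm(\la)$ is never established in the paper. The paper instead reads analyticity on $\rho(T)\setminus\Lambda$ directly off the difference quotient furnished by \eqref{180103.1}. Finally, invoking Riemann's removable singularity theorem at the isolated points of $\Lambda\cap\rho(T)$ requires local boundedness of $M$ itself, which does not follow merely from analyticity of $\la\mapsto\<g,R_\la f\>$ (extracting $M$ from these pairings requires inverting against $(\mc Ff)(\la)$, which may degenerate at such points); the paper obtains removability from the Herglotz property already proved in (2), via Lemma \ref{L:6.3}.
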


\begin{proof}
We first establish the above statements in $\rho(T)\setminus\Lambda$.

For $\xi\in\bb C^n$ and $\zeta\in B_-$ we have $\xi^*M(\la)\zeta=\xi^*PM_-(\la)\zeta=(P\xi)^*M_-(\la)\zeta$.
But $P\xi=\alpha+\beta$ with $\alpha\in B_-$ and $\beta\in B_+$.
Thus, on account of the identities \eqref{171230.1} and \eqref{171230.4} and since $\zeta=P\zeta$, we get
\begin{multline*}
\xi^*M(\la)\zeta=\alpha^*M_-(\ol)^*\zeta+\beta^*M_+(\ol)^*\zeta=(PM_-(\ol)\alpha)^*\zeta+(P M_+(\ol)\beta)^*\zeta\\
 =(\alpha+\beta)^*M(\ol)^*\zeta=\xi^* (M(\ol)P)^*\zeta.
\end{multline*}
Now use that $MP=M$.
A similar proof works when $\zeta\in B_+$.
If $\zeta\in N_0$ we have $\xi^*M(\la)\zeta=0$ and $\xi^*M(\ol)^*\zeta=(M(\ol)\xi)^*\zeta$.
The latter is also zero since $M(\ol)\xi\in N_0^\perp$.
This establishes (1) for $\la\in\rho(T)\setminus\Lambda$.

Suppose $\supp f\in[c,d]\subset(a,b)$ and denote by $\xi_\pm$ the integrals of $U(\cdot,\ol)^*wf$ over the intervals $[x_0,b)$ and $(a,x_0]$, respectively. For $x$ outside $[c,d]$ we get
$$(E_\la f)(x)=U(x,\la) (M(\la)\mp\frac12J^{-1}P)(\xi_++\xi_-) +\frac12 U(x,\la)(\id-P)J^{-1}P (\xi_+-\xi_-)$$
when we use the upper sign for $x<c$ and the lower sign for $x>d$.
The columns of $U(x,\la)(\id-P)$ are in $\mc L_0\subset\mc L^2(w)$ and thus satisfy the boundary conditions defining $T$.
Since $E_\la f\in \mc L^2(w)$ also satisfies these boundary conditions it follows that the function
$$h_1(x,\la)=U(x,\la)(M(\la)+\frac12J^{-1}P\sgn(x-x_0))$$
is in $\mc L^2(w)$ and satisfies the boundary conditions.
Now let $h=h_1(\cdot,\ol)-h_1(\cdot,\ov\mu)$.
Since $h$ is continuous at $x_0$ it satisfies the differential equation $Jh'+(q-\ov\mu w)h=(\ol-\ov\mu)wh_1(\cdot,\ol)$ on all of $(a,b)$.
From the above we know that $h\in L^2(w)$ satisfies the boundary conditions for $T$.
Thus $(h,\ov\mu h+(\ol-\ov\mu)h_1(\cdot,\ol))\in \mc T$.
This is the same as to say that $h$ is a representative of $(\ol-\ov\mu)R_{\ov\mu}h_1(\cdot,\ol)$.
Since $h(x_0)=M(\ol)-M(\ov\mu)\in N_0^\perp$ we even have
$$h=h_1(\cdot,\ol)-h_1(\cdot,\ov\mu)=(\ol-\ov\mu)E_{\ov\mu}h_1(\cdot,\ol).$$
Taking the adjoint and evaluating at $x_0$ gives
\begin{equation}\label{180103.1}
M(\la)-M(\mu)=(\la-\mu)(E_{\ov\mu}h_1(\cdot,\ol))(x_0)^*.
\end{equation}

Now we are in a position to prove (2) away from $\Lambda$.
We simply note that, by (1), $M(\la)^*=M(\ol)$ so that
$$M(\la)-M(\la)^*=(\la-\ol)(E_{\la}h_1(\cdot,\ol))(x_0)^*.$$
However, evaluating equation \eqref{180108.2} at $x_0$ gives $(E_\la f)(x_0)=\int h_1(\cdot,\ol)^*wf$ and, in particular,
$$(E_{\la}h_1(\cdot,\ol))(x_0)=\int h_1(\cdot,\ol)^*wh_1(\cdot,\ol)\geq0.$$

To prove (3) outside $\Lambda$ suppose that $\mu\in\rho(T)\setminus\Lambda$ and that $\la$ is in a ball around $\mu$ which still lies in $\rho(T)\setminus\Lambda$.
Equation \eqref{180103.1} shows then that $M$ is analytic at $\mu$.

Finally, using Hypothesis \ref{H:6.1} and the next Lemma \ref{L:6.3}, we may extend $M$ analytically to all of $\rho(T)$.
Properties (1) and (2) remain intact by continuity.
\end{proof}

\begin{lem}\label{L:6.3}
Let $\la_0$ be a non-real isolated singularity of a function $m$ which is analytic in a punctured disk about $\la_0$.
If $\Im m(\la)\geq0$ on its domain of definition, then $\la_0$ is a removable singularity.
\end{lem}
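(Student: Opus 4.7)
The plan is to reduce to Riemann's removable singularity theorem via the Cayley transform $\phi(w)=(w-i)/(w+i)$, which maps the open upper half plane biholomorphically onto the open unit disk. The hypothesis that $\la_0$ is non-real matters only to guarantee that a small enough punctured disk $D^\ast$ about $\la_0$, on which $m$ is analytic, can be chosen to lie entirely in one open half plane, so that the sign condition $\Im m\geq 0$ is meaningful on all of $D^\ast$.

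First I would dispose of the possibility that $m$ is constant on $D^\ast$; there the conclusion is trivial. Assuming $m$ non-constant, the minimum principle applied to the harmonic function $\Im m$ on the connected open set $D^\ast$ forbids $\Im m$ from attaining the infimum value $0$ at any interior point unless $\Im m\equiv 0$. But $\Im m\equiv 0$ would make $m$ a real-valued analytic function, hence constant, contradicting the non-constancy assumption. So $\Im m>0$ strictly on $D^\ast$, and $m$ takes values in the open upper half plane.

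Next, $F=\phi\circ m$ is analytic on $D^\ast$ and satisfies $|F|<1$ throughout, hence $F$ is bounded near $\la_0$. By Riemann's removable singularity theorem, $F$ extends analytically to the full disk with $|F|\leq 1$. A maximum modulus argument now rules out $|F(\la_0)|=1$: otherwise $F$ would attain its modulus maximum at the interior point $\la_0$ and be forced to be a unimodular constant on the entire disk, contradicting $|F|<1$ on the nonempty open set $D^\ast$. Therefore $F(\la_0)\neq 1$, and the inverse Cayley transform formula $m=i(1+F)/(1-F)$ provides an analytic extension of $m$ across $\la_0$.

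The only step requiring genuine care is the opening dichotomy, and it is handled cleanly by the minimum principle for harmonic functions; without it, a pole of $m$ at $\la_0$ (which would correspond to $F(\la_0)=1$) would not be ruled out.
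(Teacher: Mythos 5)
Your proof is correct, but it follows a genuinely different route from the paper's. The paper disposes of the lemma by contradiction using the trichotomy of isolated singularities: if $\la_0$ were a pole, $m$ would assume every value of sufficiently large modulus in each punctured neighborhood, and if it were an essential singularity, Casorati--Weierstrass would make the image dense in $\bb C$; in either case $\Im m$ would have to become negative somewhere near $\la_0$, contradicting the hypothesis, so the singularity must be removable. You instead transport the problem to the unit disk via the Cayley transform and apply Riemann's removable singularity theorem to the bounded function $F=\phi\circ m$, then exclude $|F(\la_0)|=1$ by the maximum modulus principle. This costs two extra steps the paper does not need --- the minimum-principle argument upgrading $\Im m\geq 0$ to $\Im m>0$ (which, incidentally, you could skip: $|F|\leq 1$ already suffices for Riemann's theorem, and the maximum-modulus step only needs the constant/non-constant dichotomy you have already set up), and the verification that $F(\la_0)\neq 1$ --- but both are carried out correctly, and in exchange you avoid Casorati--Weierstrass entirely and produce the analytic continuation explicitly as $i(1+F)/(1-F)$ rather than merely asserting its existence. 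Both arguments are complete proofs of the lemma.
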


\begin{proof}
If $\la_0$ is a pole or an essential singularity of $m$, then there must be points in a neighborhood of $\la_0$ such that $\Im(\la)<0$.
\end{proof}

Lemma \ref{T:6.2} guarantees that $M$ is a matrix-valued Herglotz-Nevanlinna function.
It is well-known, that any such function has the representation
$$M(\la)=A\la +B+\int \big(\frac1{t-\la}-\frac{t}{t^2+1}\big)dN(t)$$
where $A$ is a non-negative matrix, $B$ a Hermitian matrix and $t\mapsto N(t)$ a non-decreasing, left-continuous, matrix-valued function.
We will denote the derivative of $N$, a non-negative distribution or measure by $\nu$.
Recall that the entries of $\nu$ are absolutely continuous with respect to $\tr\nu$ and let $\tilde N$ be the Radon-Nikodym derivative of $\nu$ with respect to $\tr\nu$.
Of course, associated with the measure $\nu$ is the Hilbert space $L^2(\nu)$.
We shall denote the inner product and norm in $L^2(\nu)$ by $\<\cdot,\cdot\>_\nu$ and $\|\cdot\|_\nu$, respectively.
For emphasis we subsequently append a subscript $w$ for norms and inner products in $L^2(w)$ when appropriate.

\subsection{The spectral transformation}\label{sst.3}
Throughout this section we require the validity of both Hypotheses \ref{H:4.1} and \ref{H:6.1}.
In the course of action we follow the proof of Theorem 15.5 in Bennewitz \cite{Bennewitz-sths} almost verbatim.
Extra attention is needed in the proof of Lemma \ref{L:6.8} to tend to differences stemming from the absence of the definiteness condition.
We include the other details since \cite{Bennewitz-sths} is perhaps not readily available.

If $f\in L^2(w)$ has compact support, we define for any $\la\in\bb C\setminus\Lambda$
$$(\mc Ff)(\la)=\int U(\cdot,\ol)^*wf.$$
$\mc Ff$ is called the spectral transform or Fourier transform of $f$.
Note that $\la\mapsto(\mc Ff)(\la)$ and $\la\mapsto(\mc Ff)(\ol)^*$ are analytic in $\bb C\setminus\Lambda$.
According to Lemma \ref{L:5.1} the vectors $(\mc F f)(\la)$ always lie in $N_0^\perp=\ran P$.
We will mostly be concerned with the restrictions of transforms to the real line.
We will still use the notation $\mc F f$ for such a restriction, since the meaning will be clear from the context.

The following lemma indicates that all spectral information of the self-adjoint relation $T$ is encoded in the function $M$ since it is encoded in the resolvent of $T$.
\begin{lem}\label{L:6.5}
Suppose $f,g\in L^2(w)$ are compactly supported.
Then the function $\la\mapsto \<g,E_\la f\>-(\mc Fg)(\ol)^* M(\la)(\mc Ff)(\la)$ is analytic in $\bb C\setminus\Lambda$.
\end{lem}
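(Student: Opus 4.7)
My plan is to substitute the representation \eqref{180108.2} for $E_\la f$ into $\<g,E_\la f\>_w$ and check that, among the four summands of $\tilde H(x,y,\la)$, exactly one produces the bilinear form $(\mathcal F g)(\ol)^* M(\la)(\mathcal F f)(\la)$, another is killed by a general identity, and the remaining two yield a quantity manifestly analytic in $\la\in\bb C\setminus\Lambda$.

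Concretely, Fubini (legitimate since $f,g$ have compact support and $w$ is locally finite) gives
$$\<g,E_\la f\>_w=\int\!\!\int g(x)^* w(x)\,U(x,\la)\,\tilde H(x,y,\la)\,U(y,\ol)^* w(y) f(y).$$
The $M(\la)$ piece of $\tilde H$ is constant in $x,y$; using $(\mathcal F g)(\ol)^*=\int g^* w U(\cdot,\la)$ and $(\mathcal F f)(\la)=\int U(\cdot,\ol)^* wf$ it factors out to give exactly the quantity being subtracted. The $\tfrac12(\id-P)J^{-1}P\sgn(y-x_0)$ piece is annihilated by the outer $x$-integration via the identity $wU(\cdot,\la)(\id-P)=0$ (recalled just before Lemma \ref{L:5.1}), so it contributes nothing. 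What remains is the sum of a genuine double integral built from the $-\tfrac12 J^{-1}P\sgn(y-x)$ summand and a discrete sum stemming from $S(x,\la)P\chi_{\{x\}}(y)$ over the atoms of $w$ inside $\supp f\cup\supp g$; the $\la$-dependence of both enters only through $U(x,\la)$, $U(y,\ol)^*$, and $S(x,\la)$, each of which is analytic in $\la\in\bb C\setminus\Lambda$ for fixed $x,y$ by Theorem \ref{T:2.3.1} together with $\Lambda=\ol\Lambda$.

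The main work is to promote this pointwise-in-$(x,y)$ analyticity to analyticity of the integrated remainder. A Gronwall-type estimate applied to $U'=J^{-1}(\la w-q)U$ on compact $x$-subintervals, with $\la$ in a compact subset of $\bb C\setminus\Lambda$, yields the joint local bound on $|U(x,\la)|$, $|U(y,\ol)^*|$, and $|S(x,\la)|$ that is needed. Granting this, Morera's theorem---swapping any triangle-contour integral in $\la$ with the $w\otimes w$ integration over $\supp g\times\supp f$ via Fubini---delivers analyticity of the double-integral piece, while Cauchy--Schwarz (using $|\Delta_w(x)|\leq\Var_W(\supp f\cup\supp g)$ and $f,g\in L^2(w)$) shows that the atomic sum converges locally uniformly, hence analytically, in $\la$. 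The principal obstacle is therefore this joint local bound; once in hand, the analyticity of the remainder, and so of $\<g,E_\la f\>_w-(\mathcal F g)(\ol)^* M(\la)(\mathcal F f)(\la)$, follows at once.
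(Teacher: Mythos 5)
Your proposal is correct and follows essentially the same route as the paper: substitute the representation \eqref{180108.2}, observe that the $M(\la)$ summand of $\tilde H$ factors out as $(\mc Fg)(\ol)^*M(\la)(\mc Ff)(\la)$, and note that the remaining summands of $\tilde H(x,y,\la)-M(\la)$ depend analytically on $\la$ so that the compactly supported double integral is analytic. The paper states this in one line; your extra observations (the $(\id-P)$ term dying via $wU(\cdot,\la)(\id-P)=0$, and the Morera/Fubini justification with locally uniform bounds on $U$) are correct refinements of the same argument rather than a different one.
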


\begin{proof}
In view of the representation \eqref{180108.2} for $E_\la f$, the claim follows simply from the fact that $\tilde H(x,y,\la)-M(\la)=\frac12(\id-P)J^{-1}P \sgn(y-x_0)-\frac12 J^{-1}P\sgn(y-x)+S(x,\la)\chi_{\{x\}}(y)$  is analytic in $\bb C\setminus\Lambda$.
\end{proof}

We now recall from Appendix \ref{ALR.4} the spectral projections $\pi(B)$ associated with the self-adjoint operator $T_0$ and, by extension, with the self-adjoint relation $T$.
In particular, remember that $\Pim_{f,g}$ is the anti-derivative of the distribution associated with the complex measure $B\mapsto \<f,\pim(B)g\>$
and that $\pim(\bb R)$ is the orthogonal projection from $L^2(w)$ onto $\mc H_0$.
We will denote $\pim(\bb R)$ by $\bb P$.

\begin{lem}\label{L:6.5a}
Suppose $f\in L^2(w)$ is compactly supported and $c$ and $d$ are points of continuity of both $\Pim_{f,f}$ and $N$.
Let $\Gamma$ be the contour described by the rectangle with vertices $c\pm i\varepsilon$ and $d\pm i\varepsilon$ where $\varepsilon>0$ is chosen so that the rectangle and its interior lie entirely in $\bb C\setminus\Lambda$ (recall that $\bb R\cap\Lambda$ is empty).
Then
\begin{equation}\label{180125.1}
\oint_\Gamma(\mc Ff)(\ol)^* M(\la)(\mc Ff)(\la)d\la=-2\pi i \int_{(c,d)} (\mc Ff)^*\nu(\mc Ff)
\end{equation}
and
\begin{equation}\label{180125.2}
\oint_\Gamma \<f,R_\la f\>d\la=-2\pi i \int_{(c,d)} d\Pim_{f,f}.
\end{equation}
\end{lem}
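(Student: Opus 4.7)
Both identities will follow by inserting an integral representation of the integrand and then reducing the $\la$-integral to a residue computation for $\la\mapsto(t-\la)^{-1}$ on $\Gamma$.  The basic fact to be used repeatedly is that, for real $t$ and counter-clockwise $\Gamma$,
\[
\oint_\Gamma\frac{d\la}{t-\la}=\begin{cases}-2\pi i&\text{if }t\in(c,d),\\[2pt] 0&\text{if }t\notin[c,d],\end{cases}
\]
which produces the factor $-2\pi i$ on each right-hand side.

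For \eqref{180125.1}, I would substitute the Herglotz-Nevanlinna representation of $M$ recorded just after Lemma \ref{T:6.2}.  The linear term $A\la+B$ and the summand $-t/(t^2+1)$ give integrands that are analytic in $\la$ throughout the closed region bounded by $\Gamma$, which by Hypothesis \ref{H:6.1} and the choice of $\varepsilon$ lies inside $\bb C\setminus\Lambda$.  Using the analyticity of $(\mc Ff)(\la)$ and $(\mc Ff)(\ol)^*$ on $\bb C\setminus\Lambda$ (Section \ref{sde.3}), Cauchy's theorem kills these two contributions.  What remains, after swapping the $\la$- and $t$-integrals, is
\[
\int\Bigl(\oint_\Gamma\frac{d\la}{t-\la}\Bigr)\,(\mc Ff)(t)^*\,dN(t)\,(\mc Ff)(t)=-2\pi i\int_{(c,d)}(\mc Ff)^*\nu(\mc Ff),
\]
using $(\mc Ff)(\bar t)^*=(\mc Ff)(t)^*$ for real $t$.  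The hypothesis of continuity of $N$ at the endpoints makes them negligible.

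For \eqref{180125.2}, I would appeal to the spectral representation of the resolvent of the operator part $T_0$ provided by Appendix \ref{ALR.4}.  Since the complex measure $B\mapsto\<f,\pim(B)f\>$ has antiderivative $\Pim_{f,f}$, and since for $\la\in\rho(T)$ one has $\<f,R_\la f\>=\int(t-\la)^{-1}d\Pim_{f,f}(t)$, the identity \eqref{180125.2} drops out via the same residue calculation, with continuity of $\Pim_{f,f}$ at $c$ and $d$ again handling the endpoints.

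The main technical point will be justifying Fubini in both calculations.  On the horizontal sides of $\Gamma$ the kernel $|t-\la|^{-1}$ is uniformly bounded by $1/\varepsilon$; on each vertical segment $\la=c+is$ (or $d+is$), $s\in[-\varepsilon,\varepsilon]$, the iterated absolute integral $\int_{-\varepsilon}^\varepsilon|t-\la|^{-1}ds$ produces only a logarithmic singularity at $t=c$ (resp.\ $t=d$).  Uniform boundedness of the analytic functions $\mc Ff$ on the compact contour $\Gamma$, finiteness of the measure $(1+t^2)^{-1}dN$ coming from the Herglotz representation of $M$ and of $d\Pim_{f,f}$, together with the absence of atoms of $N$ and $\Pim_{f,f}$ at $c$ and $d$, are enough to make these logarithmic singularities integrable against the respective measures.
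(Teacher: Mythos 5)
Your overall route coincides with the paper's: insert the Herglotz--Nevanlinna representation of $M$ (respectively the spectral representation $\<f,R_\la f\>=\int(t-\la)^{-1}\,d\Pim_{f,f}(t)$ from Theorem \ref{T:B.8}), dispose of the analytic terms by Cauchy's theorem, interchange the two integrals, and evaluate $\oint_\Gamma(t-\la)^{-1}\,d\la$ by residues. The one step that does not hold as you state it is the justification of the interchange near the corners $t=c$ and $t=d$. After integrating $|t-\la|^{-1}$ over a vertical side of $\Gamma$ one is left, as you say, with a logarithmic singularity, and you claim that the absence of atoms of $N$ (resp.\ $\Pim_{f,f}$) at $c$ and $d$ makes $\int\log\frac1{|t-c|}\,dN(t)$ finite. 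That implication is false: the non-decreasing function defined by $N(t)=1/\log(e/(t-c))$ for $t>c$ and $N(t)=0$ for $t\le c$ is continuous at $c$ (no atom), yet $\int_{(c,c+1)}\log\frac1{t-c}\,dN(t)=\infty$. Mere continuity of $N$ and $\Pim_{f,f}$ at $c$ and $d$ therefore does not deliver the absolute convergence your Fubini argument needs.

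The paper's proof handles exactly this point by a different estimate: it reduces the question to integrals of the form $\int_{-1}^{1}\int_{(-1,1)}\frac{d\sigma(t)}{\sqrt{t^2+s^2}}\,ds$ with $\sigma$ non-decreasing and \emph{differentiable} at $0$, and an integration by parts --- which replaces the logarithmic kernel acting on $d\sigma$ by the bounded difference quotient $(\sigma(t)-\sigma(0))/t$ acting on an integrable kernel --- then yields finiteness. In other words, the argument really uses that $c$ and $d$ are points of differentiability of $N$ and $\Pim_{f,f}$, not merely points of continuity; since monotone functions are differentiable off a Lebesgue null set, such points are still dense, which is all that is needed when the lemma is invoked in the proof of Lemma \ref{L:6.6}. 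To close the gap in your write-up you should either impose this differentiability at $c$ and $d$ and carry out the integration-by-parts estimate, or otherwise control the modulus of continuity of $N$ and $\Pim_{f,f}$ at the two corner points.
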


\begin{proof}
Using the Nevanlinna representation for $M$, exchanging the order of integration, and employing Cauchy's integral formula we obtain for the right-hand side of equation \eqref{180125.1}
$$\int \oint_\Gamma (\mc Ff)(\ol)^* \big(\frac1{t-\la}-\frac{t}{t^2+1}\big)\tilde N(t)(\mc Ff)(\la)d\la\tr\nu(t)=-2\pi i \int_{(c,d)} (\mc Ff)^*\nu(\mc Ff).$$
Exchanging the order of integration is permissible, if the integral is absolutely convergent.
A difficulty arises when $\la=t$ leading to integrals of the type
$$\int_{-1}^{1}\int_{(-1,1)} \frac{\sigma'(t)}{\sqrt{t^2+s^2}} ds$$
when $\sigma$ is non-decreasing on $(-1,1)$ and differentiable at $0$.
But integration by parts shows that such integrals are finite.
This proves equation \eqref{180125.1}.
The proof of equation \eqref{180125.2} is similar.
\end{proof}

\begin{lem}\label{L:6.6}
When $f\in L^2(w)$ is compactly supported, then $\mc Ff$ is in $L^2(\nu)$.
The map $\mc F$ extends, by continuity, to all of $L^2(w)$.
Moreover,
$$\Pim_{f,g}(t)=\<f,\pim((-\infty,t))g\>_w=\int_{(-\infty,t)} (\mc F f)^* \nu (\mc Fg)$$
whenever $f,g\in L^2(w)$.
In particular, $\<f,\bb Pg\>_w=\<\mc F f,\mc F g\>_\nu$ and $\ker\mc F=\mc H_\infty$.
\end{lem}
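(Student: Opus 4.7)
The plan is to prove the quadratic identity $\Pim_{f,f}(t) = \int_{(-\infty,t)}(\mc Ff)^*\nu(\mc Ff)$ for compactly supported $f$, use it to derive the bound $\|\mc Ff\|_\nu \le \|f\|_w$, extend $\mc F$ by continuity, and then polarize. Fix a compactly supported $f \in L^2(w)$. By Lemma \ref{L:6.5} with $g=f$, the function $\la \mapsto \<f, E_\la f\>_w - (\mc Ff)(\ol)^* M(\la)(\mc Ff)(\la)$ is analytic on $\bb C \setminus \Lambda$. Since $\<f, E_\la f\>_w = \<f, R_\la f\>$ does not see the choice of representative, and since $\bb R \cap \Lambda = \emptyset$ by Hypothesis \ref{H:6.1}, any rectangle $\Gamma$ around $(c,d) \subset \bb R$ with sufficiently small height is a closed contour inside the domain of analyticity. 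Cauchy's theorem therefore equates the two contour integrals, and Lemma \ref{L:6.5a}, applied at common points of continuity $c,d$ of $\Pim_{f,f}$ and $N$, yields
$$\Pim_{f,f}(d) - \Pim_{f,f}(c) = \int_{(c,d)} (\mc Ff)^* \nu (\mc Ff).$$

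This identity is next upgraded to the level of measures. Being of bounded variation, $\Pim_{f,f}$ and $\tr N$ each have only countably many discontinuities, so their common continuity points are dense in $\bb R$; hence the two finite positive Borel measures $d\Pim_{f,f}$ and $(\mc Ff)^*\nu(\mc Ff)$ agree. In particular,
$$\|\mc Ff\|_\nu^2 = \<f,\pim(\bb R)f\>_w = \<f,\bb P f\>_w = \|\bb P f\|_w^2 \le \|f\|_w^2,$$
where the third equality uses $\bb P^2 = \bb P^* = \bb P$. Thus $\mc F$ is a contraction on the dense subspace of compactly supported classes and admits a unique bounded extension $\mc F:L^2(w) \to L^2(\nu)$. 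The identity $\Pim_{f,f}(t) = \int_{(-\infty,t)}(\mc Ff)^*\nu(\mc Ff)$ persists for all $f \in L^2(w)$ because both sides are continuous quadratic forms in $f$.

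To handle $f \ne g$, polarize: $(f,g) \mapsto \Pim_{f,g}(t)$ and $(f,g) \mapsto \int_{(-\infty,t)}(\mc Ff)^*\nu(\mc Fg)$ are sesquilinear forms on $L^2(w)$ that agree on the diagonal by the previous step, so they agree identically. Taking $t \uparrow \infty$ yields Parseval's identity $\<f,\bb P g\>_w = \<\mc Ff,\mc Fg\>_\nu$, and since $\bb P$ is the orthogonal projection onto $\mc H_0$ with $\ker \bb P = \mc H_\infty$, we obtain $\mc Ff = 0 \Leftrightarrow \|\mc Ff\|_\nu = 0 \Leftrightarrow \bb P f = 0 \Leftrightarrow f \in \mc H_\infty$, giving $\ker \mc F = \mc H_\infty$. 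The main obstacle is the measure-theoretic upgrade in the second paragraph: the agreement of two distribution functions on a dense set must be promoted to equality of the underlying Borel measures. This is unambiguous here because both measures are finite and positive and the continuity sets are co-countable, but it is precisely the place where Hypothesis \ref{H:6.1} (so that the rectangle can enclose any real interval) is essential.
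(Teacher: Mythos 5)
Your proposal is correct and follows essentially the same route as the paper: Lemma \ref{L:6.5} plus Cauchy's theorem to equate the two contour integrals of Lemma \ref{L:6.5a}, the resulting identity $\int_{[c,d)}d\Pim_{f,f}=\int_{[c,d)}(\mc Ff)^*\nu(\mc Ff)$ upgraded from continuity points to all intervals, the bound $\|\mc Ff\|_\nu=\|\bb Pf\|_w\le\|f\|_w$, extension by continuity from the compactly supported (dense) classes, and polarization. The only cosmetic differences are that the paper phrases the upgrade via left-continuity of $\Pim_{f,f}$ and $N$ rather than density of common continuity points, and spells out the extension as a Cauchy-sequence argument with truncations $f\chi_{[a_n,b_n]}$ instead of invoking the abstract bounded-extension principle.
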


\begin{proof}
Suppose $f\in L^2(w)$ is compactly supported.
Then Lemma \ref{L:6.5} and Cauchy's theorem show that
$$\oint_\Gamma \<f,R_\la f\>=\oint_\Gamma(\mc Ff)(\ol)^* M(\la)(\mc Ff)(\la)$$
since the integral on the left exists according to Lemma \ref{L:6.5a}.
That lemma gives also
\begin{equation}\label{180212.1}
\int_{[c,d)} d\Pim_{f,f}=\int_{[c,d)} (\mc Ff)^*\nu(\mc Ff)
\end{equation}
when $c$ and $d$ are points of continuity for $\Pim_{f,f}$ and $N$.
Since $\Pim_{f,f}$ and $N$ are both left-continuous equation \eqref{180212.1} actually holds always.
Letting $[c,d)$ tend to $\bb R$ proves that $\mc Ff\in L^2(\nu)$.
Furthermore, polarization gives
$$\int_{[c,d)} d\Pim_{f,g}=\int_{[c,d)} (\mc Ff)^*\nu(\mc Fg)$$
when $g\in L^2(w)$ is also compactly supported.

Now let $f$ be an arbitrary element of $L^2(w)$ and $n\mapsto[a_n,b_n]$ be a sequence of intervals in $(a,b)$ converging to $(a,b)$.
Setting $f_n=f\chi_{[a_n,b_n]}$ we see that
$$\|\mc F f_n-\mc Ff_m\|_\nu=\|\bb P(f_n-f_m)\|_w\leq \|f_n-f_m\|_w$$
showing that $n\mapsto \mc Ff_n$ is a Cauchy sequence in $L^2(\nu)$ and thus convergent.
By interweaving sequences it follows that the limit of this Cauchy sequence does not depend on how $f$ is approximated.
We denote the limit by $\mc F f$ thereby extending our definition of the Fourier transform to all of $L^2(w)$.
\end{proof}

Next we define a transform $\mc G:L^2(\nu)\to L^2(w)$.
Again, we will begin by considering compactly supported elements in $L^2(\nu)$.
Specifically, if $\hat f$ is such an element, we define
$$(\mc G \hat f)(x)=\int U(x,\cdot)\nu \hat f.$$

\begin{lem}\label{L:6.6a}
When $\hat f\in L^2(\nu)$ is compactly supported, then $\mc G\hat f$ is in $L^2(w)$.
The map $\mc G$ extends, by continuity, to all of $L^2(\nu)$.
We have that
\begin{equation}\label{180110.1}
\<g,\mc G\hat f\>_w=\<\mc Fg,\hat f\>_\nu
\end{equation}
for all $g\in L^2(w)$ and all $\hat f\in L^2(\nu)$.
Moreover, $\mc G\circ\mc F=\bb P$ and $(\ran\mc F)^\perp=\ker\mc G$.
\end{lem}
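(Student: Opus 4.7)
The plan is to build the duality identity \eqref{180110.1} first (on a dense subset), then deduce everything else from it together with the properties of $\mc F$ already proved in Lemma \ref{L:6.6}.

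First, for compactly supported $\hat f \in L^2(\nu)$, the integrand $U(x,\la)\tilde N(\la)\hat f(\la)$ is bounded on the compact support in $\la$ for each fixed $x\in(a,b)$ (since $\mathbb{R}\cap\Lambda=\emptyset$ by Hypothesis \ref{H:6.1}, so $U(x,\cdot)$ is analytic in a neighborhood of $\supp\hat f$), so $(\mc G\hat f)(x)$ is pointwise well-defined and locally bounded in $x$. In particular $\chi_{[a_n,b_n]}\mc G\hat f\in\mc L^2(w)$ for any $[a_n,b_n]\subset(a,b)$. Next I would prove the duality identity for a compactly supported representative $g$ of an element of $L^2(w)$ and compactly supported $\hat f\in L^2(\nu)$ by Fubini: writing
$$(\mc Fg)(\la)^*=\int g(y)^*\tilde W(y)U(y,\la)\,d\tr w(y)$$
(using $\ol=\la$ for real $\la$) and inserting into $\langle\mc Fg,\hat f\rangle_\nu=\int(\mc Fg)^*\tilde N\hat f\,d\tr\nu$, the boundedness on the compact supports justifies Fubini and yields $\langle\mc Fg,\hat f\rangle_\nu=\langle g,\mc G\hat f\rangle_w$.

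Second, I would establish that $\mc G\hat f\in L^2(w)$ with $\|\mc G\hat f\|_w\le\|\hat f\|_\nu$ by a truncation bootstrap: apply the duality to $g=\chi_{[a_n,b_n]}\mc G\hat f$, which lies in $\mc L^2(w)$ by the previous paragraph, obtaining
$$\|\chi_{[a_n,b_n]}\mc G\hat f\|_w^2=\langle\chi_{[a_n,b_n]}\mc G\hat f,\mc G\hat f\rangle_w=\langle\mc F(\chi_{[a_n,b_n]}\mc G\hat f),\hat f\rangle_\nu\le\|\chi_{[a_n,b_n]}\mc G\hat f\|_w\|\hat f\|_\nu,$$
where the last inequality combines Cauchy--Schwarz with the bound $\|\mc Fh\|_\nu\le\|h\|_w$ from Lemma \ref{L:6.6}. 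Dividing and letting $[a_n,b_n]\uparrow(a,b)$ gives $\|\mc G\hat f\|_w\le\|\hat f\|_\nu$. Since compactly supported functions are dense in $L^2(\nu)$, $\mc G$ extends by continuity to a bounded operator $L^2(\nu)\to L^2(w)$. The duality identity \eqref{180110.1} then extends to all $g\in L^2(w)$ and all $\hat f\in L^2(\nu)$ by continuity of $\mc F$ (Lemma \ref{L:6.6}) and of $\mc G$.

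Finally, I would read off the last two statements directly from \eqref{180110.1}. For $\mc G\circ\mc F=\bb P$: for any $f,g\in L^2(w)$,
$$\langle g,\mc G\mc Ff\rangle_w=\langle\mc Fg,\mc Ff\rangle_\nu=\langle g,\bb Pf\rangle_w$$
by Lemma \ref{L:6.6}, so $\mc G\mc Ff=\bb Pf$. For $(\ran\mc F)^\perp=\ker\mc G$: $\hat f\in(\ran\mc F)^\perp$ if and only if $\langle\mc Fg,\hat f\rangle_\nu=0$ for every $g\in L^2(w)$, which by \eqref{180110.1} is equivalent to $\langle g,\mc G\hat f\rangle_w=0$ for every $g$, i.e.\ $\mc G\hat f=0$.

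The only delicate point is the Fubini step, which is why I begin with compactly supported $\hat f$ and $g$; once that is cleared, the bootstrap from local $L^2$-estimates to a global bound, and the two final identities, are purely formal consequences of Lemma \ref{L:6.6}.
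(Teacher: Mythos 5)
Your proposal is correct and follows essentially the same route as the paper: establish the duality $\<g,\mc G\hat f\>_w=\<\mc Fg,\hat f\>_\nu$ for compactly supported data by Fubini, apply it with $g=\chi_{[a_n,b_n]}\mc G\hat f$ together with the bound $\|\mc Fh\|_\nu=\|\bb Ph\|_w\le\|h\|_w$ from Lemma \ref{L:6.6} to get $\|\mc G\hat f\|_w\le\|\hat f\|_\nu$, extend by continuity, and read off $\mc G\circ\mc F=\bb P$ and $(\ran\mc F)^\perp=\ker\mc G$ from the duality. The only (harmless) difference is that you spell out the pointwise well-definedness of $\mc G\hat f$ and the justification of Fubini, which the paper leaves implicit.
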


\begin{proof}
Suppose that $\hat f\in L^2(\nu)$ is compactly supported, denote $\mc G\hat f$ by $f$, and let $f_n=f\chi_{[a_n,b_n]}$.
Upon changing the order of integration we get
$$\|f_n\|_w^2=\<f_n,f\>_w=\<\mc Ff_n,\hat f\>_\nu\leq \|\mc Ff_n\|_\nu \|\hat f\|_\nu.$$
Lemma \ref{L:6.6} implies $\|\mc Ff_n\|_\nu=\|\bb Pf_n\|_w\leq \|f_n\|_w$ and hence we have $\|f_n\|_w\leq \|\hat f\|_\nu$.
This is the case for every interval $[a_n,b_n]\subset (a,b)$ so it follows that $\mc G\hat f\in L^2(w)$.

As before we extend the domain of definition of $\mc G$ from the compactly supported functions in $L^2(\nu)$ to all of $L^2(\nu)$.
Specifically, for a general element $\hat f$ in $L^2(\nu)$ set $\hat f_n=\hat f\chi_{[-n,n]}$.
Then, according to what we just proved, $\|\mc G\hat f_n-\mc G\hat f_m\|_w\leq \|\hat f_n-\hat f_m\|_\nu$ which implies that $n\mapsto \mc G\hat f_n$ is a Cauchy sequence in $L^2(w)$ and thus convergent.
We denote the limit by $\mc G\hat f$.

Now suppose $g\in L^2(w)$, $\hat f\in L^2(\nu)$, $[a_k,b_k]\subset(a,b)$ and $[-n,n]\subset\bb R$.
Upon changing the order of integration we get (as before)
$$\<g\chi_{[a_k,b_k]},\mc G (\hat f\chi_{[-n,n]})\>_w=\<\mc F(g\chi_{[a_k,b_k]}),\hat f\chi_{[-n,n]}\>_\nu.$$
Let $[a_k,b_k]\times[-n,n]$ approach $(a,b)\times\bb R$ to obtain equation \eqref{180110.1}.

If $\hat f=\mc F f$ for some $f\in L^2(w)$ we get from equation \eqref{180110.1} and Lemma \ref{L:6.6} that $\<g,(\mc G\circ\mc F)f\>=\<g,\bb Pf\>$ which shows that $\mc G\circ\mc F=\bb P$.
Also the last claim follows immediately from equation \eqref{180110.1}.
\end{proof}

\begin{lem}\label{L:6.7}
If $\Im(\la)\neq0$, then $\mc F(R_\la g)(t)=(\mc Fg)(t)/(t-\la)$.
\end{lem}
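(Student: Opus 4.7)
The plan is to transfer the resolvent relation to the Fourier side via the functional calculus for the operator part $T_0$ and the identification $d\Pim_{f,g}=(\mc Ff)^*\nu(\mc Fg)$ supplied by Lemma~\ref{L:6.6}.

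Since $\lambda\in\rho(T)$, I have $R_\lambda g\in\dom T=\dom T_0\subset\mc H_0$, so $\bb P R_\lambda g=R_\lambda g$ and Lemma~\ref{L:6.6} gives $\<f,R_\lambda g\>_w=\<\mc Ff,\mc FR_\lambda g\>_\nu$ for every $f\in L^2(w)$. On the other hand, the spectral calculus for the self-adjoint operator $T_0$ (Appendix~\ref{ALR}) yields
\[
\<f,R_\lambda g\>_w=\int\frac{d\Pim_{f,g}(s)}{s-\lambda}=\<\mc Ff,\tfrac{\mc Fg}{s-\lambda}\>_\nu,
\]
the second equality again using Lemma~\ref{L:6.6}. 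Matching the two expressions produces
\[
\<\mc Ff,\ \mc FR_\lambda g-\tfrac{\mc Fg}{s-\lambda}\>_\nu=0\qquad\text{for all }f\in L^2(w),
\]
so $\mc FR_\lambda g-\mc Fg/(s-\lambda)$ lies in $(\ran\mc F)^\perp=\ker\mc G$ by Lemma~\ref{L:6.6a}.

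The main obstacle will be upgrading this orthogonality to a genuine equality in $L^2(\nu)$. Applying $\mc G$ to the difference and using $\mc G\mc F=\bb P$ only recovers the consistent but non-conclusive identity $\mc G(\mc Fg/(s-\lambda))=\bb P R_\lambda g=R_\lambda g$. I would close the argument by showing that $\mc Fg/(s-\lambda)$ itself lies in $\ran\mc F$, at which point the orthogonality forces equality. This I expect to establish either by (i) reducing to compactly supported $g$ and computing $\mc G(\mc Fg/(s-\lambda))(x)$ explicitly from the definition of $\mc G$ and the Nevanlinna representation of $M$, then matching the outcome with the formula~\eqref{180108.2} for $E_\lambda g(x)$; or by (ii) invoking the surjectivity of $\mc F|_{\mc H_0}\colon\mc H_0\to L^2(\nu)$—equivalently $\ker\mc G=\{0\}$—a completeness statement that is a standard companion to the Herglotz-Nevanlinna construction of $\nu$.
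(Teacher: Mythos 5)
Your first half coincides with the paper's: both start from $\<f,R_\la g\>_w=\int (t-\la)^{-1}d\Pim_{f,g}(t)=\<\mc Ff,(\mc Fg)/(\cdot-\la)\>_\nu$ (Theorem \ref{T:B.8} plus Lemma \ref{L:6.6}) and from $\<f,R_\la g\>_w=\<\mc Ff,\mc F(R_\la g)\>_\nu$. But the way you propose to close the argument has a genuine gap. Your route (ii) — invoking $\ker\mc G=\{0\}$, equivalently the density of $\ran\mc F$ — is circular at this point in the development: that statement is exactly Lemma \ref{L:6.8}, whose proof in the paper explicitly uses Lemma \ref{L:6.7}. It is not ``a standard companion'' available for free here; at this stage all that is known is $\mc G\circ\mc F=\bb P$ and $(\ran\mc F)^\perp=\ker\mc G$ from Lemma \ref{L:6.6a}. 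Your route (i) is also inconclusive as described: computing $\mc G\bigl((\mc Fg)/(\cdot-\la)\bigr)$ and matching it with $E_\la g$ would only show that $\mc G$ sends both $(\mc Fg)/(\cdot-\la)$ and $\mc F(R_\la g)$ to the same element, which says nothing without injectivity of $\mc G$ — again Lemma \ref{L:6.8}. Nor does such a computation place $(\mc Fg)/(\cdot-\la)$ in $\ran\mc F$, which is what you would actually need to combine with the orthogonality.

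The paper's resolution avoids any density or injectivity claim: it expands $\|\mc F(R_\la g)-(\mc Fg)/(\cdot-\la)\|_\nu^2$ into four terms and shows each equals $\pm\|R_\la g\|_w^2$, so the sum vanishes. The two cross terms come from specializing your first identity to $f=R_\la g$; the term $\|\mc F(R_\la g)\|_\nu^2=\|R_\la g\|_w^2$ comes from Lemma \ref{L:6.6}; and the term $\|(\mc Fg)/(\cdot-\la)\|_\nu^2=\|R_\la g\|_w^2$ is obtained from $\|R_\la g\|_w^2=\<g,R_\ol R_\la g\>_w=(\la-\ol)^{-1}\<g,(R_\la-R_\ol)g\>_w$ via the resolvent relation of Theorem \ref{T:B.2} and two more applications of the first identity. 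If you add this norm computation, your argument closes without ever needing $\ran\mc F$ to be dense.
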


\begin{proof}
First note that $t\mapsto \hat g(t)/(t-\la)$ is in $L^2(\nu)$ if $\hat g$ is.
From Theorem \ref{T:B.8} and Lemma \ref{L:6.6} we get
$$\<f,R_\la g\>_w=\int \frac1{t-\la}\ d\Pim_{f,g}(t)=\int (\mc F f)(t)^* \nu(t) \frac{(\mc Fg)(t)}{t-\la}=\<\mc Ff,(\mc Fg)/(\cdot-\la)\>_\nu.$$
In particular, $\|R_\la g\|_w^2=\<\mc F(R_\la g),(\mc Fg)/(\cdot-\la)\>_\nu$.
On the other hand
$$\|R_\la g\|_w^2=\<g,R_\ol R_\la g\>_w=\frac1{\la-\ol}\<g,(R_\la- R_\ol)g\>_w=\|\mc Fg/(\cdot-\la)\|^2_\nu.$$
Lemma \ref{L:6.6} also implies that $\|R_\la g\|_w^2=\|\mc F(R_\la g)\|_\nu^2$.
Thus the four terms appearing in the expansion of $\|\mc F(R_\la g)-\mc Fg/(\cdot-\la)\|^2$ cancel each other leaving $0$.
\end{proof}

\begin{lem}\label{L:6.8}
The kernel of $\mc G$ is trivial, $\mc F\circ\mc G=\id$, $\mc F$ is surjective, and $\mc F^*=\mc G$.
In particular, $\mc F|_{\mc H_0}:\mc H_0\to L^2(\nu)$ is unitary.
\end{lem}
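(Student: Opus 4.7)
The plan is as follows. First, $\mc F^*=\mc G$ is immediate: equation \eqref{180110.1} is exactly the defining adjoint identity. This already gives $\ker\mc G=(\ran\mc F)^\perp$. Next, specializing the identity $\<f,\bb Pg\>_w=\<\mc F f,\mc Fg\>_\nu$ from Lemma \ref{L:6.6} to $f=g\in\mc H_0$ shows that $\mc F|_{\mc H_0}$ is isometric, so its range is closed. Consequently, once we have $\ker\mc G=\{0\}$, density plus closedness yields $\ran\mc F=L^2(\nu)$ (surjectivity) and the unitarity of $\mc F|_{\mc H_0}$. The remaining identity $\mc F\circ\mc G=\id$ then follows from $\mc F\circ\mc G\circ\mc F = \mc F\bb P = \mc F$ (using $\ker\mc F\supset\mc H_\infty=\ran(\id-\bb P)$) together with the surjectivity of $\mc F$. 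Thus the whole theorem reduces to a single nontrivial step: proving $\ker\mc G=\{0\}$.

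The main obstacle is this injectivity. Let $\hat f\in\ker\mc G$, so $\<\mc F g,\hat f\>_\nu=0$ for every $g\in L^2(w)$. Replacing $g$ by $R_\la g$ and applying Lemma \ref{L:6.7} gives, for every compactly supported $g$ and every $\la\in\bb C\setminus\bb R$,
$$\int\frac{(\mc F g)(t)^*\,\tilde N(t)\,\hat f(t)}{t-\ol}\,d\tr\nu(t)=0.$$
This is the Cauchy transform of the finite complex measure $d\mu_g=(\mc F g)^*\tilde N\hat f\,d\tr\nu$; since it vanishes off $\bb R$, Stieltjes inversion forces $\mu_g=0$. Hence $(\mc F g)(t)^*\tilde N(t)\hat f(t)=0$ for $\tr\nu$-a.e.~$t$, with an exceptional set depending on $g$.

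To exchange these quantifiers, fix a countable family $\{g_k\}$ of compactly supported functions that is dense in $L^2(w|_{[c_m,d_m]})$ for an exhaustion $[c_m,d_m]\uparrow(a,b)$. After discarding a single $\tr\nu$-null set $E$, we have $(\mc F g_k)(t)^*\tilde N(t)\hat f(t)=0$ for all $k$ and all $t\notin E$. For each fixed $t$ the map $g\mapsto(\mc F g)(t)$ is bounded on each $L^2(w|_{[c,d]})$ because $U(\cdot,t)$ is of bounded variation on $[c,d]$; a routine approximation argument therefore upgrades the identity to all compactly supported $g$. Lemma \ref{L:5.1} then says the vectors $(\mc F g)(t)$ sweep out $N_0^\perp$, forcing $\tilde N(t)\hat f(t)\in N_0$. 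On the other hand, the construction in Section \ref{sst.2} gives $M(\la)=PM(\la)$ and $M(\la)|_{N_0}=0$, and the Nevanlinna representation propagates this to $\ran\tilde N(t)\subset N_0^\perp$ for $\tr\nu$-a.e.~$t$. Combining, $\tilde N(t)\hat f(t)\in N_0\cap N_0^\perp=\{0\}$ a.e., whence $\|\hat f\|_\nu^2=\int\hat f^*\tilde N\hat f\,d\tr\nu=0$ and $\hat f=0$ in $L^2(\nu)$.
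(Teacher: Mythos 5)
Your proposal is correct, and its skeleton matches the paper's: everything is reduced to the single claim $\ker\mc G=\{0\}$, and that claim is attacked exactly as in the paper by feeding $R_\la g$ into the orthogonality relation, invoking Lemma \ref{L:6.7}, and using the injectivity of the Stieltjes transform to conclude that $(\mc Fg)^*\nu\hat f$ is the zero measure for every compactly supported $g$. You diverge in two places. First, in the soft part: the paper obtains $\mc F\circ\mc G=\id$ from density of $\ran\mc F$ alone via an adjoint computation, whereas you first get surjectivity of $\mc F$ from the isometry of $\mc F|_{\mc H_0}$ (closed range) and then cancel $\mc F$ on the right in $\mc F\mc G\mc F=\mc F$; both are fine. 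Second, and more substantively, in the endgame of the injectivity proof: the paper stays at the level of measures, forming the Gram-type kernel $B(s,t)=\int_{[c,d]}U(x,s)^*w(x)U(x,t)$, noting $\ker B(s,s)=N_0$, and solving $B(s,t)^*\xi(t)=\hat f(t)$ for $t$ near $s$ to conclude that $\hat f^*\nu\hat f$ vanishes near every $s$, which sidesteps any almost-everywhere bookkeeping. You instead pass to the Radon--Nikodym density $\tilde N$, dispose of the $g$-dependent null sets with a countable dense family and the pointwise boundedness of $g\mapsto(\mc Fg)(t)$, apply Lemma \ref{L:5.1} at each real $t$ (legitimate since $\Lambda\cap\bb R=\emptyset$ under Hypothesis \ref{H:6.1}) to place $\tilde N(t)\hat f(t)$ in $N_0$, and then use $M=PMP$ together with Stieltjes inversion to get $\ran\tilde N(t)\subset N_0^\perp$, forcing $\tilde N(t)\hat f(t)=0$ a.e. Your route costs some measurability and approximation bookkeeping but buys the clean structural fact $\nu=P\nu P$ and avoids the continuity-in-$t$ argument for $\ker B(s,t)$ that the paper needs; both arguments are sound.
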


\begin{proof}
Assume first $\ker \mc G=\{0\}$ so that $\ran\mc F$ is dense.
Lemmas \ref{L:6.6} and \ref{L:6.6a} imply
$$\<(\mc F\circ\mc G-\id)\hat g,\mc F f\>_\nu=\<\mc G\hat g,\bb P f\>_w-\<\hat g,\mc Ff\>_w=\<\hat g,\mc F(\bb Pf-f)\>_w=0.$$
Thus $\mc F\circ\mc G=\id$ which implies the other statements claimed.

Now we show that indeed $\ker \mc G=\{0\}$.
Thus suppose $\hat f\in\ker\mc G$ and $\Im(\la)\neq0$.
Then, employing Lemma \ref{L:6.7},
$$0=\<\mc F R_\ol g,\hat f\>_\nu=\int \frac{(\mc Fg)(t)^*\nu(t)\hat f(t)}{t-\la}$$
whenever $g\in L^2(w)$.
Since the Stieltjes transform of a measure is zero only for the zero measure we have $(\mc Fg)^*\nu \hat f=0$ and hence $\<\mc Fg,\hat f\chi_K\>_\nu=\int_K(\mc Fg)^*\nu \hat f=0$ whenever $K$ is a compact subset of $\bb R$.
Lemma \ref{L:6.6a} gives us then $\hat f\chi_K\in\ker\mc G$.
Now apply $\mc G$ to see that $x\mapsto\int_K U(x,t)\nu(t)\hat f(t)$ is the zero element in $L^2(w)$.
Hence, whenever $[c,d]\in(a,b)$,
$$0=\int_{[c,d]}U(x,s)^*w(x)\int_K U(x,t)\nu(t)\hat f(t)=\int_K B(s,t) \nu(t) \hat f(t)$$
where
$$B(s,t)=\int_{[c,d]}U(x,s)^*w(x)U(x,t)=PB(s,t)P.$$
Now fix an arbitrary $s\in\bb R$ and note that, for a sufficiently large interval $[c,d]$, we have that $\alpha^*B(s,s)\alpha=0$ if and only if $\alpha\in N_0$ and hence $\ker B(s,s)=N_0$.
As long as $t$ is sufficiently close to $s$ we also have $\ker B(s,t)=\ker(PB(s,t)P)=N_0$ implying that $\ran B(s,t)^*=N_0^\perp$.
Since $\hat f$ is an element of $L^2(\nu)$ we may assume that its values are in $\ran P=N_0^\perp$.
Thus, for each $t$ sufficiently close to $s$ we may find a vector $\xi(t)$ such that $B(s,t)^*\xi(t)=\hat f(t)$.
Since $B(s,\cdot)\nu\hat f$ is the zero measure, it follows that $\hat f^*\nu\hat f=\xi^*B(s,\cdot)\nu\hat f$ is also the zero measure.
But this means that $\hat f$ is zero almost everywhere with respect to $\nu$.
\end{proof}

It is easy to see that the Fourier transform diagonalizes the relation $T$.
Indeed, suppose $(u,f)\in T$ and hence that $(f-\la u,u)\in R_\la$.
Then Lemma~\ref{L:6.7} gives $(\mc Fu)(t)=(\mc F(f-\la u))(t)/(t-\la)$ which simplifies to $t(\mc Fu)(t)=(\mc Ff)(t)$.
Conversely, if the functions $t\mapsto \hat u(t)$ and $t\mapsto \hat f(t)=t \hat u(t)$ are in $L^2(\nu)$, then $(\mc G\hat u,\mc G\hat f)\in T_0$.

We have proved the following theorem.
\begin{thm} \label{t:main}
Suppose $T$ is a self-adjoint restriction of a relation $T_\mx$ whose coefficients $q$ and $w$ satisfy Hypotheses \ref{H:4.1} and \ref{H:6.1}.
Let $\nu$ be the measure generated by the associated $M$-function.
Then the following statements hold.
\begin{enumerate}
\item There is a continuous map $\mc F:L^2(w)\to L^2(\nu)$ which assigns to a compactly supported element $f\in L^2(w)$ the function defined by $(\mc Ff)(t)=\int U(\cdot,t)^*wf$.
The kernel of $\mc F$ is the space $\mc H_\infty=\{f\in L^2(w):(0,f)\in T\}$.
\item  There is a continuous map $\mc G:L^2(\nu)\to L^2(w)$ which assigns to a compactly supported element $\hat f\in L^2(\nu)$ the function defined by $(\mc G\hat f)(x)=\int U(x,\cdot)\nu \hat f$.
The range of $\mc G$ is the space $\mc H_0=L^2(w)\ominus\mc H_\infty$.
\item The restriction of $\mc F$ to $\mc H_0$ is a unitary operator and $\mc G$ is the inverse of this operator.
\item If $(u,f)\in T$ then $(\mc Ff)(t)=t (\mc Fu)(t)$.
Conversely, if $t\mapsto \hat u(t)$ and $t\mapsto \hat f(t)=t\hat u(t)$ are both in $L^2(\nu)$, then $(\mc G\hat u, \mc G\hat f)\in T_0\subset T$.
\end{enumerate}
\end{thm}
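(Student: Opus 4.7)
The plan is to recognize Theorem~\ref{t:main} as a summary that packages results established individually in Lemmas \ref{L:6.6}--\ref{L:6.8}, together with the short diagonalization calculation already sketched just above the theorem statement. Rather than rediscovering anything, the work is to verify that each of (1)--(4) falls out of the corresponding lemma once the notation is matched.

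For (1), I would cite Lemma \ref{L:6.6}: it gives the compactly-supported definition, the Parseval-type identity $\|\mc Ff\|_\nu=\|\bb Pf\|_w$ that drives the continuous extension to all of $L^2(w)$, and the kernel identification (since $\bb P$ vanishes precisely on $\mc H_\infty$, so does $\mc F$). For (2), Lemma \ref{L:6.6a} supplies the compactly-supported definition of $\mc G$, its continuous extension, the adjoint relation $\mc F^*=\mc G$, and the identity $\mc G\circ\mc F=\bb P$; Lemma \ref{L:6.8} supplements this with injectivity of $\mc G$ and $\mc F\circ\mc G=\id$. From $\mc F^*=\mc G$ together with (1), one reads off $\ran\mc G=\ran\mc F^*=(\ker\mc F)^\perp=\mc H_0$. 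Part (3) is then essentially the content of Lemma \ref{L:6.8}: $\mc F|_{\mc H_0}$ is unitary, and the two identities $\mc G\circ\mc F=\bb P$ and $\mc F\circ\mc G=\id$ collapse to genuine mutual inversions between $\mc H_0$ and $L^2(\nu)$.

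For (4), I would carry out the diagonalization argument already indicated in the paragraph preceding the theorem. If $(u,f)\in T$, then for any non-real $\la$ one has $u=R_\la(f-\la u)$, and Lemma~\ref{L:6.7} converts this to $(\mc Fu)(t)=((\mc Ff)(t)-\la(\mc Fu)(t))/(t-\la)$, which rearranges to $t(\mc Fu)(t)=(\mc Ff)(t)$ in $L^2(\nu)$. Conversely, if $\hat u$ and $t\hat u$ both lie in $L^2(\nu)$, I would set $u:=\mc G\hat u$ and $f:=\mc G(t\hat u)$ (both in $\mc H_0$ by (2)), verify $\mc F(R_\la(f-\la u))=\mc Fu$ via Lemma~\ref{L:6.7} together with $\mc F\circ\mc G=\id$, and invoke injectivity of $\mc F$ on $\mc H_0$ to conclude $u=R_\la(f-\la u)$, i.e. $(u,f)\in T_0\subset T$. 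No single step here is a genuine obstacle, since all the substantive analysis --- the Herglotz representation of $M$, the contour-integral identification of the spectral measure, the injectivity of $\mc G$ via the Stieltjes-transform reproducing property, and the Parseval estimate --- is already contained in the preceding lemmas. The one point requiring care is the bookkeeping between $T$ and its operator part $T_0$, and between $L^2(w)$ and $\mc H_0$: the transform is blind to $\mc H_\infty$, which is precisely why the converse in (4) lands in $T_0$ rather than only in $T$, matching the theorem's statement.
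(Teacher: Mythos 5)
Your proposal is correct and follows essentially the same route as the paper: the theorem is stated there with no separate proof beyond the remark ``We have proved the following theorem,'' since parts (1)--(3) are exactly the content of Lemmas \ref{L:6.6}, \ref{L:6.6a}, and \ref{L:6.8}, and part (4) is the diagonalization paragraph preceding the statement, which uses Lemma \ref{L:6.7} in the forward direction just as you do. Your filling-in of the converse of (4) (applying $\mc F$, using $\mc F\circ\mc G=\id$ and injectivity of $\mc F$ on $\mc H_0$) is a correct elaboration of a step the paper merely asserts.
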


\section{Special case: regular endpoints}\label{sre}
The endpoint $a$ is called \textit{regular}\index{regular endpoint}, if the antiderivatives $Q$ and $W$ of $q$ and $w$ are of bounded variation on $(a,c)$ for some (and hence all) $c\in\iOmega$.
Similarly, $b$ is called regular if $Q$ and $W$ are of bounded variation on $(c,b)$.
Here we allow $a=-\infty$ and $b=\infty$.
An endpoint which is not regular is called \textit{singular}\index{singular endpoint}.

If $a$ is regular, then the measure associated with $\tr w$ is a finite measure on $(a,c)$.
This implies that the components of $wf$ are associated with finite measures on $(a,c)$ whenever $f\in \mc L^2(w)$.
Consequently, in view of Remark \ref{R:2.4}, any balanced solution $u$ of $Ju'+qu=wf$ is of bounded variation on $(a,c)$.
Hence $u^\pm$ and $u^\#$ all have the same limit at $a$ which we denote by the $u(a)$.
In this setup we do not allow (or, if you will, ignore) problems where $a$ carries mass.
Unless $a=-\infty$ this is not actually a restriction, since we may set $w$ and $q$ equal to zero on $(-\infty,a)$ and consider the corresponding problem posed on $(-\infty,b)$.
Similar considerations hold, of course, with the roles of $a$ and $b$ reversed.
$u(a)$ and $u(b)$ are called the boundary values of $u\in\dom\mc T_\mx$ when $a$ and $b$ are regular.

In the following we will consider the case when both $a$ and $b$ are regular.
We emphasize that $\Lambda$ is then a closed set consisting of isolated points so that a part of Hypothesis \ref{H:6.1} is automatically satisfied when both endpoints are regular.
We will not require that $\Lambda\cap\bb R$ is empty unless explicitly mentioned.

\subsection{Boundary conditions}
We begin by showing that the symmetric restrictions of $T_\mx$ are given by linear homogenous conditions on $u(a)$ and $u(b)$.
Recall that the deficiency indices $n_\pm$ of $T_\mn$ are the dimensions of $D_{\pm i}=\{([u],\pm i [u])\in T_\mx\}$.

\begin{lem}\label{L5.1}
If both endpoints of $\iOmega$ are regular, then the minimal relation $T_\mn$ has equal deficiency indices.
\end{lem}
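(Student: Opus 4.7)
The plan is to exhibit a conjugate pair $\la_0, \ov{\la_0}$ for which $\dim D_{\la_0} = \dim D_{\ov{\la_0}}$, and then invoke Corollary~\ref{C:B.3} to transport this equality to $\la = i$.

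First I would exploit the regularity of both endpoints. By Remark~\ref{R:2.4}, every balanced solution of $Ju'+qu=\la wu$ on $\iOmega$ is of bounded variation, and hence bounded, on $\iOmega$. Because the regularity assumption makes $\tr w$ a finite measure on $\iOmega$, each such solution lies automatically in $\mc L^2(w)$. Thus, for any $\la \in \bb C \setminus \Lambda$, Corollary~\ref{C4.1.4} furnishes an $n$-dimensional vector space $\mc L_\la = \{u \in \BVl^\#(\iOmega)^n : Ju'+qu=\la wu\}$, and every representative $u$ of a class with $([u], \la[u]) \in T_\mx$ may be taken to lie in $\mc L_\la$.

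Next I would compute the kernel of the surjective linear map $\Phi_\la \colon \mc L_\la \to D_\la$, $u \mapsto ([u], \la[u])$. If $\Phi_\la(u) = 0$, then $wu$ is the zero distribution, whence $Ju'+qu = \la wu = 0$, forcing $u \in \mc L_0$; conversely any $u \in \mc L_0$ automatically satisfies $Ju'+qu=\la wu$ and $wu=0$, so $\Phi_\la(u)=0$. Hence $\ker \Phi_\la = \mc L_0$, and
\[ \dim D_\la = \dim \mc L_\la - \dim \mc L_0 = n - \dim \mc L_0, \]
an expression independent of $\la$ for $\la \in \bb C \setminus \Lambda$.

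Finally, since $\Lambda$ is at most countable and satisfies $\Lambda = \ov\Lambda$, I can choose some $\la_0$ with $\Im\la_0 > 0$ and $\la_0, \ov{\la_0} \notin \Lambda$. The preceding display gives $\dim D_{\la_0} = \dim D_{\ov{\la_0}}$, and Corollary~\ref{C:B.3} transports this to
\[ n_+ = \dim D_i = \dim D_{\la_0} = \dim D_{\ov{\la_0}} = \dim D_{-i} = n_-. \]
The only mildly delicate point is verifying that $\ker \Phi_\la$ equals $\mc L_0$ independently of $\la$; the rest is bookkeeping supported by the existence-uniqueness theorem, Remark~\ref{R:2.4}, and the abstract constancy of deficiency from the appendix.
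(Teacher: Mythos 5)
Your proof is correct and follows essentially the same route as the paper's: both identify the null solutions of $Ju'+qu=\la wu$ with $\mc L_0$ and conclude $\dim D_\la=n-\dim\mc L_0$ for $\la\notin\Lambda$. Your explicit appeal to Corollary~\ref{C:B.3} to handle the (countably many) excluded values of $\la$ makes precise a step the paper leaves implicit, but the argument is the same.
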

\begin{proof}
Suppose $\la\not\in\Lambda$.
By Corollary \ref{C4.1.4} the space of solutions of $Ju'+qu=\la wu$ has dimension $n$.
Each of these solutions is bounded and, since $\tr w$ is a finite measure, they are all in $\mc L^2(w)$.
Some of them may be representatives of $[0]$ so that $n_\pm\leq n$.
If there is a solution of $Ju'+qu=\la wu$ with $\|u\|=0$, then $wu=0$ so that $u\in \mc L_0$ and also $Ju'+qu=-\la wu$ .
Hence $n_\pm=n-\dim \mc L_0$.
\end{proof}

We may not be able to associate unique boundary values with a given element $[u]\in\dom(T_\mx)$ since different representatives of $[u]$ may have different boundary values.
Worse, even for a fixed $([u],[f])\in T_\mx$ two different representatives of $[u]$ may have different boundary values.
To remedy this situation we make use of the evaluation operator $E$ from Definition \ref{D:5.2} and introduce the map $\mc B$ which assigns to $([u],[f])\in T_\mx$ the boundary values of $E([u],[f])$, \ie, $\mc{B}([u],[f])=\sm{E([u],[f])(a)\\ E([u],[f])(b)}$.
We also define the spaces
$$N=\Big\{\sm{v(a)\\ v(b)}\in\bb C^{2n}: v\in \mc L_0\Big\}$$
and $W=\{\mc B([v],[g]): ([v],[g])\in D_i\oplus D_{-i}\}$.

\begin{lem}\label{L5.2}
Suppose $(u,f)\in\mc T_\mx$.
Then $\sm{u(a)\\ u(b)}\in N$ if and only if $([u],[f])\in\ov{T_\mn}$.
Moreover, $W\cap N=\{0\}$ and $\dim W=\dim(D_i\oplus D_{-i})=2n-2\dim\mc L_0$.
\end{lem}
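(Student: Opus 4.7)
The plan is to first prove the biconditional ``$\sm{u(a)\\u(b)}\in N$ if and only if $([u],[f])\in\overline{T_\mn}$''; the other two assertions will then fall out immediately from the von Neumann--type decomposition $T_\mx=\overline{T_\mn}\dot+ D_i\dot+ D_{-i}$ (see Theorem \ref{vonNeumann2}) combined with Lemma \ref{L5.1}.

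For the forward direction I would pick a sequence $([u_j],[f_j])\in T_\mn$ converging in the graph norm to $([u],[f])$. Each $[u_j]$ admits, by definition, a compactly supported balanced representative $\tilde u_j$ with $J\tilde u_j'+q\tilde u_j=wf_j$. The evaluation $v_j:=E([u_j],[f_j])$ must agree with $\tilde u_j$ up to an element $\ell_j\in\mc L_0$, so $v_j(a)=\ell_j(a)$ and $v_j(b)=\ell_j(b)$ belong to $N$. Since both endpoints are regular, $Q$ and $W$ are of bounded variation on $(a,b)$, and Lemma \ref{L:6.2} applied with $(c,d)=(a,b)$ yields $v_j\to v:=E([u],[f])$ in $\BV^\#((a,b))^n$. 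From $|v_j(x)-v(x)|_1\leq\vertiii{v_j-v}$ pointwise, passing to the boundary limits gives $v_j(a)\to v(a)$ and $v_j(b)\to v(b)$; the finite-dimensional (hence closed) space $N$ absorbs the limit, so $\mc B([u],[f])\in N$.

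For the reverse direction I would exploit the duality $\overline{T_\mn}=T_\mn^{**}=T_\mx^*$. Assuming $\sm{v(a)\\v(b)}=\sm{\ell(a)\\\ell(b)}$ with $v=E([u],[f])$ and $\ell\in\mc L_0$, the function $\tilde v:=v-\ell$ is a balanced representative of $[u]$ still satisfying $J\tilde v'+q\tilde v=wf$, but now with $\tilde v(a)=\tilde v(b)=0$. Given any $([\xi],[g])\in T_\mx$, choose $\xi=E([\xi],[g])$, which has finite boundary values by Remark \ref{R:2.4}; Lagrange's identity \eqref{Lagrange} then reads
$$\<\xi,f\>-\<g,u\>=(\xi^*J\tilde v)^-(b)-(\xi^*J\tilde v)^+(a)=\xi(b)^*J\tilde v(b)-\xi(a)^*J\tilde v(a)=0,$$
so $([u],[f])\in T_\mx^*=\overline{T_\mn}$.

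The two remaining claims follow at once. Any $([v],[g])\in D_i\oplus D_{-i}$ with $\mc B([v],[g])\in N$ sits, by the biconditional, in $\overline{T_\mn}\cap(D_i\dot+ D_{-i})=\{0\}$. Taking $\mc B([v],[g])\in W\cap N$ yields $W\cap N=\{0\}$, while taking $\mc B([v],[g])=0$ (which is trivially in $N$) shows that $\mc B|_{D_i\oplus D_{-i}}$ is injective; hence $\dim W=\dim D_i+\dim D_{-i}=2(n-\dim\mc L_0)$ by Lemma \ref{L5.1}. I expect the main obstacle to be in the forward direction, namely verifying that endpoint evaluation is continuous with respect to the $\BV^\#$-norm; this ultimately reduces to the bound $\|v_j-v\|_\infty\leq\vertiii{v_j-v}$ on $(a,b)$ combined with the existence of boundary limits supplied by Remark \ref{R:2.4}.
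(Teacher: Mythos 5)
Your proof is correct, but the implication $([u],[f])\in\ov{T_\mn}\Rightarrow\sm{u(a)\\u(b)}\in N$ is argued by a genuinely different route than the paper's. You approximate by elements of $T_\mn$, note that their evaluations have boundary values in $N$ (since they differ from a compactly supported representative by an element of $\mc L_0$), and pass to the limit using the boundedness of $E_{(a,b)}$ from Lemma \ref{L:6.2} --- legitimate here because regularity of both endpoints puts $Q$ and $W$ in $\BV((a,b))$ --- together with the bound $\sup_x|h(x)|_1\leq\vertiii{h}$ and the closedness of the finite-dimensional space $N$. The paper instead argues by duality: for $([u],[f])\in\ov{T_\mn}=T_\mx^*$, Lagrange's identity \eqref{Lagrange} forces $\sm{u(a)\\u(b)}$ into the kernel of every functional $(-v(a)^*J,\,v(b)^*J)$ with $([v],[g])\in T_\mx$; the variation of constants formula (Lemma \ref{varconst2}) together with Lemma \ref{L:5.1} produces $2n-\dim\mc L_0$ linearly independent such functionals, so their common kernel has dimension $\dim\mc L_0$ and, containing $N$, must equal $N$. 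Your route is more elementary and avoids the dimension count, at the price of invoking the closed-graph-theorem-based Lemma \ref{L:6.2}; the paper's route yields the additional information that $N$ is precisely the joint kernel of all boundary functionals arising from $T_\mx$. Your converse direction and the deduction of $W\cap N=\{0\}$ and $\dim W=2n-2\dim\mc L_0$ coincide with the paper's argument; note only that the orthogonal decomposition $T_\mx=\ov{T_\mn}\oplus D_i\oplus D_{-i}$ you appeal to is Theorem \ref{vonNeumann1} rather than Theorem \ref{vonNeumann2}.
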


\begin{proof}
Suppose $\sm{u(a)\\ u(b)}\in N$ and that $v\in\mc L_0$ has the same boundary values as $u$ so that $(u-v)(a)=(u-v)(b)=0$ and $J(u-v)'+q(u-v)=wf$.
Then, by Lagrange's identity \eqref{Lagrange}, we obtain $\<u-v,h\>=\<f,r\>$ for all $(r,h)\in\mc T_\mx$.
Hence $([u-v],[f])=([u],[f])\in T_\mx^*=\ov{T_\mn}$ and this proves one direction of the first claim.

Conversely, suppose $([u],[f])\in\ov{T_\mn}$ and $([v],[g])\in T_\mx=\ov{T_\mn}^*$.
Then, by Lagrange's identity \eqref{Lagrange} we have $v(b)^*Ju(b)-v(a)^*Ju(a)=0$, \ie,  $\sm{u(a)\\ u(b)}$ is in the kernel of the linear functional given by the row $(-v(a)^*J,v(b)^*J)$.
For arbitrary $v_0\in\bb C^n$ and $g\in L^2(w)$ we have a solution of the initial value problem $Jv'+qv=wg$, $v(a)=v_0$.
The variation of constants formula (Lemma \ref{varconst2}) gives
$$v(b)=U(b,0)\big(v(a)+J^{-1}\int_{(a,b)} U(\cdot,0)^*wg\big).$$
Denoting $\dim \mc L_0$ by $k$, Lemma \ref{L:5.1} shows that the vectors $\int_{(a,b)} U(\cdot,0)^*wg$ span a space of dimension $n-k$.
Thus we have $2n-k$ linearly independent functionals whose kernels contain $\sm{u(a)\\ u(b)}$, \ie,
$\sm{u(a)\\ u(b)}$ is in the kernel $K$ of a matrix of rank $2n-k$ so that $\dim K=k$.
Now note that $N\subset K$ and that $\dim N=k$.
It follows that $K=N$.

Finally suppose that $\mc B([v],[g])\in N$ for some $([v],[g])\in D_i\oplus D_{-i}$.
Abbreviating $E([v],[g])$ by $v$ we have $\sm{v(a)\\ v(b)}\in N$ and thus, by the first part of the proof, $([v],[g])\in\ov{T_\mn}$.
This implies $[v]=[g]=[0]$ and, since $E([0],[0])=0$ also $v(a)=v(b)=0$.
In particular, $\ker \mc B=\{0\}$ so that $\dim\ran\mc B=\dim(D_i\oplus D_{-i})$.
\end{proof}

As in Appendix \ref{ALR} we now define $\mc J:L^2(w)\exsum L^2(w): (u,f)\mapsto (f,-u)$ and recall that $\mc J(D_i\oplus D_{-i})=D_i\oplus D_{-i}$.
By $\bb J$ we shall denote the matrix $\sm{J&0\\0&-J}$.

\begin{thm}\label{T5.3}
Suppose $a$ and $b$ are regular endpoints.
Let $\tilde A\in\bb C^{n_+\times 2n}$ have the following properties: (1) $\tilde A$ has full rank $n_+$,
(2) $\bb J^{-1}\tilde A^*(\bb C^{n_+})\subset W$, and (3) $\tilde A\bb J^{-1}\tilde A^*=0$.
Then
$$T=\{([u],[f]): (u,f)\in\mc T_\mx,\ \tilde A \sm{u(a)\\u(b)}=0\}$$
is a self-adjoint restriction of $T_\mx$.

Conversely, every self-adjoint restriction $T$ of $T_\mx$ is determined in this way by a matrix $\tilde A$ with the given properties.
\end{thm}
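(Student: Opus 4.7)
The plan is to translate the abstract characterization of Theorem~\ref{T:5.1} into concrete linear conditions on $\sm{u(a)\\u(b)}$ using the evaluation operator $E$ from Definition~\ref{D:5.2} and the boundary-value map $\mc B$ introduced before Lemma~\ref{L5.2}. The pivotal identity, valid because both endpoints are regular so that $u^\pm(a)=u(a)$ and $u^\pm(b)=u(b)$ for every representative produced by $E$, is
$$(g^*Ju)^-(b)-(g^*Ju)^+(a)=g(b)^*Ju(b)-g(a)^*Ju(a)=-\sm{g(a)\\g(b)}^*\bb J\sm{u(a)\\u(b)},$$
which converts every Wronskian-type boundary functional appearing in Theorem~\ref{T:5.1} into a bilinear form in the boundary data.

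For the forward direction, given a self-adjoint restriction $T$, Theorem~\ref{T:5.1} furnishes linearly independent $(v_1,g_1),\ldots,(v_{n_+},g_{n_+})\in V=D_i\oplus D_{-i}$ with $\<(v_k,g_k),(g_\ell,-v_\ell)\>=0$ cutting out $T$ through those boundary functionals. Choose $E$-representatives, set $\gamma_j=\sm{g_j(a)\\g_j(b)}$, and define $\tilde A$ to have $j$-th row $\gamma_j^*\bb J$; then $\tilde A^*=-\bb J\Gamma$ with $\Gamma=(\gamma_1,\ldots,\gamma_{n_+})$, and the pivotal identity shows that $\tilde A\sm{u(a)\\u(b)}=0$ reproduces the defining boundary conditions of $T$. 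Property (2) is immediate from $\mc JV=V$: each $(g_j,-v_j)\in V$, so $\gamma_j=\mc B([g_j],[-v_j])\in W$. Property (3) is the direct computation $(\tilde A\bb J^{-1}\tilde A^*)_{k,\ell}=-\gamma_k^*\bb J\gamma_\ell=(g_k^*Jg_\ell)^-(b)-(g_k^*Jg_\ell)^+(a)=\<(v_k,g_k),(g_\ell,-v_\ell)\>=0$. For (1), since $\mc J$ is bijective the family $\{(g_j,-v_j)\}$ is linearly independent in $V$, and Lemma~\ref{L5.2}, which provides both the injectivity of $\mc B|_V$ and the dimension match $\dim W=2n_+=\dim V$, translates this into linear independence of $\{\gamma_j\}$, i.e., full rank of $\tilde A$.

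For the converse, the dictionary runs in reverse. Given $\tilde A$ satisfying (1)--(3), write the columns of $\bb J^{-1}\tilde A^*$ as $\gamma_j\in W$ by (2); injectivity of $\mc B|_V$ produces unique elements $([g_j],[-v_j])\in V$ with $\mc B([g_j],[-v_j])=\gamma_j$, and $\mc J$-invariance of $V$ yields $(v_j,g_j)\in V$. Full rank of $\tilde A$ gives linear independence of $\{\gamma_j\}$ and hence of $\{(v_j,g_j)\}$, while condition (3) translates via the pivotal identity back to $\<(v_k,g_k),(g_\ell,-v_\ell)\>=0$. Theorem~\ref{T:5.1} then certifies that the associated relation is a self-adjoint restriction of $T_\mx$, and its defining boundary functionals coincide with $\tilde A\sm{u(a)\\u(b)}=0$, again by the pivotal identity.

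The main obstacle is the careful interplay between equivalence classes in $L^2(w)\exsum L^2(w)$ and the pointwise boundary values obtained only after selecting $E$-representatives; in the absence of the definiteness condition this distinction is genuine. Once Lemma~\ref{L5.2} supplies both the injectivity of $\mc B|_V$ and the dimension match $\dim W=2n_+$, the condition $\bb J^{-1}\tilde A^*(\bb C^{n_+})\subset W$ is seen to parameterize precisely those rows of $\tilde A$ that arise from the data of Theorem~\ref{T:5.1}, and the whole argument becomes a clean bijective correspondence between Theorem~\ref{T:5.1} and the present theorem.
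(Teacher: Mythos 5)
Your proof is correct and takes essentially the same route as the paper, whose entire proof consists of the remark that with $\tilde A^*=\bb J\sm{g_1(a)&\dots&g_{n_+}(a)\\ g_1(b)&\dots&g_{n_+}(b)}$ the theorem is an immediate consequence of Theorem~\ref{T:5.1}. You have simply written out the dictionary (the Wronskian-to-$\bb J$-form identity, the role of $\mc B$, Lemma~\ref{L5.2} for injectivity and the dimension count) that the paper leaves implicit.
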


\begin{proof}
With $\tilde A^*=\bb J\sm{g_1(a)& ...&g_{n_+}(a)\\ g_1(b)& ... &g_{n_+}(b)}$ this is an immediate consequence of Theorem~\ref{T:5.1}.
\end{proof}

\subsection{Green's function and the Fourier transform}
In the case of regular endpoints there are several simplifications in the process of constructing Green's function when compared to the one in Section \ref{sst.1}.
First of all we may choose $x_0$ to be one of the endpoints, say $a$.
Moreover, since all solutions of $Ju'+qu=\la wu$ are in $\mc L^2(w)$ the projections $P_\pm(\la)$ are both equal to the identity so that conditions \eqref{170811.1} and \eqref{170811.2} become vacuous.
Equation \eqref{170811.3}, collecting the $n_+=n_-$ boundary conditions, becomes
$$(g(b)^*J U(b,\la)-g(a)^*J\id)u_0 = -\int g(b)^*J U(b,\la)J^{-1} U(\cdot,\ol)^*wf.$$
Finally, taking, instead of all, only the linearly independent rows of $(\id-P)u_0=0$ into account we obtain an $n\times n$-matrix $F(\la)$.
As before $F(\la)$ has trivial kernel, \ie, it is invertible, if $\la$ is in the resolvent set.
We obtain $(E_\la f)(x)=\int U(x,\la)\tilde H(x,\cdot,\la)U(\cdot,\ol)^*wf$ where
$$\tilde H(x,y,\la)=M(\la)+\frac12(\id-P)J^{-1}P-\frac12 J^{-1}P\sgn(y-x)+S(x,\la)\chi_{\{x\}}(y)$$
with
$$M(\la)=-PF(\la)^{-1}\sm{g(b)^*\\ 0}J U(b,\la)J^{-1}P+\frac12 P J^{-1}P$$
and $S(x,\la)=\frac14U(x,\la)^{-1}(U^+(x,\la)-U^-(x,\la))J^{-1}$.

For the remainder of the section we will require the validity of Hypothesis \ref{H:6.1}, which reduces here to the condition $\Lambda\cap\bb R=\emptyset$.
As in Section \ref{sst.2} we denote the measure which occurs in the Herglotz-Nevanlinna representation of $M$ by $\nu$.
If $\la$ is an eigenvalue of multiplicity $k$, then Lemma \ref{L:6.6} implies that $\Delta_\nu(\la)$ is a matrix of rank $k$ and the columns of $U(x,\la)\Delta_\nu(\la)$ span the eigenspace associated with $\la$.

\begin{thm}
Suppose both endpoints of the interval $(a,b)$ are regular and $\Lambda\cap\bb R$ is empty.
If $T$ is a self-adjoint restriction of $T_\mx$, then its spectrum consists of isolated eigenvalues of finite multiplicity.
Their number may be finite or infinite and, in the latter case, they may accumulate only at $\infty$, $-\infty$ or both.
We denote by $Z$ a suitable subset of $\bb Z$ to label the eigenvalues of $T$, repeated according to their multiplicity, so that $\la_n \leq \la_{n+1}$.
The Fourier transform is then given by the sequence  $n\mapsto \hat f_n= \int U(\cdot,\ol_n)^*w f$ and, consequently, the inverse transform is a sum or series where each term is an eigenfunction of $T$.
Specifically,
$$f(x)=\sum_{n\in Z} U(x,\la_n)\Delta_\nu(\la_n)\hat f_n.$$
\end{thm}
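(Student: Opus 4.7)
The plan is to show that under the hypotheses the $M$-function extends to a meromorphic function on all of $\bb C$ with poles confined to the real line, and then to read off the spectral picture from that extension via the Nevanlinna representation. First I would recall the explicit formula
$$M(\la)=-PF(\la)^{-1}\sm{g(b)^*\\ 0}JU(b,\la)J^{-1}P+\tfrac12 PJ^{-1}P$$
derived at the start of this section. By Theorem \ref{T:2.3.1}, $U(b,\cdot)$ is analytic on $\bb C\setminus\Lambda$, so $F(\la)$ is an analytic $n\times n$ matrix-valued function there, and Cramer's rule makes $F(\la)^{-1}$ meromorphic on $\bb C\setminus\Lambda$. The argument given earlier (right after \eqref{170816.1}, applied in the regular setting) shows $\ker F(\la)=\{0\}$ precisely when $\la$ is not an eigenvalue of $T$, so the poles of $M$ on $\bb C\setminus\Lambda$ are exactly the eigenvalues of $T$. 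Since $\Lambda$ is closed and consists of isolated points disjoint from $\bb R$, and since $M$ is Herglotz so that $M(\ol)=M(\la)^*$ prevents non-real poles, the poles are real, isolated, and constitute the spectrum; the eigenvalues can therefore accumulate only at $\pm\infty$, and a set $Z\subset\bb Z$ suffices to enumerate them as $\la_n\le\la_{n+1}$.

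Next I would identify the Nevanlinna measure $\nu$. Because $M$ is meromorphic in a neighborhood of $\bb R$, it is real-analytic on the complement of its poles, so $\Im M(x+i0)=0$ away from the $\la_n$; Stieltjes inversion then forces $\nu$ to be purely atomic with support $\{\la_n:n\in Z\}$. At each pole $\la_n$ the Laurent expansion $M(\la)=A_n/(\la-\la_n)+\text{holomorphic}$ has Herglotz-non-negative residue $A_n\ge0$, and one verifies $\Delta_\nu(\la_n)=A_n$. The multiplicity of $\la_n$ equals $\dim U(\cdot,\la_n)\operatorname{ran}(\Delta_\nu(\la_n))\le n$ because the columns of $U(\cdot,\la_n)\Delta_\nu(\la_n)$ span the $\la_n$-eigenspace in $\mc H_0$ (by the matching of $\mc F$ with the spectral projections via Lemma \ref{L:6.6}), so every eigenvalue has finite multiplicity.

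With $\nu$ purely atomic the Hilbert space $L^2(\nu)$ reduces to the weighted $\ell^2$-space of sequences $\hat f=(\hat f_n)_{n\in Z}$ with $\|\hat f\|_\nu^2=\sum_n \hat f_n^*\Delta_\nu(\la_n)\hat f_n$, and Theorem~\ref{t:main} specializes at once. For the forward transform, the columns of $U(\cdot,\ol_n)$ lie in $\mc L^2(w)$ (solutions are bounded on $(a,b)$ and $\tr w$ is finite), so the integral $\int U(\cdot,\ol_n)^*wf$ converges for every $f\in L^2(w)$; continuity of $\mc F$ and the dominated convergence argument underlying the extension in Lemma~\ref{L:6.6} then identify $(\mc Ff)(\la_n)$ with this integral. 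For the inverse, part~(2) of Theorem \ref{t:main} gives $\mc G\hat f(x)=\int U(x,\cdot)\nu\hat f$, which in the atomic case is exactly the sum $\sum_n U(x,\la_n)\Delta_\nu(\la_n)\hat f_n$, and each term is an eigenfunction because $U(\cdot,\la_n)\Delta_\nu(\la_n)$ solves $Ju'+qu=\la_n wu$.

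The main obstacle is the first paragraph: confirming that $F(\la)^{-1}$ is meromorphic on all of $\bb C$ (not just on $\bb C\setminus\Lambda$) and that no extra singularities are introduced near points of $\Lambda$. Here one uses that $\Lambda$ is a closed set of isolated \emph{non-real} points, so the analytic extension of $M$ across the real axis sees only the eigenvalue poles, while any singularity at $\la\in\Lambda$ is either removable (by Lemma \ref{L:6.3} in the non-real half-planes where $M$ is Herglotz) or controlled by the factorization above. Once this is settled, the remainder of the argument is essentially bookkeeping in the Nevanlinna representation.
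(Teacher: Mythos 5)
Your proposal is correct in outline but follows a genuinely different route from the paper. The paper never touches the explicit formula for $M$ in this proof: it shows instead that the resolvent $R_\la$ is \emph{compact}, by combining three ingredients --- the columns of $G(x,\cdot,\la)^*$ lie in $L^2(w)$, so weak convergence of $f_j$ to $0$ forces pointwise convergence $(E_\la f_j)(x)\to0$; the bound $\vertiii{E_\la f_j}\leq C\|f_j\|$ from Lemma \ref{L:6.2}, which is where the regularity of both endpoints enters ($Q$ and $W$ of bounded variation on all of $(a,b)$); and dominated convergence against the finite measure $\tr w$ --- and then invokes the standard spectral theorem for self-adjoint operators with compact resolvent, reading off the atomicity of $\nu$ \emph{afterwards} from Lemma \ref{L:6.6}. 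You argue in the opposite direction: meromorphy of $M$ near $\bb R$ first, atomicity of $\nu$ by Stieltjes inversion, and only then discreteness of the spectrum via the unitary equivalence of $T_0$ with multiplication in $L^2(\nu)$. Your route is closer to the classical Titchmarsh treatment and yields more explicit information (eigenvalues as zeros of $\det F$, residues equal to $\Delta_\nu(\la_n)$), while the paper's compactness argument is softer and avoids the Green's-function algebra entirely. If you carry your plan out, three details need attention, none of them fatal: (i) analyticity of $\la\mapsto U(b,\la)$ at the regular endpoint is not literally covered by Theorem \ref{T:2.3.1}, which treats interior points, so the estimates must be pushed to $x=b$ using Remark \ref{R:2.4}; (ii) the argument after \eqref{170816.1} gives only one implication ($\la\in\rho(T)$ implies $\ker F(\la)=\{0\}$), and you also need the converse for real $\la$ --- an eigenvalue produces a nontrivial kernel of $F(\la)$, which holds in the regular case because every solution of $Ju'+qu=\la wu$ is automatically in $\mc L^2(w)$; and (iii) the Stieltjes-inversion step identifying $\supp\nu$ with the set of real poles deserves an explicit line, since the formula for $M$ is only derived on $\rho(T)\setminus\Lambda$ and must first be recognized as a meromorphic continuation across $\bb R$.
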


\begin{proof}
We will prove that $R_\la$ is a compact operator as long as $\la\in\rho(T)$.
This means, by definition, that $\|R_\la f_n\|$ converges to $0$, when $f_n\in L^2(w)$ converges weakly to $0$.
Since, for fixed $x\in(a,b)$, the function $G(x,\cdot,\la)^*$ is in $L^2(w)$, it follows that $(E_\la f_n)(x)$ tends to $0$ as $n$ tends to infinity.
Next note that $u_n=R_\la f_n$ means that $(u_n,\la u_n+f_n)\in T$.
Since $Q$ and $W$ are of bounded variation, Lemma~\ref{L:6.2} and the fact that $R_\la$ is bounded give
$\vertiii{E_\la f_n}=\vertiii{E(u_n,\la u_n+f_n)}\leq C\|f_n\|$ for some appropriate constant $C$.
Since $f_n$ is weakly convergent we have that $\|f_n\|$ and hence $\vertiii{E_\la f_n}$ as well as $|(E_\la f_n)(x)|_1$ are bounded.
Thus, by the dominated convergence theorem,
$$\|R_\la f_n\|^2=\int (E_\la f_n)^* \RN (E_\la f_n)\tr w$$
tends to zero, since $\tr w$ is a finite measure and each component of $E_\la f_n$ and $\RN$, the Radon-Nikodym derivative of $w$ with respect to $\tr w$, is bounded.

A standard theorem in spectral theory for linear operators shows now that the operator part of $T$ has only isolated eigenvalues of finite multiplicity.
Lemma \ref{L:6.6} implies that the support of $\nu$ is equal to the collection of all eigenvalues $\{\la _n:n\in Z\}$ of $T$ and thus discrete.
In particular, the Fourier transform of $f\in L^2(w)$ is completely determined by the vectors $\hat f_n= \int U(\cdot,\ol_n)^*w f$, $n\in Z$.
Accordingly, the inverse transform is either a finite sum, a series on $\bb N$ or a series on $\bb Z$.
\end{proof}

\section{Special case: \texorpdfstring{$n=2$}{n=2} and real coefficients}\label{sn2}
We now specialize to the case $n=2$ and assume that the coefficients $J$, $q$, and $w$ are real.
This means that we must have $J=\beta\sm{0&-1\\ 1&0}$ with $\beta\in\bb R\setminus\{0\}$ (and we could restrict ourselves to the case $\beta=1$ by employing a coordinate transform).
It follows that $Ju'+qu=\la wu$ if and only if $J\ov{u}'+q\ov{u}=\ol w\ov{u}$ and therefore the deficiency indices $n_+$ and $n_-$ are the same and we have always self-adjoint restrictions of $T_\mx$.

\begin{lem}\label{L:7.2}
For each $\la\in\bb C\setminus \bb R$ and $c\in(a,b)$ there are non-trivial solutions of $Ju'+qu=\la wu$ in $L^2(w\chi_{(a,c)})$ and $L^2(w\chi_{(c,b)})$, respectively.
\end{lem}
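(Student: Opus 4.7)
The plan is to run the classical Weyl nested-disk argument, adapted to the distributional setting; I describe the right endpoint, the left being entirely symmetric. Fix $\la\in\bb C\setminus\bb R$ and, using a continuity argument based on Theorem~\ref{T:2.3.1}, assume first $\la\notin\Lambda$. Choose a base point $x_0\in[c,b)$ at which $Q$ and $W$ are continuous, let $U(\cdot,\la)$ be the fundamental matrix of $Ju'+qu=\la wu$ normalized by $U(x_0,\la)=\id$, and parametrize the two-dimensional solution space by $u_m:=U(\cdot,\la)\sm{1\\m}$ for $m\in\bb C$; the direction $m=\infty$ will be disposed of by a separate inspection.

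The basic identity, derived by the same integration-by-parts computation as in the proof of Lemma~\ref{T4.1.4a} but with $u=v=u_m$ both satisfying $Ju'+qu=\la wu$, reads
$$2i\,(\Im\la)\int_{[x_0,d)}u_m^*wu_m=(u_m^*Ju_m)^-(d)-(u_m^*Ju_m)^+(x_0),$$
and the value at $x_0$ works out to $\sm{1\\m}^*J\sm{1\\m}=-2i\beta\,\Im m$. I would then introduce the Weyl disk
$$D_d:=\Big\{m\in\bb C:\int_{[x_0,d)}u_m^*wu_m\le\frac{\beta\,\Im m}{\Im\la}\Big\},$$
equivalently $\{m:(u_m^-(d))^*J\,u_m^-(d)/(2i\Im\la)\le 0\}$ by the identity above. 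The algebraic input specific to $n=2$ is that $J/i$ is an indefinite Hermitian form of signature $(1,1)$ on $\bb C^2$; pulling it back along the invertible matrix $U^-(d,\la)$ (invertibility furnished by Lemma~\ref{varconst}) preserves the signature, so the zero locus in the affine chart $\xi=\sm{1\\m}$ is a Euclidean circle, and $D_d$ is the closed disk it bounds.

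It remains to let $d\uparrow b$. The monotonicity $d\mapsto \int_{[x_0,d)}u_m^*wu_m$ gives $D_{d_2}\subset D_{d_1}$ for $d_1<d_2$, and each $D_d$ is non-empty because the regular-endpoint theory of Section~\ref{sre} applied to the truncated problem on $(x_0,d)$ produces, from any choice of self-adjoint boundary condition at $d$, a point on $\partial D_d$. Being compact, non-empty, and nested, the disks share a common point $m_0$; then $\int_{[x_0,b)}u_{m_0}^*wu_{m_0}\le \beta\,\Im m_0/\Im\la<\infty$ exhibits a non-trivial $L^2(w\chi_{(x_0,b)})$-solution, and local boundedness of $u_{m_0}$ together with local finiteness of $\tr w$ upgrades this to $L^2(w\chi_{(c,b)})$. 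I expect the main difficulty to be the clean verification of non-emptiness and uniform boundedness of the disks $D_d$: this genuinely exploits the $(1,1)$-signature of $J/i$ (the \emph{$n=2$ miracle}) together with the regular-endpoint machinery of Section~\ref{sre} applied to the truncated problem, so the argument unavoidably leans on earlier compact-subinterval results.
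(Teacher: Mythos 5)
Your route --- the classical Weyl nested-disk construction --- is genuinely different from the paper's. The paper instead builds, via Lemma~\ref{L:5.1} and the variation of constants formula, two compactly supported elements $(u,f)$ and $(v,g)$ of the maximal relation $\tilde T_\mx$ for the truncated weight $w\chi_{(c,b)}$ with $(v^*Ju)^+(a)=v(c)^*Ju(c)=\beta\neq0$, and observes via Lagrange's identity \eqref{Lagrange} that $\tilde T_\mx$ could not contain such a pair if it were self-adjoint; hence the deficiency spaces $\tilde D_\pm(\la)$ are non-trivial for every non-real $\la$. That abstract argument is insensitive both to $\Lambda$ and to the failure of the definiteness condition. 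Your construction, as written, has a step that fails precisely in the non-definite situation this paper is built to handle: $D_d$ is the closed \emph{disk} bounded by the zero locus only when the coefficient of $|m|^2$ in the pulled-back form, namely $c_d=\int_{[x_0,d)}v^*wv$ with $v=U(\cdot,\la)\sm{0\\ 1}$, is strictly positive. The signature-$(1,1)$ argument alone only tells you that the zero locus is a generalized circle (possibly a line) and that $D_d$ is one of the two regions it bounds; if $c_d=0$ that region is a closed half-plane, compactness is lost, and the nested-intersection step collapses. This is not hypothetical: in the paper's Example II ($q=0$, $w=\sm{1&0\\ 0&0}$) the second column of $U(\cdot,\la)$ satisfies $wv\equiv0$, so $c_d=0$ for every $d$. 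The gap is repairable by a case distinction --- if $c_d=0$ for all $d<b$ then $v$ itself is the sought non-trivial $L^2(w\chi_{(c,b)})$ solution, and otherwise one restricts to $d$ large enough that $c_d>0$ --- but it must be made explicitly.

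Second, your reduction to $\la\notin\Lambda$ by ``a continuity argument based on Theorem~\ref{T:2.3.1}'' does not work as stated: non-real points of $\Lambda$ do occur (Example I has $\Lambda=\{\pm2i\}$), and at such $\la$ the fundamental matrix $U(\cdot,\la)$ need not exist at all, so there is nothing to pass to the limit with, and Theorem~\ref{T:2.3.1} is only available on $\bb C\setminus\Lambda$ in the first place. The correct tool is the constancy of $\dim\tilde D_\la$ on each open half-plane (Corollary~\ref{C:B.3}), exactly as the paper exploits in Lemma~\ref{L:7.1}: non-triviality for one $\la$ per half-plane suffices. With these two repairs your argument goes through and has the virtue of exhibiting the Weyl point $m_0$ concretely, but it is substantially longer than the paper's proof and leans on the same abstract deficiency-index machinery anyway to cover $\Lambda$.
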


\begin{proof}
We may assume that $\mc L_0$ is trivial since otherwise the claim is.
Let $U$ be a fundamental matrix for $Ju'+qu=0$ such that $U(x_0)=\id$ for some $x_0\in(a,b)$ and $[c,d]$ an interval in $(a,b)$ containing $x_0$ and such that the integrals $\int U^*w f$ reach all of $\bb C^2$ when $f$ varies among the elements of $L^2(w)$ supported in $[c,d]$.
This is possible according to Lemma \ref{L:5.1}.
We may also choose $c$ and $d$ to be points of continuity of $Q$ and $W$.
It follows now from the variation of constants formula (Lemma \ref{varconst2}) that there is an $f\in L^2(w)$ with $\supp f\subset[c,d]$ and a solution $u$ of $Ju'+qu=wf$ such that $u(c)=(1,0)^\top$ and $u(x)=0$ for $x>d$.
Similarly, there is a $g\in L^2(w)$ with $\supp g\subset[c,d]$ and a solution $v$ of $Jv'+qv=wg$ such that $v(c)=(0,1)^\top$ and $v(x)=0$ for $x>d$.
From Lemma \ref{T4.1.4a} we get $(v^*Ju)^+(a)=v(c)^*Ju(c)=\beta\neq0$.

Now consider the relations $\tilde T_\mn$ and $\tilde T_\mx$ determined by $J$, $q$, and $w\chi_{(c,b)}$.
Suppose that the deficiency indices of $\tilde T_\mn$ are $0$ implying that $\tilde T_\mx$ is self-adjoint.
Since $(u,f)$ and $(v,g)$ are in $\tilde T_\mx$ we get from Lagrange's identity \eqref{Lagrange} the absurdity that $0=\<v,f\>-\<g,u\>=v(a)^*Ju(a)\neq0$.
It follows that the deficiency spaces $\tilde D_\pm(\la)$ cannot be trivial.
Thus there is a solution of $Ju'+qu=\la wu$ on $(a,b)$ such that $\|u\chi_{(c,b)}\|<\infty$.
Replacing $c$ with any other number in $(a,b)$ in this last claim will not invalidate it.
A similar argument works, of course, with the roles of $a$ and $b$ reversed.
\end{proof}

\begin{lem}\label{L:7.1}
All solutions of $Ju'+qu=\la wu$ are in $L^2(w\chi_{(a,c)})$ for all $c\in(a,b)$ and all $\la\in\bb C\setminus\bb R$, if this holds true for one such pair $(c,\la)$.
The corresponding statement is also true for the endpoint $b$.
\end{lem}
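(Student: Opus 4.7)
The plan is to treat the endpoint $a$ only (the case of $b$ being symmetric) and to deduce square-integrability near $a$ for every $\la\in\bb C\setminus\bb R$ from the hypothesis by a perturbation/Gr\"onwall argument in which $Ju'+qu=\la_0 wu$ serves as the base equation. First I would note that, since solutions of any fixed equation $Ju'+qu=\la wu$ are of locally bounded variation and hence bounded on compact subintervals, square-integrability near $a$ does not depend on which $c\in(a,b)$ is used: if it holds over $(a,c_0)$ it holds over $(a,c)$ for every $c\in(a,b)$. Thus the hypothesis furnishes, for our chosen $\la_0$, an $L^2(w\chi_{(a,c)})$ basis of the solution space for every $c\in(a,b)$, and because $J,q,w$ are real, complex conjugation of solutions transfers this property to the $\ov{\la_0}$-equation. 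Fix a point $x_0\in(a,b)$ of continuity of $Q$ and $W$ and, as in Section~\ref{sst.2}, let $U(\cdot,\cdot)$ be the fundamental matrix normalized by $U(x_0,\cdot)=\id$. The columns of $U(\cdot,\la_0)$ and of $U(\cdot,\ov{\la_0})$ then all belong to $L^2(w\chi_{(a,x_0)})$.

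Given $\la\in\bb C\setminus\bb R$ and a solution $v$ of $Jv'+qv=\la wv$, I would rewrite the equation as $Jv'+qv=\la_0 wv+(\la-\la_0)wv$ and apply Lemma~\ref{varconst2} with spectral parameter $\la_0$ and inhomogeneity $f=(\la-\la_0)v$. Suppressing one-sided/balanced decorations, this produces, for $x\le x_0$,
$$v(x)=U(x,\la_0)v(x_0)-(\la-\la_0)\,U(x,\la_0)J^{-1}\!\int_{(x,x_0]}\!U(\cdot,\ov{\la_0})^* wv.$$
Setting $K(x):=\int_{(x,x_0]}U(\cdot,\ov{\la_0})^* wv$ and $\psi(x):=\int_{(x,x_0)}v^* wv$, the key step is a matrix Cauchy--Schwarz estimate in the semi-inner product $\<f,g\>_w=\int f^* wg$: expanding each component of $K(t)$ against the columns of $U(\cdot,\ov{\la_0})$ should yield the uniform pointwise bound
$$|K(t)|^2\le C_0\,\psi(x)\qquad\text{for all }t\in(x,x_0),$$
where $C_0$ is the sum of the squared $L^2(w\chi_{(a,x_0)})$-norms of those columns. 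A companion estimate bounds $\|U(\cdot,\la_0)J^{-1}K\|_{L^2(w\chi_{(x,x_0)})}^2$ by $C_0'\sup_{t\in(x,x_0)}|K(t)|^2$, with $C_0'$ the analogous constant for the columns of $U(\cdot,\la_0)J^{-1}$. Substituting into the variation-of-constants identity and taking $L^2(w\chi_{(x,x_0)})$-norms yields an inequality of the form
$$\psi(x)^{1/2}\le A+B(x_0)\,\psi(x)^{1/2},$$
where $A=\|U(\cdot,\la_0)v(x_0)\|_{L^2(w\chi_{(a,x_0)})}$ is independent of $x$ and $B(x_0)=2|\la-\la_0|\sqrt{C_0 C_0'}$.

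The decisive observation is that both $C_0$ and $C_0'$ tend to zero as $x_0\downarrow a$, by monotone convergence applied to the finite $L^2(w\chi_{(a,c_0)})$-integrals supplied by the hypothesis. One may therefore choose $x_0$ close enough to $a$ that $B(x_0)<1/2$, after which the $B(x_0)\psi(x)^{1/2}$ term is absorbed and $\psi(x)$ is bounded uniformly in $x\in(a,x_0)$. Letting $x\downarrow a$ gives $v\in L^2(w\chi_{(a,x_0)})$, and splitting $(a,c)=(a,x_0)\cup[x_0,c)$ together with local boundedness of $v$ on $[x_0,c]$ extends this to $v\in L^2(w\chi_{(a,c)})$ for every $c\in(a,b)$. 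The step I expect to require the greatest care is the matrix/measure Cauchy--Schwarz chain together with the bookkeeping of $v^\pm$ versus $v^\#$ in Lemma~\ref{varconst2}; once the pointwise bound $|K(t)|^2\le C_0\psi(x)$ is secured and the vanishing of $C_0,C_0'$ as $x_0\downarrow a$ is justified, the absorption step is routine.
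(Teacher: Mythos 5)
Your route is genuinely different from the paper's, which disposes of this lemma in two lines: it forms the minimal relation $\tilde T_\mn$ for the truncated weight $w\chi_{(a,c)}$ and invokes the constancy of $\dim \tilde D_\la$ in each half-plane (Corollary \ref{C:B.3}), reality of the coefficients transferring the conclusion between the two half-planes. That soft argument buys brevity at the cost of leaning on the abstract relation machinery (and on the identification of $\tilde D_\la$ with the $L^2(w\chi_{(a,c)})$-solutions modulo $\tilde{\mc L}_0$); your classical Weyl-type perturbation argument is self-contained and quantitative, and its skeleton (rewrite the $\la$-equation as the $\la_0$-equation with inhomogeneity $(\la-\la_0)wv$, Cauchy--Schwarz in the semi-inner product, absorb) is sound.

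There is, however, a genuine gap at the decisive limiting step. You normalize $U$ by $U(x_0,\cdot)=\id$ and then let $x_0\downarrow a$, claiming $C_0,C_0'\to0$ by monotone convergence. But the columns of $U(\cdot,\la_0)$ and $U(\cdot,\ov{\la_0})$ change with $x_0$: you are not integrating a fixed function over a shrinking set. Relative to a basis fixed at some $x_1$, the renormalizing matrix $U(x_0,\la_0)^{-1}$ need not stay bounded as $x_0\to a$ (solutions may blow up pointwise while remaining in $L^2(w)$ near $a$), and the product $C_0C_0'$ is not invariant under change of normalization, so the claimed convergence does not follow. Two standard repairs: (i) keep $U$ normalized at a fixed $x_0$ and move only the anchor of the Duhamel integral, i.e.\ write $v^+(x)=U^+(x,\la_0)\bigl(c_{x_1}-J^{-1}\int_{(x,x_1]}U(\cdot,\ov{\la_0})^*wf\bigr)$ for $x\le x_1$ and let $x_1\downarrow a$, so that $C_0,C_0'$ become tails of fixed finite integrals; or (ii) note via Lemma \ref{T4.1.4a} that the kernel $U(x,\la_0)J^{-1}U(t,\ov{\la_0})^*$ is independent of the normalization point and estimate it against a basis fixed once and for all. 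With either repair the absorption goes through. A smaller caveat: your appeal to Lemma \ref{varconst2} presumes $\la_0\notin\Lambda$, which is not guaranteed at this point of Section \ref{sn2} since Hypothesis \ref{H:6.1} is only imposed later; the paper's deficiency-index argument sidesteps this entirely.
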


\begin{proof}
Let $\tilde T_\mn$ be the relation determined by $J$, $q$, and $w\chi_{(a,c)}$.
Then our claim follows from the fact that the dimension of the deficiency spaces $\tilde D_\pm(\la)$ of $\tilde T_\mn$ does not vary with $\la$.
\end{proof}

For $c\in(a,b)$ and $\la\in\bb C\setminus\bb R$ we have now the following dichotomy: the space of solutions of $Ju'+qu=\la wu$ which lie in $L^2(w\chi_{(c,b)})$ is either one-dimensional or two-dimensional.
This characterization does not depend on the choice of $c$ or, in view of Lemmas \ref{L:7.1} and \ref{L:7.2}, on the choice of $\la\in\bb C\setminus\bb R$.
In the former situation we say that we have the \textit{limit-point case at $b$}\index{limit-point case} or, for short, that $b$ is limit-point, while in the latter situation we say that we have the \textit{limit-circle case at $b$}\index{limit-circle case} or that $b$ is limit-circle.
Of course, we have an analogous dichotomy for $a$ and, for any given problem, we have either the limit-point case or the limit-circle case at $a$.
The terminology goes back to Weyl's famous 1910 paper \cite{41.0343.01} on the subject.

\subsection{Definiteness condition violated}
Now assume $\dim\mc L_0=1$.
We have the limit-point case at $a$ or $b$ if and only if $n_+=n_-=0$ and the limit-circle case at both $a$ and $b$ if and only if $n_+=n_-=1$.

\begin{thm}
Suppose that $n=2$ and that $J$, $q$ and $w$ are real.
Furthermore, assume that $\dim\mc L_0=1$, $n_\pm=0$, and $\ker\Delta_w(x)\subset \ker\Delta_w(x)J^{-1}\Delta_q(x)$ for all $x\in (a,b)$,
Then $T_\mx=T_\mx^*=\{0\}\exsum L^2(w)$.
\end{thm}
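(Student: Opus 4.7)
The self-adjointness $T_\mx=T_\mx^*$ is automatic: since $n_\pm=0$, the minimal relation is essentially self-adjoint, so $T_\mx=T_\mn^*=\overline{T_\mn}=T_\mn^{**}=T_\mx^*$. The plan for the identification $T_\mx=\{0\}\exsum L^2(w)$ is to prove $\mc H_\infty=L^2(w)$, for then the decomposition recalled at the end of Section \ref{sopt} forces $\mc H_0=\{0\}$ and hence $\dom T_\mx=\dom T_0=\{0\}$. The content therefore reduces to a single constructive statement: for every $f\in\mc L^2(w)$, exhibit a balanced $u$ with $wu=0$ satisfying $Ju'+qu=wf$.

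To construct such a $u$, fix a non-trivial $u_0\in\mc L_0$; by Hypothesis \ref{H:4.1} and the jump analysis of $Ju_0'+qu_0=0$, the value $u_0(x)$ is never zero. Choose $x_0$ where the antiderivatives $Q$ and $W$ are continuous, let $U$ be the fundamental matrix of $Ju'+qu=0$ with $U(x_0)=\id$, and set $M=u_0(x_0)$, so $u_0=UM$. Lemma \ref{varconst2} at $\la=0$ produces the balanced solution of the initial value problem $Ju'+qu=wf$, $u(x_0)=0$, as $u(x)=U(x)J^{-1}\int_{x_0}^x U^*wf$. I will show that the integrand lies pointwise in $\bb C M$, so that $u=cu_0$ for some scalar $c\in\BVl^\#$.

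The direction argument is geometric. Since $u_0$ lies in $\ker w$ pointwise (and in $\ker\Delta_w$ at atoms) and never vanishes, the non-negative Hermitian matrix (of densities, respectively of jumps) has rank at most $1$, so $\ran w(x)=(\ker w(x))^\perp\subset(\bb C u_0(x))^\perp$. Because $J$ is real and skew, one may choose $u_0$ real, whence $u_0^\top Ju_0=0$ and the one-dimensional orthogonal complement is $\bb C Ju_0(x)=\bb C JU(x)M$. Consequently $wf=\phi\cdot JUM$ for a scalar distribution $\phi$ of order zero, and the Wronskian identity $U^*JU=J$ from Lemma \ref{T4.1.4a} at $\la=0$ converts this into $U^*wf=\phi JM$. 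Hence $J^{-1}\int_{x_0}^x U^*wf=\bigl(\int_{x_0}^x\phi\bigr)M=c(x)M$, so $u=cu_0$ as claimed.

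The main obstacle, and the one place where the hypothesis $\ker\Delta_w(x)\subset\ker(\Delta_w(x)J^{-1}\Delta_q(x))$ is indispensable, is verifying $wu=0$ at the atoms. The continuous part is immediate since $c\cdot wu_0=0$. At an atom $x$, the balanced product rule gives $(cu_0)^\#(x)=c(x)u_0(x)+\tfrac14\Delta_c(x)\Delta_{u_0}(x)$, so the atom of $wu$ is $\Delta_w(x)(cu_0)^\#(x)=c(x)\Delta_w(x)u_0(x)+\tfrac14\Delta_c(x)\Delta_w(x)\Delta_{u_0}(x)$. The first term vanishes because $\Delta_wu_0=0$; for the second, the jump of $Ju_0'+qu_0=0$ yields $\Delta_{u_0}=-J^{-1}\Delta_qu_0$, and then $u_0(x)\in\ker\Delta_w(x)$ together with the additional hypothesis forces $\Delta_w J^{-1}\Delta_qu_0=0$, whence $\Delta_w\Delta_{u_0}=0$. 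Thus $wu=0$ as a distribution, so $([u],[f])=([0],[f])\in T_\mx$ for every $f\in\mc L^2(w)$, which gives $\mc H_\infty=L^2(w)$ and completes the proof.
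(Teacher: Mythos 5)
Your argument is correct, but it reaches the conclusion by a genuinely different route than the paper. The paper starts from an arbitrary $([u],[f])\in T_\mx$, represents the canonical representative as $v=E_\la(f-\la u)$ for non-real $\la$ using the Green's-function machinery of Section \ref{sst.1}, forces $u_0=0$ in that representation via $PJ^{-1}P=0$ together with equations \eqref{170811.1} and \eqref{170811.4}, and then shows $wv=0$; this yields $\dom T_\mx=\{0\}$ first, and $\ran T_\mx=L^2(w)$ is recovered afterwards from self-adjointness. You instead work entirely at $\la=0$ with the plain variation-of-constants formula, prove the inclusion $\{0\}\exsum L^2(w)\subset T_\mx$ (that is, $\mc H_\infty=L^2(w)$), and obtain the reverse inclusion from the abstract fact that $\dom T\subset\mc H_0$ for a self-adjoint relation (Theorem \ref{T:ALR.3.1}). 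The two proofs rest on the same two mechanisms --- realness of the coefficients gives $N_0^\perp=JN_0$ (the paper's $PJ^{-1}P=0$), so the inhomogeneity $J^{-1}U^*wf$ points along $N_0$ and the solution stays in $\bb C u_0$ modulo jump corrections; and the hypothesis $\ker\Delta_w\subset\ker\Delta_wJ^{-1}\Delta_q$ annihilates the atomic part of $w$ applied to that solution --- but your version avoids the resolvent apparatus and isolates exactly where $n_\pm=0$ enters: only in the final step (indeed Example II of the paper shows that $\mc H_\infty=L^2(w)$ can hold with $n_\pm=1$, in which case $\dom T_\mx$ is nontrivial). One small repair: Lemma \ref{T4.1.4a} gives $U^{\pm*}JU^\pm=J$ for the one-sided versions, not $U^*JU=J$ for the balanced $U$ at atoms of $q$; the fact you actually need --- that $U(x)^*\Delta_w(x)f(x)$ lies in $\bb CJM=N_0^\perp$ --- follows instead from $(\id-P)U(x)^*\Delta_w(x)=0$, which is the observation underlying Lemma \ref{L:5.1}.
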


\begin{proof}
We begin by showing that $PJ^{-1}P=0$.
If we denote the normalized vector which spans the range of $P$ by $m_0$ we get that $PJ^{-1}P=(m_0^*J^{-1}m_0)P$.
In our case $m_0=u(c)$ for an appropriately normalized non-trivial element $u$ of $\mc L_0$.
Since $\ov u$ is also in $\mc L_0$ we get that $m_0^*$ is a multiple of $m_0^\top$.
Hence $m_0^*J^{-1}m_0=0$.

Now assume that we have the limit-point case at $b$, the other case being treated similarly.
Suppose $(u,f)\in\mc T_\mx$ and $\la\in\bb C\setminus(\Lambda\cup\bb R)$.
Let $E$ be the evaluation operator from Definition \ref{D:5.2} with anchor point $x_0$.
Then $v=E_\la (f-\la u)$ is in $[u]$.
Since $P_+(\la)=\id-P$ and $U(\cdot,\ol)^*w=PU(\cdot,\ol)^*w$ equation \eqref{170811.1} shows that $Pu_0=-PJ^{-1}P\int_{(x_0,b)} U(\cdot,\ol)^*wf$ so that $Pu_0=0$.
However, equation \eqref{170811.4} states that $Pu_0=u_0$ implying $u_0=0$ in equation \eqref{170812.1}.

Let $n_0=J^{-1}m_0$ and note that this spans $N_0$.
Using Lemma \ref{L:5.1} we see that both $\int_{(x_0,x)}U(\cdot,\ol)^*wf$ and $-\int_{(x,x_0)}U(\cdot,\ol)^*wf$ are scalar multiples of $m_0$.
Similarly $U(x,\ol)^*\Delta_w(x)f(x)$ is a scalar multiple of $m_0$.
Hence we get,
$$v(x)=(\alpha(x)+\beta(x)(\id\pm J^{-1}(\la \Delta_w(x)-\Delta_q(x)))) U(x,\la)n_0$$
for appropriate scalar-valued functions $\alpha$ and $\beta$.
It follows that $wv$ is a discrete measure.
In fact, $(wv)(\{x\})=-\beta(x)\Delta_w(x)J^{-1}\Delta_q(x) U(x,\la)n_0$.
Since $U(x,\la) n_0$ is in $\ker\Delta_w(x)$, it is, by assumption, also in $\ker\Delta_w(x)J^{-1}\Delta_q(x)$.
It follows that $wv$ is the zero measure and hence that $v\in[0]$, \ie, $[u]=[0]$.
We have now shown that $\dom T_\mx$ is trivial.
That $\ran T_\mx=L^2(w)$ follows from the self-adjointness of $T_\mx$.
\end{proof}

We note that the condition $\ker\Delta_w(x)\subset \ker\Delta_w(x)J^{-1}\Delta_q(x)$ is satisfied if, for instance, at most one of $Q$ and $W$ jumps at any given point $x$.

\subsection{Definiteness condition holds}
This case closely resembles that of the classical Sturm-Liouville equation and it is possible to follow the well-known blueprint for that situation.
Specifically we emulate Chapters 3 -- 6 of Eckhardt et al. \cite{MR3046408}.

First note that, if $(u,f)$ and $(v,g)$ are in $\mc T_\mx$ such that $f$ and $g$ are equivalent in $L^2(w)$ then $u-v$ satisfies the equation $J(u-v)'+q(u-v)=0$.
Thus $u-v\in\mc L_0=\{0\}$, if $u$ and $v$ are also equivalent.
This means that, given an element $([u],[f])\in T_\mx$, we may find a unique $u\in\dom\mc T_\mx$ such that $Ju'+qu=wf$.

Another  major consequence of the definiteness condition is that one may modify elements of $T_\mx$ on part of the interval $(a,b)$ in certain ways without leaving the set.
More specifically, we have the following lemma.

\begin{lem}\label{L:8.4}
Suppose $Ju'+qu=wf$ and $u\in L^2(w\chi_{(a,c)})$ for some (and hence any) $c\in(a,b)$.
Then there is a $(v,g)\in T_\mx$ such that $v=u$ near $a$ and $v=0$ near $b$.
A similar statement holds with the roles of $a$ and $b$ reversed.
\end{lem}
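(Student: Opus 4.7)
My plan is to splice $v$ together from three pieces: $v=u$ on a neighborhood $(a,c_1)$ of $a$, $v=0$ on a neighborhood $(c_2,b)$ of $b$, and a bridging balanced function on the intermediate interval $[c_1,c_2]$ obtained from an inhomogeneous solution. The bridge is arranged so that $v$ is continuous at the joints $c_1,c_2$ and satisfies $Jv'+qv=wg$ for a single $g\in\mc L^2(w)$.

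The first and main step is to pick $c_1<c_2$ in $(a,b)$, each a continuity point of both $Q$ and $W$, with the property that no nontrivial solution $y$ of $Jy'+qy=0$ satisfies $wy=0$ throughout $(c_1,c_2)$. This ``local definiteness'' on $(c_1,c_2)$ is the only nontrivial input and is the main obstacle of the proof. Its existence is secured as follows: with $U$ a fixed fundamental matrix for $Jy'+qy=0$ on $(a,b)$, the subspaces $V(c_1,c_2)=\{\alpha\in\bb C^2:wU\alpha=0\text{ on }(c_1,c_2)\}$ decrease as the interval widens and have intersection $\{\alpha\in\bb C^2:U\alpha\in\mc L_0\}=\{0\}$ by the definiteness assumption $\mc L_0=\{0\}$; any descending chain of subspaces of $\bb C^2$ is eventually constant, and the countable set of discontinuities of $Q,W$ can be avoided by a small adjustment of the endpoints.

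Next, let $\tilde U$ denote the fundamental matrix of $Jy'+qy=0$ on $(c_1,c_2)$ normalized by $\tilde U(c_1)=\id$, and set $u_h=\tilde U\,u(c_1)$. Applying Lemma~\ref{L:5.1} with $\la=0$ to the restricted problem on $(c_1,c_2)$ (where local definiteness forces $N_0^\perp=\bb C^2$) produces a compactly supported $g_0\in\mc L^2(w\chi_{(c_1,c_2)})$ with $\int_{[c_1,c_2)}\tilde U^*wg_0=-Ju(c_1)$. Let $v_p$ be the balanced solution on $(c_1,c_2)$ of $Jv_p'+qv_p=wg_0$ with $v_p(c_1)=0$; by the variation-of-constants formula (Lemma~\ref{varconst2}) and the choice of $g_0$, one gets $v_p(c_2)=-\tilde U(c_2)u(c_1)=-u_h(c_2)$.

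Finally, splice: set $v=u$ on $(a,c_1]$, $v=u_h+v_p$ on $(c_1,c_2]$, $v=0$ on $(c_2,b)$, and analogously $g=f,\,g_0,\,0$ on the three pieces. Continuity of $Q$ and $W$ at $c_1,c_2$ forces all one-sided limits of $u,u_h,v_p$ to agree with their balanced values there, so the definitions match at the joints: $u(c_1)=u_h(c_1)+v_p(c_1)$ and $u_h(c_2)+v_p(c_2)=0$. Thus $v$ is a balanced element of $\BVl^\#((a,b))^n$ with no jump at $c_1$ or $c_2$, and since $\Delta_q=\Delta_w=0$ at these points, no spurious atomic contribution to $Jv'+qv-wg$ appears; combined with the three open-interval verifications, $Jv'+qv=wg$ holds distributionally on all of $(a,b)$. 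Membership $[v],[g]\in L^2(w)$ is routine: near $a$, $v=u$ is in $\mc L^2(w)$ by hypothesis; on the compact set $[c_1,c_2]$, $v$ and $g_0$ are bounded (resp. $\mc L^2$); and near $b$ both vanish. Hence $(v,g)\in T_\mx$ with $v=u$ near $a$ and $v=0$ near $b$. The reversed statement follows from the symmetric construction on the mirror side.
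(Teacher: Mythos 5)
Your construction is correct and follows essentially the same route as the paper's proof: pick a subinterval with continuity endpoints on which the definiteness condition holds locally, use Lemma~\ref{L:5.1} to produce a compactly supported $g$ with $\int U^*wg=-Ju(c_1)$ so that the variation-of-constants solution steers $u(c_1)$ to $0$, and splice. Your descending-chain justification of the local-definiteness interval and the explicit check at the joints merely fill in details the paper leaves implicit.
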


\begin{proof}
In view of the definiteness condition there is an interval $[c,d]\subset (a,b)$ such that $wU(\cdot,0)(\alpha,\beta)^\top\neq0$ on $(c,d)$ for all nontrivial $(\alpha,\beta)^\top\in\bb C^2$.
In fact we may choose both $c$ and $d$ to be points of continuity for $Q$ and $W$.
Let $U$ be a fundamental matrix for $Ju'+qu=0$ so that $U(c)$ is the identity matrix.
According to Lemma \ref{L:5.1} we can find a $g:(c,d)\to\bb C$ such that $g\in L^2(w\chi_{(c,d)})$ and $u(c)+J^{-1}\int_{(c,d)} U^*wg=0$.
We then define $v$ on $[c,d]$ by $v^-(x)=U^-(x)(u(c)+J^{-1}\int_{(c,x)} U^*wg$ as well as $v=u$ on $(a,c]$ and $v=0$ on $[d,b)$.
We also extend $g$ to $(a,c)$ and $(d,b)$ by $f$ and $0$, respectively.
Then $(v,g)\in T_\mx$.
\end{proof}

The first consequence of this is that we may characterize $\ov{T_\mn}$.

\begin{thm}\label{T:8.5}
$$\ov{T_\mn}=\{(u,f)\in T_\mx: \forall (v,g)\in T_\mx: (v^*Ju)^-(b)=(v^*Ju)^+(a)=0\}.$$
\end{thm}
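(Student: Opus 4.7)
By Theorem~\ref{T4.1.6} we have $T_\mn^*=T_\mx$; taking adjoints once more gives $T_\mx^*=T_\mn^{**}=\overline{T_\mn}$. Unpacking this identification through Lagrange's identity~\eqref{Lagrange}, the membership $(u,f)\in\overline{T_\mn}$ is equivalent to $(u,f)\in T_\mx$ together with
$$(v^*Ju)^-(b)-(v^*Ju)^+(a)=0$$
for every $(v,g)\in T_\mx$. The strategy of the proof is to upgrade the vanishing of this single difference to the separate vanishing of the two endpoint limits, using Lemma~\ref{L:8.4} as the key extra ingredient.

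The $\supseteq$ inclusion is immediate: if both boundary terms vanish for every $(v,g)\in T_\mx$, then so does their difference, and the reduction above places $(u,f)$ in $\overline{T_\mn}$.

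For the $\subseteq$ inclusion, I would fix $(u,f)\in\overline{T_\mn}$ and an arbitrary $(v,g)\in T_\mx$. Since $v\in L^2(w)$, in particular $v\in L^2(w\chi_{(a,c)})$ for every $c\in(a,b)$, so Lemma~\ref{L:8.4} produces a pair $(v_a,g_a)\in T_\mx$ with $v_a=v$ near $a$ and $v_a=0$ near $b$. Applying the Lagrange reduction to $(u,f)$ and $(v_a,g_a)$, the boundary term at $b$ vanishes trivially, forcing $(v_a^*Ju)^+(a)=0$; since $v_a$ agrees with $v$ near $a$, this is exactly $(v^*Ju)^+(a)=0$. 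The symmetric statement of Lemma~\ref{L:8.4} (with the roles of $a$ and $b$ interchanged) yields in the same way $(v^*Ju)^-(b)=0$.

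The main point where care is needed is that the boundary quantities $(v^*Ju)^{\pm}$ depend a priori on the choice of representatives. Under the definiteness condition imposed in this subsection, however, every class in $\dom T_\mx$ has a unique representative satisfying $Ju'+qu=wf$, so the boundary expressions are unambiguous and no additional bookkeeping is needed.
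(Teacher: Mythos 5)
Your proposal is correct and follows essentially the same route as the paper: identify $\ov{T_\mn}$ with $T_\mx^*$ via Theorem~\ref{T4.1.6}, translate the adjoint condition into the vanishing of the Lagrange difference, and then use Lemma~\ref{L:8.4} to cut off $v$ near one endpoint so as to isolate the boundary term at the other. The only cosmetic difference is the order in which the two endpoints are treated, and your remark about unique representatives under the definiteness condition is exactly the justification the paper supplies at the start of the subsection.
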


\begin{proof}
If $(u,f)\in\ov{T_\mn}=T_\mx^*$, it is clear from Lagrange's identity \eqref{Lagrange} that $(v^*Ju)^-(b)-(v^*Ju)^+(a)=0$ whenever $(v,g)\in T_\mx$.
Now find, using Lemma \ref{L:8.4}, a pair $(v_0,g_0)\in T_\mx$ such that $v_0=0$ near $a$ and $v_0=v$ near $b$.
Then $(v^*Ju)^-(b)=(v_0^*Ju)^-(b)=(v_0^*Ju)^-(b)-(v_0^*Ju)^+(a)=0$.
For the converse use Lagrange's identity again.
\end{proof}

Occasionally we will use below the abbreviation $V$ for $D_i\oplus D_{-i}$.

\begin{lem}\label{L:8.6}
If we have the limit-point case at $b$, then $(r^*Ju)^-(b)=0$ for all $(u,f),(r,h)\in T_\mx$.
The analogous statement is true when $a$ is limit-point.
\end{lem}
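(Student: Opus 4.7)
The plan is to localize the given pairs near $b$, reduce the boundary identity to a symmetry statement about the closed minimal relation on a half-interval, and then close the argument with a dimension count that uses the limit-point hypothesis. I treat the endpoint $b$; the argument for $a$ is entirely symmetric.

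First, I would apply Lemma \ref{L:8.4} (in the reversed form, with the roles of $a$ and $b$ interchanged) to both pairs $(u,f)$ and $(r,h)$. This produces $(u_0,f_0), (r_0,h_0)\in \mc T_\mx$ that agree with the originals near $b$ but vanish on some left-neighborhood $(a,c]$ of $a$. After possibly shrinking $c$, I arrange that $c$ is a continuity point of $Q$ and $W$, which forces $u_0(c) = r_0(c) = 0$. Since the limit $(r^* J u)^-(b)$ depends only on behavior near $b$ and the boundary term $(r_0^* J u_0)^+(a)$ vanishes, Lagrange's identity \eqref{Lagrange} reduces the task to proving $\langle r_0, f_0 \rangle = \langle h_0, u_0 \rangle$.

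Next, I would pass to the half-interval $(c, b)$, with associated minimal and maximal relations $\tilde T_\mn$ and $\tilde T_\mx$. The restrictions of $(u_0, f_0)$ and $(r_0, h_0)$ lie in $\tilde T_\mx$, and since both $u_0, r_0$ vanish on $(a, c]$, the $L^2(w)$-inner products over $(c, b)$ agree with those over $(a, b)$. It therefore suffices to prove that both restrictions belong to $\ov{\tilde T_\mn}$; the symmetry of this closed symmetric relation then produces the desired equality.

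The decisive step — and the main obstacle — is to show that on $(c, b)$ every $(v, g) \in \tilde T_\mx$ with $v(c) = 0$ lies automatically in $\ov{\tilde T_\mn}$. Here the limit-point hypothesis enters through a dimension count. With $c$ regular, $b$ limit-point, and definiteness in force, the argument of Lemma \ref{L5.1} adapted to $(c, b)$ yields deficiency indices $n_+ = n_- = 1$ (the degenerate case $n_\pm = 0$, in which $\tilde T_\mx = \ov{\tilde T_\mn}$, is trivial and I suppress it). Hence $\dim(\tilde T_\mx / \ov{\tilde T_\mn}) = 2$. I would then consider the evaluation map $\Phi: \tilde T_\mx \to \bb C^2$, $\Phi(v, g) = v(c)$. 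Surjectivity of $\Phi$ follows by adapting the construction in the proof of Lemma \ref{L:8.4} to $(c, b)$: for any $\alpha \in \bb C^2$ one builds $(v, g) \in \tilde T_\mx$ with $v(c) = \alpha$ and compact support in $[c, b)$. The analogue of Theorem \ref{T:8.5} on $(c, b)$ (with $c$ playing the role of the regular endpoint, so that $(v^*Ju)^+(c) = v(c)^* J u(c)$), combined with the surjectivity of $\Phi$ and the invertibility of $J$, gives $\ov{\tilde T_\mn} \subseteq \ker \Phi$. The induced surjection $\bar\Phi: \tilde T_\mx/\ov{\tilde T_\mn} \to \bb C^2$ between two-dimensional spaces must be an isomorphism, so $\ker \Phi = \ov{\tilde T_\mn}$. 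Applying this identification to the restrictions of $(u_0, f_0)$ and $(r_0, h_0)$, each of which satisfies $v(c) = 0$, places them in $\ov{\tilde T_\mn}$ and closes the argument.
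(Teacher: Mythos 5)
Your proposal is correct and follows essentially the same route as the paper's proof: both arguments use Lemma~\ref{L:8.4} to localize near $b$, use the limit-point hypothesis together with Lemma~\ref{L:7.2} to see that $\ov{T_\mn}$ has codimension two in $T_\mx$ and that this codimension is detected entirely by boundary data at the regular left endpoint, and then finish with Lagrange's identity and the relation $\ov{T_\mn}=T_\mx^*$. The paper packages the key step as $T_\mx=\ov{T_\mn}\dot+\span\{(\tilde v_1,f_1),(\tilde v_2,f_2)\}$ with the $\tilde v_k$ vanishing near $b$, while you package it as $\ov{\tilde T_\mn}=\ker\Phi$ on the subinterval $(c,b)$; these are equivalent formulations of the same dimension count, and your explicit carve-out of the degenerate case $n_\pm=0$ happens to cover exactly the situation (failure of the definiteness condition on $(c,b)$, which is not automatically inherited from $(a,b)$) in which the evaluation map $\Phi$ would otherwise fail to be well defined.
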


\begin{proof}
Suppose first that $a$ is a regular point, $(u,f)\in T_\mx$, and $c\in(a,b)$.
According to Lemma \ref{L:7.2} there is one solution of $Ju'+qu=iwu$ which is in $L^2(w)$ but there cannot be two since $b$ is limit-point.
Hence we have $n_\pm=1$ and $\dim V=2$.
Now let $v_1$ and $v_2$ be solutions of $Ju'+qu=0$ satisfying initial conditions $v_1(a)=(1,0)^\top$ and $v_2(a)=(0,1)^\top$.
By Lemma \ref{L:8.4}, there are elements $(\tilde v_k,f_k)\in T_\mx$ such that $\tilde v_k$ agrees with $v_k$ near $a$ but is zero near $b$.
We then have $(\tilde v_2^*J \tilde v_1)(a)\neq0$ proving, in view of Theorem \ref{T:8.5}, that neither $(\tilde v_1,f_1)$ nor $(\tilde v_2,f_2)$ is in $\ov{T_\mn}$.
Thus $T_\mx=\ov{T_\mn}\dot+\span\{(\tilde v_1,f_1),(\tilde v_2,f_2)\}$.
It follows that any element of $T_\mx$ agrees with some element of $\ov{T_\mn}$ near $b$.
Using Lemma \ref{L:8.4} again, we may find, for any $(r,h)\in T_\mx$, an element $(\tilde r,\tilde h)\in T_\mx$ such that $\tilde r$ equals $r$ near $b$ and $0$ near $a$.
Lagrange's identity \eqref{Lagrange} gives now $(r^*Ju)^-(b)=(\tilde r^*J\tilde u)^-(b)-(\tilde r^*J\tilde u)^+(a)=\<\tilde r,\tilde f\>-\<\tilde h,\tilde u\>=0$.

For the general case consider the problem posed on the interval $(a',b)$ for some $a'\in(a,b)$.
\end{proof}

These considerations give now that we have the following trichotomy just as in the classical case.
\begin{thm}\label{T:8.7}
The following statements hold true.
  \begin{enumerate}
    \item $n_+=n_-=0$ if and only if we have the limit-point case at both $a$ and $b$.
    In this situation $T_\mx$ is self-adjoint.
    \item $n_+=n_-=1$ if and only if we have the limit-point case at one of $a$ and $b$ and the limit-circle case at the other.
    Self-adjoint restrictions are given by posing a boundary condition at the limit-circle end.
    \item $n_+=n_-=2$ if and only if we have the limit-circle case at both $a$ and $b$.
  \end{enumerate}
\end{thm}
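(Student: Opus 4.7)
The plan is to express $n_\pm$ as a concrete dimension count and then combine the limit-point/limit-circle dichotomy with the boundary-term vanishing from Lemma~\ref{L:8.6}. Under the definiteness condition the assignment $u\mapsto [u]$ sending a solution of $Ju'+qu=iwu$ to its class in $L^2(w)$ is injective (if $u-v\in\mc L_0=\{0\}$ then $u=v$), so
\[
n_\pm=\dim\{u:Ju'+qu=\pm iwu,\ u\in L^2(w)\}.
\]
By Corollary~\ref{C4.1.4} the full solution space has dimension $2$, and $u\in L^2(w)$ iff $u$ lies in both $L^2(w\chi_{(a,c)})$ and $L^2(w\chi_{(c,b)})$ for any $c\in(a,b)$, so the count reduces to dimension arithmetic on the two Weyl spaces.

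The main work, and the step I expect to be the true obstacle, is the forward implication of~(1): LP at both endpoints forces $n_\pm=0$. I would suppose for contradiction that $u$ is a nontrivial $L^2$ solution of $Ju'+qu=iwu$, so $(u,iu)\in\mc T_\mx$. Applying Lemma~\ref{L:8.6} once at $b$ and once via its mirror at $a$, each with $r=u$, gives $(u^*Ju)^-(b)=(u^*Ju)^+(a)=0$. Lagrange's identity~\eqref{Lagrange} then reads $0=\<u,iu\>-\<iu,u\>=2i\|u\|_w^2$, so $[u]=[0]$; distributionally $wu=0$, the equation collapses to $Ju'+qu=0$, and $u\in\mc L_0=\{0\}$. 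This is the one place where the definiteness hypothesis is used essentially rather than cosmetically.

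The remaining counts are immediate. If both ends are LC then every solution is $L^2$ near each end and hence globally, giving $n_\pm=2$; conversely $n_\pm=2$ forces every solution to be $L^2$ near each end, which is LC at both. In the mixed case --- say LP at $a$, LC at $b$ --- Lemma~\ref{L:7.2} supplies one $L^2$ solution near $a$ (no more, by LP), and LC at $b$ makes it automatically $L^2$ near $b$, giving $n_\pm=1$; the symmetric case is identical. Since the four LP/LC pairings produce $n_\pm\in\{0,1,1,2\}$, each ``iff'' in the trichotomy follows by exclusion.

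For the structural claims: in~(1), $n_+=n_-=0$ means by the abstract theory of Appendix~\ref{ALR} that $T_\mn$ is essentially self-adjoint, so $T_\mx=T_\mn^*=\ov{T_\mn}$ is self-adjoint. In~(2), Theorem~\ref{T:5.1} produces the self-adjoint restrictions as those satisfying a single boundary condition $(g_1^*Ju)^-(b)-(g_1^*Ju)^+(a)=0$; Lemma~\ref{L:8.6} kills the term at the limit-point endpoint for every element of $\mc T_\mx$, so the condition reduces to a constraint at the limit-circle endpoint alone.
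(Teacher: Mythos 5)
Your proof is correct and follows essentially the same route as the paper: the dimension count via the injectivity of $u\mapsto[u]$ under the definiteness condition, Lemma~\ref{L:7.2} for the lower bounds, and Lemma~\ref{L:8.6} combined with Lagrange's identity~\eqref{Lagrange} to rule out square-integrable eigensolutions in the doubly limit-point case. The only (immaterial) difference is organizational: the paper establishes the symmetry of $T_\mx$ for all pairs and deduces $n_\pm=0$ from self-adjointness, whereas you apply the same computation to the single pair $(u,iu)$ and recover self-adjointness afterwards from $n_\pm=0$.
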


\begin{proof}
Suppose we have the limit-point case at both $a$ and $b$.
For any $(u,f),(v,g)\in T_\mx$ Lemma \ref{L:8.6} and Lagrange's identity \eqref{Lagrange} give $\<g,u\>=\<v,f\>$.
Hence $T_\mx$ is self-adjoint and the deficiency indices of $T_\mn$ are zero.
Conversely, if at least one of $a$ and $b$ is limit-circle Lemma \ref{L:7.2} gives that $n_\pm>0$.
This proves (1).

Now suppose that we have the limit-circle case at both $a$ and $b$.
Then every solution of $Ju'+qu=\pm iw u$  is in $L^2(w)$, \ie, $n_\pm=2$.
Conversely, if at least one $a$ and $b$ is limit-point we must have $n_\pm<2$.
This proves (3) and thus also the first statement of (2).

For the second statement of (2) assume that the boundary condition is given by $(v,g)\in V$ (cf. Theorem \ref{T:5.1}) and assume that $a$ is limit-circle and $b$ is limit-point (the other case being treated similarly).
Lemma \ref{L:8.6} gives $(g^*Ju)^-(b)=0$ for any $(u,f)\in T_\mx$ so that the boundary condition reads $(g^*Ju)^+(a)=0$.
\end{proof}

Lemma \ref{L:8.6} says that boundary conditions have no effect at endpoints which are limit-point, they only affect limit-circle endpoints.
We saw in Section \ref{sre} that we may express boundary conditions through boundary values at regular endpoints.
Something similar is true under the present circumstances for limit-circle endpoints as we will see next.
For this we assume that we have the limit-circle case at $a$ but the same arguments work, of course, when $b$ is limit-circle.
The key to this is the identity
$$\sum_{j=1}^3 (-1)^j (A^*JB_j)(B_k^*JB_\ell)=0$$
which holds for arbitrary vectors $A,B_1,B_2,B_3\in\bb C^2$ when $j,k,\ell$ are pairwise different and $k<\ell$.
It is straightforward to check the validity of this identity but we note that it is one of the Gra\ss mann-Pl\"ucker relations\footnote{It is customary to express these relations in terms of determinants and we are making use of the fact that $A^*JB=-\beta\det(\ov{A},B)$.}.

We will choose $A=g(x)$, $B_1=v_1(x)$, $B_2=v_2(x)$, and $B_3=u(x)$, where $g,u\in\dom T_\mx$ and $v_1,v_2$ are real-valued, linearly independent solutions of $Ju'+qu=0$.
Then we obtain
\begin{multline}\label{180114.1}
(g(x)^*Ju(x))(v_1(x)^*Jv_2(x))\\ =(g(x)^*Jv_2(x))(v_1(x)^*Ju(x))-(g(x)^*Jv_1(x))(v_2(x)^*Ju(x)).
\end{multline}
Each of the terms appearing here has a limit as $x$ tends to $a$ thanks to Lagrange's identity \eqref{Lagrange} and Lemma \ref{L:8.4}.
Moreover, $v_1^+(x)^*Jv_2^+(x)$ is different from zero and constant in view of Lemma~\ref{T4.1.4a}.
We may choose this constant to be $-1/\beta$.
Upon defining
$$\vec u(a)=((v_1^*Ju)^+(a),(v_2^*Ju)^+(a))^\top\text{ and }\vec g(a)=((v_1^*Jg)^+(a),(v_2^*Jg)^+(a))^\top$$
equation \eqref{180114.1} is equivalent to
\begin{equation}\label{180114.2}
(g^*Ju)^+(a)=\vec g(a)^*J\vec u(a).
\end{equation}
Of course similar considerations work at $b$ if it is limit-circle.
We emphasize that we may choose different solutions $v_1$ and $v_2$ near either of the endpoints should they both be limit-circle.
If $a$ is regular we may choose $v_1$ and $v_2$ so that $v_1(a)=(0,1/\beta)^\top$ and $v_2(a)=(-1/\beta,0)^\top$ to get $\vec u_1(a)=u_1(a)$ and $\vec u_2(a)=u_2(a)$.

Now we return to case (2) of Theorem \ref{T:8.7}.
Assume $a$ is limit-circle and $b$ is limit-point (the other case being similar).
According to Theorem \ref{T:5.1} and Lemma \ref{L:8.6} any self-adjoint restriction of $T_\mx$ is given by
$$T=\{(u,f)\in T_\mx: (g^*Ju)^+(a)=0\}=\{(u,f)\in T_\mx: \vec g(a)^*J\vec u(a)=0\}$$
where $(v,g)\in V$ satisfies $\<(v,g),(g,-v)\>=2i\Im(\<v,g\>)=0$.
Here we may choose the components of $\vec g(a)^*J$ as $(\cos(\alpha),-\sin(\alpha))$ for some $\alpha\in[0,\pi)$ and obtain
\begin{equation}\label{180216.1}
T=\{(u,f)\in T_\mx: \cos(\alpha) \vec u_1(a)-\sin(\alpha)\vec u_2(a)=0\}.
\end{equation}
In other words, the self-adjoint restrictions of $T_\mx$ are in one-to-one correspondence to points in $[0,\pi)$.

Next we turn to case (3) of Theorem \ref{T:8.7} where both $a$ and $b$ are limit-circle endpoints.
Now Theorem \ref{T:5.1} states that the self-adjoint restriction of $T_\mx$ are determined by two elements $(v_1,g_1),(v_2,g_2)\in V$ satisfying
\begin{equation}\label{180114.3}
\<(v_k,g_k),(g_j,-v_j)\>=0\text{ for $j,k=1,2$}.
\end{equation}
In fact, the boundary condition is $A(u,f)=0$ where $A_j=\<(v_j,g_j),\cdot\>$, $j=1,2$.
Using Lagrange's identity \eqref{Lagrange} we get $A(u,f)=\tilde A \sm{\vec u(a)\\ \vec u(b)}$, where
$$\tilde A=\begin{pmatrix}\vec g_1(a)^*&\vec g_1(b)^*\\ \vec g_2(a)^*&\vec g_2(b)^*\end{pmatrix}\bb J^*$$
with, as in Section \ref{sre}, $\bb J=\sm{J&0\\0&-J}$.
Condition \eqref{180114.3} becomes $\tilde A^*\bb J^{-1}\tilde A=0$.
Note that $\tilde A$ is a matrix in $\bb C^{2\times 4}$ which has (full) rank 2.
Hence we can now, similar to Theorem \ref{T5.3}, state that the self-adjoint restriction $T$ of $T_\mx$ are precisely given by
$$T=\{(u,f)\in T_\mx: \tilde A \sm{\vec u(a)\\ \vec u(b)}=0\}$$
where $\tilde A\in\bb C^{2\times 4}$ has rank $2$ and satisfies $\tilde A\bb J^{-1}\tilde A^*=0$.

To investigate the various possibilities we now write $\tilde A$ as two blocks of $2\times 2$ matrices, \ie, $\tilde A=(A_1,A_2)$.
Thus the condition $\tilde A\bb J^{-1}\tilde A^*=0$ becomes $A_1J^{-1}A_1^*=A_2J^{-1}A_2^*$.
If $A_1$ is invertible we may as well assume that it is the identity.
It follows that $A_2$ must be invertible and hence that the boundary conditions must be coupled (neither can involve $\vec u(a)$ or $\vec u(b)$ alone).
It follows similarly that $A_1$ is invertible if $A_2$ is, leading again to coupled boundary conditions.

It remains to consider the case where both $A_1$ and $A_2$ have rank $1$.
In this case we may assume that one of the rows of $A_1$ is equal to zero.
The corresponding row of $A_2$ must then be different from zero.
In fact, we may assume without loss of generality that $\tilde A=\sm{A_{1,1}&A_{1,2}&0&0\\ 0&0&A_{2,1}&A_{2,2}}$ so that we have separated boundary conditions. Thus, mixed boundary conditions cannot occur.

We finish the chapter by investigating the relationship between the matrix $M$ introduced in Section \ref{sst.2} and the Titchmarsh-Weyl function for the case when $a$ is regular and $b$ is limit-point.
We also set $\beta=1$ and assume for the remainder of the chapter the validity of Hypothesis \ref{H:6.1}.
Let $x_0=a$ and let $U(\cdot,\la)$ be the fundamental matrix for $Ju'+qu=\la wu$ satisfying $U(a,\la)=\id$.
We set $\theta(\cdot,\la)=U(\cdot,\la)(\cos\alpha,-\sin\alpha)^\top$ and $\varphi(\cdot,\la)=U(\cdot,\la)(\sin\alpha,\cos\alpha)^\top$ so that $\varphi(\cdot,\la)$ is a solution satisfying the boundary conditions in \eqref{180216.1} at $a$.
The Titchmarsh-Weyl function $m$ is now defined by the requirement that $\theta(\cdot,\la)+m(\la)\varphi(\cdot,\la)$ is in $L^2(w)$.
The function $\theta(\cdot,\la)+m(\la)\varphi(\cdot,\la)$ itself is called the Titchmarsh-Weyl solution of $Ju'+qu=\la wu$.

If $\la$ is in $\rho(T)\setminus\Lambda$ and $f$ supported in $[c,d]\subset(a,b)$ we have that
$$(E_\la f)(x)=\begin{cases}U(x,\la)(M(\la)-\frac12 J^{-1}) \int_{[c,d]} U(\cdot,\ol)^*wf&\text{if $x<c$}\\
                            U(x,\la)(M(\la)+\frac12 J^{-1}) \int_{[c,d]} U(\cdot,\ol)^*wf&\text{if $x>d$}\end{cases}$$
is in $L^2(w)$.
Since $\int U(\cdot,\ol)^*wf$ spans $\bb C^2$ as $f$ varies, when $[c,d]$ is sufficiently large, it follows that any linear combination of the columns of $U(\cdot,\la)(M(\la)+\frac12 J^{-1})$ is in $L^2(w)$.
Also, it is a linear combination of $\theta(\cdot,\la)$ and $\varphi(\cdot,\la)$, \ie,
$$U(x,\la)(M(\la)+\frac12 J^{-1})\begin{pmatrix}\sin\alpha\\ \cos\alpha\end{pmatrix}
 =(\theta(x,\la),\phi(x,\la))\begin{pmatrix}c_1\\ c_2\end{pmatrix}$$
for suitable $c_1$ and $c_2$.
Solving for $(c_1,c_2)^\top$ gives
$$\begin{pmatrix}c_1\\ c_2\end{pmatrix}
 =\begin{pmatrix}\cos\alpha&-\sin\alpha\\ \sin\alpha&\cos\alpha\end{pmatrix}
 (M(\la)+\frac12 J^{-1})\begin{pmatrix}\sin\alpha\\ \cos\alpha\end{pmatrix}.$$

Since $E_\la f$ satisfies the boundary condition and $U(a,\la)=\id$ we have
$$(\cos\alpha,-\sin\alpha)(M(\la)-\frac12 J^{-1})\begin{pmatrix}\sin\alpha\\ \cos\alpha\end{pmatrix}=0$$
so that $c_1=1$, $c_2=m$, and
$$m(\la)=(\sin\alpha,\cos\alpha)M(\la)\begin{pmatrix}\sin\alpha\\ \cos\alpha\end{pmatrix}.$$
Note that $m$ is a Herglotz-Nevanlinna function.

\section{Examples}\label{sbsp}
We will now discuss two examples examining separately the effects of having $\Lambda$ non-empty and a missing definiteness condition.

\begin{comment}
$(a,b)=(0,1)$, $n=2$ and $w=\sm{1&0\\ 0&0}$
\begin{enumerate}
  \item $J=\sm{0&1\\ -1&0}$ and $q=\sm{0&0\\ 0&-1}$. In this case $T_\mx$ is an operator and the definiteness condition holds.
  \item $J=\sm{0&1\\ -1&0}$ and $q=\sm{0&0\\ 0&0}$. In this case $T_\mx$ is not an operator and the definiteness condition does not hold.
  \item $J=i \id_2$ and $q=\sm{0&0\\ 0&0}$. In this case $T_\mx$ is an operator but the definiteness condition does not hold.
  \item $J=i \id_2$ and $q=\sm{0&1\\ 1&0}$. In this case $T_\mx$ is not an operator but the definiteness condition holds.
\end{enumerate}
\end{comment}

\subsection{Example I}
Suppose $\iOmega=\bb R$, $n=1$, $J=i$, $q=0$, and $w=\delta_0$.
$\mc L^2(w)$ is the space of all complex-valued functions on $\bb R$.
To form $L^2(w)$ one identifies any two such functions if their values at $0$ agree.
As a consequence $L^2(w)$ is one-dimensional.
We have $\Lambda=\{\pm2i\}$.
Solutions of $Ju'=wf$ have to be constant on both $(-\infty,0)$ and $(0,\infty)$.
Denoting these values by $u_\ell$ and $u_r$, respectively, we have that $i(u_r-u_\ell)=f(0)$ and $u_r+u_\ell=2u(0)$.
Since this system has solutions for arbitrary values of $u(0)$ and $f(0)$ we have $T_\mx=L^2(w)\exsum L^2(w)$.
However, to have a solution of $Ju'=wf$ compactly supported requires $u_\ell=u_r=u(0)=f(0)=0$ so that $T_\mn=\{(0,0)\}$.
Note that $T_\mn$ is not densely defined and that $T_\mx$ is not an operator.
As relations, however, $T_\mn^*=T_\mx$, in agreement with the Theorem \ref{T4.1.6}.

Choosing $x_0<0$ a fundamental matrix is given by
$$U(x,\la)=
\begin{cases}1&\text{if $x<0$}\\ 2i/(2i-\la)&\text{if $x=0$}\\ (2i+\la)/(2i-\la)&\text{if $x>0$}\end{cases}$$
when $\la\neq\pm2i$.
It follows that $\mc L_0$ is trivial. Consequently, given an element $([u],[f])\in T_\mx$, we may find a unique $u\in\dom\mc T_\mx$ such that $Ju'+qu=wf$.

Note that $D_\la$ has dimension $1$ (even when $\la=\pm2i$) so that the deficiency indices are both equal to $1$.
In particular, the spaces $D_{\pm i}$ are spanned respectively by the vectors $(u_\pm,\pm iu_\pm)$, where
$u_+(x)=\big\{\begin{smallmatrix}1&\text{if $x<0$} \\ 3&\text{if $x>0$}\end{smallmatrix}$ and
$u_-(x)=\big\{\begin{smallmatrix}3&\text{if $x<0$} \\ 1&\text{if $x>0$}\end{smallmatrix}$.
Any boundary condition giving rise to a self-adjoint relation is given by a function $g(x)=i\big\{\begin{smallmatrix}\alpha-3\beta&\text{if $x<0$} \\ 3\alpha-\beta&\text{if $x>0$}\end{smallmatrix}$ where $|\alpha|=|\beta|\neq0$.
Then
$$(g^*Ju)^-(\infty)-(g^*Ju)^+(-\infty)=(3\ov\alpha-\ov\beta)(u_r-\frac{\ov\alpha-3\ov\beta}{3\ov\alpha-\ov\beta}u_\ell).$$
Note that the number $\gamma=(\ov\alpha-3\ov\beta)/(3\ov\alpha-\ov\beta)$ has absolute value $1$.
Thus, for a given $\gamma$ on the unit circle, we define now
$$T=\{(u,f)\in T_\mx: u_r=\gamma u_\ell\}$$
to obtain a self-adjoint restriction of $T_\mx$.
In fact, all self-adjoint restriction of $T_\mx$ are obtained this way.
If $\gamma=-1$, \ie, $\alpha=\beta$, then $T=\{0\}\exsum L^2(w)$ and the operator part $T_0$ of $T$ is given by $\{(0,0)\}$ since $\mc H_\infty=L^2(w)$.
Otherwise, if $\gamma\neq-1$, we have $\mc H_\infty=\{0\}$ and $T=T_0$ is the operator of multiplication by the real number $\la_0=2i(\gamma-1)/(\gamma+1)$.

To determine Green's function recall that $x_0<0$ and note that conditions \eqref{170811.1}, \eqref{170811.2}, and \eqref{170811.4} are irrelevant.
Only equation \eqref{170811.3} serves to determine $u_0$ and hence $H_\pm$.
In fact we obtain $A_+(\la)=(3\ov\alpha-\ov\beta)(2i+\la)/(2i-\la)$ and $A_-(\la)=(3\ov\alpha-\ov\beta)(2i+\la_0)/(2i-\la_0)$ so that
$$F(\la)=-4i(3\ov\alpha-\ov\beta)\frac{\la_0-\la}{(2i-\la_0)(2i-\la)}$$
and
$$H_\pm(\la)=\frac{(\la\pm2i)(\la_0\mp2i)}{4(\la_0-\la)}.$$
However, since $w$ has no support on $(-\infty, x_0]$ (implying that $B_-=\{0\}$) we have no need for $H_-$.
Now, if $\la$ is in the resolvent set of $T$, \ie, if $\la\neq\la_0$, and $f\in L^2(w)$, we obtain
$$(E_\la f)(x)=\frac{f(0)}{\la_0-\la}
\begin{cases} 1+i\la_0/2&\text{for $x<0$}\\ 1&\text{for $x=0$}\\ 1-i\la_0/2&\text{for $x>0$}.\end{cases}$$

The $M$-function becomes
$$M(\la)=\frac{4+\la\la_0}{4(\la_0-\la)}=\frac{3\la_0}{4(\la_0^2+1)}+\int \big(\frac1{t-\la}-\frac{t}{t^2+1}\big)\nu(t)$$
where $\nu=\frac14(\la_0^2+4) \delta_{\la_0}$.

The Fourier transform is given by $(\mc Ff)(t)=2i f(0)/(2i+t)$.
Of course, only its value at $t=\la_0$ is relevant.
Its adjoint (or inverse) is given by $(\mc G\hat f)(x)=\frac14 (\la_0^2+4)U(x,\la_0)\hat f(\la_0)$.

\subsection{Example II}
Let $n=2$, $a=0$, $b\in(0,\infty)$, $J=\sm{0&-1\\ 1&0}$, $q=0$, and $w=\sm{1&0\\ 0&0}$.
Then $\mc L^2(w)$ consists of pairs of complex-valued functions on $(0,b)$ whose first components are square integrable.
Any two such functions are equivalent if their first components are equal almost everywhere.
The corresponding classes form $L^2(w)$ which is infinite-dimensional.
In this case $\Lambda$ is empty.

The relation $\mc T_\mx$ is made up of elements $(u,f)$ where
$$u=\begin{pmatrix}c_1\\ c_2-\int_0^x f_1\end{pmatrix}
 \text{ and }
 f=\begin{pmatrix}f_1 \\f_2 \end{pmatrix}$$
with $c_1,c_2\in\bb C$, $f_1\in \mc L^2(0,b)$ and $f_2$ is completely arbitrary.
$(u,f)$ is in $\mc T_\mn$ if $c_1=c_2=0$, $\supp f_1$ compact and $\int_0^b f_1=0$.
Thus $T_\mx$ is equivalent to $\bb C\exsum L^2(w)$ while $\ov{T_\mn}=\{0\}\exsum\{f\in L^2(w): \int_0^b f_1=0\}$.

Fix $x_0\in[0,b]$.
Then the fundamental matrix, for which $U(x_0,\la)=\id$, is given by
$$U(x,\la)=\begin{pmatrix}1&0\\ \la (x_0-x)&1\end{pmatrix}.$$
Either column of $U(\cdot,\la)$ is a non-trivial solution of $Ju'+qu=\la w u$.
While the first column has positive norm the second has norm zero.
Thus $\mc L_0$ is one-dimensional so that the definiteness condition is violated.
The projection $P$ is given by $\sm{1&0\\ 0&0}$.
It also follows that the deficiency indices are both equal to $1$.

Any boundary condition is determined by an element $([v],[g])\in D_i\oplus D_{-i}$, \ie,
$$(v,g)=\begin{pmatrix}\alpha+\beta&i(\alpha-\beta)\\ -i(\alpha-\beta)x&(\alpha+\beta)x\end{pmatrix}$$
for which $|\alpha|=|\beta|\neq0$.
It reads
$$0=(g^*Ju)(b)-(g^*Ju)(0)=b(\ov{\alpha}+\ov{\beta}) u_1(b)+i(\ov{\alpha}-\ov{\beta})(u_2(b)-u_2(0)).$$
If $\alpha=\beta$ we have a separated boundary condition and obtain $T=\{0\}\exsum L^2(w)$.
Thus $\mc H_\infty=L^2(w)$ and $T_0=\{(0,0)\}$.
Otherwise, if $\alpha\neq\beta$ we have a coupled boundary condition and the self-adjoint restrictions of $T_\mx$ are determined by
$$\mc T=\{(u,f)\in \mc T_\mx:\gamma u_1(b)+u_2(b)-u_2(0)=0\}$$
where $\gamma=-ib(\ov{\alpha}+\ov{\beta})/(\ov{\alpha}-\ov{\beta})$ is an arbitrary real number.
In this case we have $\mc H_\infty=\{f\in L^2(w):(0,f)\in T\}=\{f\in L^2(w):\int_0^b f_1=0\}$ and $\mc H_0=L^2(w)\ominus\mc H_\infty$
consists of the classes represented by functions with a constant first component.
We have $T_0=T\cap(\mc H_0\exsum \mc H_0)=D_{\la_0}$ where $\la_0=\gamma/b$.
We emphasize that the boundary conditions are well defined for $([u],[f])\in T_\mx$ even though the second components of the boundary values are not.

Since we have regular endpoints let us also illustrate Theorem \ref{T5.3}.
The space $N$ introduced in Section \ref{sre} is spanned by $(0,1,0,1)^\top$.
Since $E([v],[g])=(\alpha+\beta,i(\alpha-\beta)(x_0-x))^\top$ we find that $W$ is spanned by the vectors $(1,\pm ix_0,1,\mp i(b-x_0))^\top$.
If $\tilde A=(a_1,a_2,a_3,a_4)$ condition (2) in Theorem \ref{T5.3} requires $(\ov a_2,-\ov a_1,-\ov a_4,\ov a_3)^\top$ to be in $W$ and hence we get $x_0(a_1+a_3)=b a_1$ and $a_4=-a_2$.
Using these requirements, the condition $\tilde A\bb J^{-1}\tilde A^*=0$, \ie, condition (3), gives $\Im((a_1+a_3)\ov{a_2})=0$ so that without loss of generality, we may assume that the $a_j$ are real.
This, and the fact that the first component of $u$ is constant gives, as before, the boundary condition $(a_1+a_3)u_1+a_2(u_2(b)-u_2(0))$.

We now determine $H_\pm$ for the case where $\alpha\neq\beta$.
First note that conditions \eqref{170811.1} and \eqref{170811.2} are again irrelevant.
Since $A_+(\la)=i(\ov\alpha-\ov\beta)(b(\la_0-\la)+x_0\la,1)$, $A_-(\la)=i(\ov\alpha-\ov\beta)(x_0\la,1)$, and $PJ^{-1}P=0$, we obtain
$$H_-(\la)=\frac{1/b}{\la_0-\la}\begin{pmatrix}1&-x_0\la\\ 0&0\end{pmatrix}\text{ and }
  H_+(\la)=\frac{1/b}{\la_0-\la}\begin{pmatrix}1&-b(\la_0-\la)-x_0\la\\ 0&0\end{pmatrix}.$$

Thus, if $\la$ is in the resolvent set of $T$, \ie, if $\la\neq\la_0$, and $f\in L^2(w)$,
$$(E_\la f)(x)=\frac{\int_0^b f_1}{b(\la_0-\la)}\begin{pmatrix}1\\ \la(x_0-x)\end{pmatrix}-\int_{x_0}^x f_1\begin{pmatrix}0\\ 1\end{pmatrix}.$$

The $M$-function becomes
$$M(\la)=\frac1{b(\la_0-\la)}P=\frac{\la_0}{b(\la_0^2+1)}P+\int \big(\frac1{t-\la}-\frac{t}{t^2+1}\big)\nu(t)$$
where $\nu=\frac1b\delta_{\la_0}P$.
We emphasize that for a fixed non-real $\la$ the points $1/(b(\la_0-\la))$ lie on a circle centered at $z_0=i/(2b\Im(\la))$ of radius $|z_0|$ when $\la_0$ is on the real line.
As $b$ tends to infinity this circle shrinks to the point $0$.

The Fourier transform is given by $(\mc Ff)(\la)=\int U(\cdot,\ol)^*wf=(1,0)^\top\int f_1$.
Note that $\ker\mc F=\mc H_\infty$.

Since the spectral projections are
$$S(\omega)=\begin{cases}\bb P&\text{if $\la_0\in\omega$}\\ 0&\text{if $\la_0\not\in\omega$,}\end{cases}$$
when $\bb P$ is the orthogonal projection from $L^2(w)$ to $\mc H_0$,
we obtain
$$\<f,S(-\infty,t)g\>_w=\int\chi_{(-\infty,t)}\hat f^*\nu\hat g$$
for all $f,g\in L^2(w)$.
It follows that $\<f,\bb P g\>=\<\mc Ff,\mc Fg\>_\nu$, \ie, $\mc F:\mc H_0\to L^2(\nu)$ is unitary.

Define $(\mc G\hat f)(x)=\int U(x,\cdot)\nu \hat f= U(x,\la_0)P\hat f(\la_0)/b$ when $\hat f\in L^2(\nu)$ and note that this is in $\dom\mc T_\mx$.
In fact, it follows that $\mc F^* \hat f=[\mc G\hat f]$.
In particular, if $\hat f=\mc F f$, then $(\mc G\hat f)(x)=\frac1b \int f_1 (1,0)^\top\in\bb P [f]$.
Moreover, $(\mc F f)(t)=t (\mc Fu)(t)$, if $(u,f)\in T$.

\begin{comment}
Finally, we discuss briefly the case where $b=\infty$.
In this case the elements $(u,f)$ of $\mc T_\mx$ are
$$u=\begin{pmatrix}0\\ c_2+\int_0^x f_1\end{pmatrix}
 \text{ and }
 f=\begin{pmatrix}f_1 \\f_2 \end{pmatrix}$$
with $c_2\in\bb C$ and $f_1\in \mc L^2(0,b)$.
The relation $T_\mx$ is now self-adjoint since the solutions of $Ju'+qu=\la wu$ have norm zero or infinity, \ie, $n_\pm=0$.

We now have $P_+(\la)=\id-P=\sm{0&0\\ 0&1}$ and we assume for simplicity $c=0$.
Conditions \eqref{170811.2} and \eqref{170811.3} are now irrelevant while conditions \eqref{170811.1} and \eqref{170811.4} give
$$u_0=\sm{-1&0\\ 0&0}J^{-1}\int_0^\infty U(\cdot,\ol)^*wf.$$
Thus $H_+(\la)=\id$
\end{comment}

\appendix
\section{Distributions and measures}\label{ADM}
We collect here the most basic facts about distributions on real intervals and their relationship with measures.
In the case of distributions these may be found, for instance, in the books by Gelfand and Shilov \cite{MR0435831} or H\"ormander \cite{MR1996773}.
Locally distributions of order $0$ may be identified with measures whose theory as expounded by, for instance, Rudin \cite{MR924157} and Folland \cite{MR1681462} we assume known.

\subsection{Distributions}\label{ADM.1}
The space of complex-valued functions defined on $\iOmega$ which have derivatives of all orders and are supported on compact subsets of $\iOmega$, is denoted by $\mc D(\iOmega)$.
These functions are called {\em test functions}\index{test functions}.
A linear functional $q$ on $\mc D(\iOmega)$ is called a {\em distribution}\index{distribution} on $\iOmega$ if for every compact set $K\subset\iOmega$ there are constants $C>0$ and $k\in\bb N_0$ such that
\begin{equation}\label{eq:1.1.1}
|q(\phi)|\leq C \sum_{j=0}^k \sup\{|\phi^{(j)}(x)|:x\in K\}
\end{equation}
whenever the test function $\phi$ has its support in $K$.
The set of all distributions on $\iOmega$ is denoted by $\mc D'(\iOmega)$.
If the integer $k$ in \eqref{eq:1.1.1} can be chosen uniformly for every compact $K\subset\iOmega$, then $q$ is said to have finite order.
The smallest such integer is called the {\em order}\index{order of a distribution} of $q$.
The set of distributions of order at most $k$ is denoted by $\mc D^{\prime k}(\iOmega)$.

The most basic example of a distribution is given by the map $d_f:\phi\mapsto \int\phi f$ where $f$ is a locally integrable complex-valued function on $\iOmega$ and integration is with respect to Lebesgue measure.\footnote{In view of this, we will often identify locally integrable functions with the distributions they generate and, more generally,  write $\int\phi q$ or $\int q\phi$ in place of $q(\phi)$ when $q$ is any distribution.}
However, not all distributions are of that type, the most famous example being $\delta_{x_0}:\phi\mapsto \phi(x_0)$ (assuming $x_0\in\iOmega$).
Both $d_f$ and $\delta_{x_0}$ are distributions of order $0$.

If $q(\phi)=0$ for all $\phi$ whose support is contained in the open set $U\subset\iOmega$ we say that the distribution $q$ vanishes on $U$.
The complement in $\iOmega$ of the largest open set on which $q$ vanishes is called its {\em support}\index{distribution!support of a} and is denoted by $\supp q$.
For example, $\supp \delta_{x_0}=\{x_0\}$.

The set $\mc D'(\iOmega)$ becomes a linear space upon defining $\alpha q_1+\beta q_2$ by $(\alpha q_1+\beta q_2)(\phi)=\alpha q_1(\phi)+\beta q_2(\phi)$ whenever $q_1, q_2\in\mc D'(\iOmega)$ and $\alpha,\beta\in\bb C$.
Similarly, $\mc D^{\prime k}(\iOmega)$ is a linear space for any non-negative integer $k$.

Next note that $\phi\mapsto (-1)^k q(\phi^{(k)})$ is a distribution if $q$ is and if $k$ is a non-negative integer.
This distribution is called the $k$-th {\em derivative}\index{derivative of a distribution} of $q$ and is denoted by $q^{(k)}$.
Note that, if $f$ is locally absolutely continuous, then $(d_f)'=d_{f'}$.

Distributions also have antiderivatives.
To see this fix $\psi\in \mc D(\iOmega)$ with $\int\psi=1$ so that $\varphi(x)=\int(\phi-\psi\int\phi)\chi_{(a,x)}$ defines a test function $\varphi$ for any $\phi\in\mc D(\iOmega)$.
Now, if $q$ is a distribution, define the linear functional $p:\phi\mapsto -q(\varphi)$.
It is easy to check that $p$ is a distribution.
In fact, if $q$ is of order $k>0$, then $p$ is of order $k-1$.
If $q$ is of order $0$, then so is $p$.
Also, since $\int \phi'=0$, we find that $p'(\phi)=-p(\phi')=q(\phi)$, \ie, $p$ is an antiderivative of $q$.
Two antiderivatives of a distribution differ by only a constant as the following lemma shows.

\begin{lem}[Du Bois-Reymond] \label{dBR}
Suppose the derivative of the distribution $p$ is zero.
Then $p$ is the constant distribution, \ie, there is a constant $C$ such that $p(\phi)=C\int\phi$ for all $\phi\in\mc D(\iOmega)$.
\end{lem}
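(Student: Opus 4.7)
The plan is to exploit the very construction of antiderivatives sketched right before the lemma. Fix once and for all a test function $\psi\in\mc D(\iOmega)$ with $\int\psi=1$; I will show that the constant $C=p(\psi)$ does the job.

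First I would observe that for any $\phi\in\mc D(\iOmega)$, the function $\phi-\psi\int\phi$ is a test function whose integral vanishes. Consequently the primitive
$$\varphi(x)=\int(\phi-\psi\int\phi)\chi_{(a,x)}$$
is itself a test function: smoothness is clear, and compact support follows because $\varphi$ is eventually constant on either side of the supports of $\phi$ and $\psi$, with the constant on the right equal to $\int(\phi-\psi\int\phi)=0$. By construction $\varphi'=\phi-\psi\int\phi$ in the classical (and hence distributional) sense.

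Now I would simply test $p$ against this decomposition. Linearity gives
$$p(\phi)=p(\psi)\int\phi+p(\varphi').$$
Since $p'=0$ by hypothesis, $p(\varphi')=-p'(\varphi)\cdot(-1)$; more carefully, by the definition of the distributional derivative, $p'(\varphi)=-p(\varphi')$, so $p(\varphi')=-p'(\varphi)=0$. Therefore $p(\phi)=C\int\phi$ with $C=p(\psi)$, which is the desired conclusion.

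There is essentially no obstacle here; the only item requiring a moment of care is verifying that $\varphi$ really does have compact support, which is exactly why one must subtract the multiple $\psi\int\phi$ before integrating. The independence of $C$ from the auxiliary choice of $\psi$ is automatic from the identity $p(\phi)=C\int\phi$ applied to a second normalized $\psi'$.
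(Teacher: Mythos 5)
Your proof is correct. The paper itself states this lemma without proof (it is a classical fact collected in the appendix), but your argument is the standard one and is exactly the one suggested by the construction the paper sets up in the preceding paragraph: writing $\phi=\psi\int\phi+\varphi'$ with $\varphi(x)=\int(\phi-\psi\int\phi)\chi_{(a,x)}$ a genuine test function, and then killing the $\varphi'$ term using $p(\varphi')=-p'(\varphi)=0$. The only blemish is the garbled intermediate expression ``$p(\varphi')=-p'(\varphi)\cdot(-1)$,'' which you immediately correct; the verification that $\varphi$ has compact support and the identification $C=p(\psi)$ are both handled properly.
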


We may define the {\em conjugate}\index{distribution!conjugate} of a distribution $q$ by
$$\ov{q}(\phi)=\ov{q(\ov{\phi})}$$
since $\ov{\phi}$ is a test function if and only if $\phi$ is.
Note that $\ov{\ov q}=q$.
The distribution $q$ is called {\em real}\index{real distribution}\index{distribution!real} if $q=\ov q$.
Equivalently, $q$ is real if $q(\phi)\in\bb R$ whenever $\phi$ assumes only real values.
Finally, $q$ is called \textit{non-negative}\index{non-negative distribution}\index{distribution!non-negative}, if $q(\phi)\geq0$ whenever $\phi\geq0$.
Every non-negative distribution is of order $0$ (H\"ormander \cite{MR1996773}, Theorem 2.1.7).

\subsection{Lebesgue-Stieltjes measures and distributions of order \texorpdfstring{$0$}{0}}\label{ADM.2}
Suppose $I$ is an interval in $\bb R$.
For a function $Q:I\to\bb C$ the \textit{variation}\index{variation of a function} of $Q$ over $I$ is
$$\Var_Q(I)=\sup\Big\{\sum_{j=1}^n |Q(x_j)-Q(x_{j-1})|: x_j\in I, x_0<x_1< ... <x_n\Big\}.$$
If $\Var_Q(I)$ is finite, we say that $Q$ is of \textit{bounded variation}\index{bounded variation} on $I$.
If $\Var_Q(K)$ is finite for all compact subintervals $K$ of $I$, then $Q$ is said to be of \textit{locally bounded variation}\index{locally bounded variation}\index{bounded variation!locally} on $I$.
Clearly, every non-decreasing function is of locally bounded variation.
The set of functions, which are of locally bounded variation, forms a vector space denoted by $\BVl(I)$.
The functions of bounded variation constitute a subspace of $\BVl(I)$, which is denoted by $\BV(I)$.
In fact, $\BV(I)$ is a Banach space with norm $\vertiii Q=|Q(c)|+\Var_Q(I)$ when $c$ is a fixed point in $I$.

Given a function $Q\in\BVl(\iOmega)$ we define the corresponding right- and left-continuous functions $Q^+$ and $Q^-$ by setting $Q^+(x)=\lim_{t\downarrow x}Q(t)$ and $Q^-(x)=\lim_{t\uparrow x}Q(t)$.
We also define $Q^\#(x)=(Q^+(x)+Q^-(x))/2$ which we call the balanced representative of $Q$.
Correspondingly we introduce the spaces $\BVl^+(\iOmega)$, $\BVl^-(\iOmega)$, and $\BVl^\#(\iOmega)$ which collect, respectively, all right-continuous, left-continuous, and balanced functions which are of locally bounded variation on $(a,b)$.

A function $Q\in\BVl(\iOmega)$ generates a finite complex measure $dQ$, called a \textit{Lebes\-gue-Stieltjes measure}\index{Lebesgue-Stieltjes measure}, on any compact subinterval $K$ of $(a,b)$.
Any Lebesgue-Stieltjes measures on $K$ is defined at least on the collection of Borel-measur\-able sets contained in $K$.
In particular, $dQ([x,y])=Q^+(y)-Q^-(x)$ whenever $[x,y]\subset(a,b)$.
Unless $Q$ is either of bounded variation or non-decreasing $dQ$ cannot be extended to a measure on $(a,b)$.
However, the \textit{variation function}\index{variation function} $V_Q$, defined by
$$V_Q(x)=
\begin{cases}
\Var_Q([c,x])&\text{if $x>c$,}\\
0&\text{if $x=c$,}\\
-\Var_Q([x,c])&\text{if $x<c$,}
\end{cases}$$
is non-decreasing and hence generates a positive measure $dV_Q$ on $(a,b)$.
In fact, $dV_{Q^-}$ is the total variation measure of $dQ$ when both are restricted to a compact set.
To simplify notation we put $\hat Q=V_{Q^-}$ in the following.
Since $dQ$ is absolutely continuous with respect to $d\hat Q$, we may define the Radon-Nikodym derivative $h=dQ/d\hat Q$ of $dQ$ with respect to $d\hat Q$.
Note that $h$ has absolute value $1$.
If $f\in L^1(d\hat Q)$ it is customary to write $\int f\ dQ$ for $\int fh\ d\hat Q$.
In particular, if $E\subset K$, then $dQ(E)=\int\chi_E dQ=\int\chi_Eh d\hat Q$.

Next, given a $Q\in\BVl(\iOmega)$, the assignment $\phi\mapsto \int\phi\ dQ$ for $\phi\in\mc D(\iOmega)$ is a distribution of order $0$.
In fact, every distribution of order $0$ is of this nature by the following variant of Riesz's representation theorem.
\begin{thm}
If $q$ is a distribution of order $0$, then there is a function $Q\in\BVl(\iOmega)$ such that $q(\phi)=\int\phi q=\int\phi dQ$.
\end{thm}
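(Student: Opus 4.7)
The plan is to identify a distribution of order zero with a locally finite complex Borel measure on $\iOmega$, and then realize that measure as the Lebesgue--Stieltjes measure generated by an element of $\BVl(\iOmega)$. First I would show that any distribution $q\in\mc D^{\prime 0}(\iOmega)$ extends uniquely to a continuous linear functional $\tilde q$ on the space $C_c(\iOmega)$ of compactly supported continuous functions, when the latter is equipped with its usual inductive-limit topology. The key point is that the defining estimate \eqref{eq:1.1.1} for order zero reduces to $|q(\phi)|\leq C_K\|\phi\|_\infty$ for all test functions $\phi$ supported in a fixed compact $K\subset\iOmega$; since $\mc D(\iOmega)$ is dense in $C_c(\iOmega)$ in that sup-norm (by a standard mollification argument), $q$ extends to a bounded functional on each $C_K(\iOmega):=\{f\in C_c(\iOmega):\supp f\subset K\}$, compatibly as $K$ grows, giving $\tilde q$.

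Next I would obtain a complex Radon measure representation. Splitting $\tilde q$ into its real and imaginary parts and then decomposing each real part into positive and negative pieces via the standard Jordan-type construction (for $\phi\geq 0$, set $\tilde q_+(\phi)=\sup\{\tilde q(\psi):0\leq\psi\leq\phi\}$ on each $C_K$, and extend), one writes $\tilde q=\sum_{j=1}^4 i^{j-1}\tilde q_j$ with each $\tilde q_j$ a positive linear functional on $C_c(\iOmega)$. The classical Riesz representation theorem then supplies positive Radon measures $\mu_j$ on $\iOmega$, necessarily finite on every compact set, such that $\tilde q_j(\phi)=\int\phi\, d\mu_j$. The combined signed/complex measure $\mu=\sum i^{j-1}\mu_j$ then satisfies $q(\phi)=\int\phi\, d\mu$ for every $\phi\in\mc D(\iOmega)$, and its total variation $|\mu|$ is finite on every compact $K\subset\iOmega$.

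Now I would manufacture the function $Q$. Fix an auxiliary point $c\in\iOmega$ and define
\[
Q(x)=\begin{cases}\mu([c,x)) &\text{if }x\geq c,\\ -\mu([x,c)) &\text{if }x<c.\end{cases}
\]
Because $|\mu|$ is finite on any compact subinterval $[s,t]\subset\iOmega$, the variation of $Q$ on $[s,t]$ is bounded by $|\mu|([s,t])$, so $Q\in\BVl(\iOmega)$; moreover $Q$ is left-continuous by dominated convergence. By construction the Lebesgue--Stieltjes measure $dQ$ agrees with $\mu$ on every half-open interval $[s,t)\subset\iOmega$, hence on the Borel $\sigma$-algebra by Carathéodory's uniqueness theorem. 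Consequently $\int\phi\, dQ=\int\phi\, d\mu=q(\phi)$ for every test function $\phi$, which is the required identity.

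The main technical obstacle is the density/extension step together with the positive decomposition: one must verify that the order-zero estimate really does give continuity in the sup-norm uniformly on each $C_K$ (so $\tilde q$ is well defined and continuous), and that the Jordan-style formula $\tilde q_+(\phi)=\sup\{\tilde q(\psi):0\leq\psi\leq\phi\}$ really produces a linear functional when extended from nonnegative to general $\phi$. Everything else—the Riesz representation theorem, the passage from a measure to its distribution function, and the check that $dQ=\mu$—is routine once this identification has been made.
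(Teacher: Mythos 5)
The paper does not actually prove this statement: it is quoted in Appendix \ref{ADM.2} as a known ``variant of Riesz's representation theorem,'' with the underlying measure theory delegated to Rudin and Folland. So there is no in-paper argument to compare against, and your proposal supplies the standard proof of exactly that variant. Your chain of reasoning is sound: the order-zero estimate \eqref{eq:1.1.1} with $k=0$ gives sup-norm continuity on each $C_K$, mollification extends $q$ to $C_c(\iOmega)$ (note only that the mollified approximants live in a slightly larger compact $K'$, so the relevant constant is $C_{K'}$), the Jordan-type decomposition reduces to four positive functionals, Riesz gives locally finite positive Radon measures, and the left-continuous cumulative function $Q(x)=\pm\mu([c,x))$ lies in $\BVl(\iOmega)$ with $dQ=\mu$ on half-open intervals and hence on all Borel sets. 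You have also correctly ordered the two delicate steps: the additivity of $\tilde q_+$ on nonnegative functions requires splitting $0\leq\psi\leq\phi_1+\phi_2$ as $\psi=\min(\psi,\phi_1)+(\psi-\min(\psi,\phi_1))$, which destroys smoothness, so it is essential that this is done \emph{after} the extension to $C_c(\iOmega)$ rather than inside $\mc D(\iOmega)$ --- your write-up does this. The only cosmetic caveat is that $\mu$ is a locally finite complex set function rather than a complex measure in the global sense, but since the conclusion only requires $Q\in\BVl(\iOmega)$ (not $\BV$), working on compact subintervals as you do is exactly right.
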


The theorem entails that distributions of order $0$ on $(a,b)$ are in one-to-one correspondence with set functions which locally are measures on $(a,b)$ and we will colloquially use the expressions ``measure'' and ``distributions of order $0$'' interchangeably.
Consequently, if $w$ is a non-negative distribution of order $0$, we will use the notation $\mc L^1(w)$ (or $\mc L^1_{\rm loc}(w)$) for the set of functions $f$ which are (locally) integrable with respect to the measure corresponding to $w$.

For functions of locally bounded variation we have the following integration by parts formula (see Hewitt and Stromberg \cite{MR0188387}, Theorem 21.67 and Remark 21.68):
\begin{lem} \label{ibyp}
If $F,G\in \BVl(\iOmega)$, then
$$\int_{[x_1,x_2)} (F^+ dG + G^- dF)=(FG)^-(x_2)-(FG)^-(x_1)$$
whenever $[x_1,x_2]\subset(a,b)$.
\end{lem}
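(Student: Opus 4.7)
The plan is to prove the identity by a Fubini argument applied to the product Lebesgue-Stieltjes measure on the square $[x_1,x_2)\times[x_1,x_2)$. Since the statement is linear in $F$ and $G$, I would first reduce to the case where $F$ and $G$ are non-decreasing real functions: decompose each of $F,G$ into real and imaginary parts, then write each real part as $(V_{F^-}+F^-)/2-(V_{F^-}-F^-)/2$, a difference of non-decreasing functions. Both $F^+dG+G^-dF$ and $(FG)^-$ depend linearly on $F$ and $G$, so it suffices to treat the case where $dF$ and $dG$ are bona fide positive Lebesgue-Stieltjes measures.

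Assuming this, I would form the product measure $dF\otimes dG$ on the square $S=[x_1,x_2)\times[x_1,x_2)$. Its total mass is $(F^-(x_2)-F^-(x_1))(G^-(x_2)-G^-(x_1))$. Now partition $S$ into the disjoint sets $T_1=\{(s,t)\in S:s\leq t\}$ (closed diagonal included) and $T_2=\{(s,t)\in S:s>t\}$. By Fubini,
\begin{align*}
\iint_{T_1}dF(s)\,dG(t)&=\int_{[x_1,x_2)}\bigl(F^+(t)-F^-(x_1)\bigr)dG(t)\\
&=\int_{[x_1,x_2)}F^+\,dG-F^-(x_1)\bigl(G^-(x_2)-G^-(x_1)\bigr),
\end{align*}
while swapping the order of integration on $T_2$ gives
\begin{align*}
\iint_{T_2}dF(s)\,dG(t)&=\int_{[x_1,x_2)}\bigl(G^-(s)-G^-(x_1)\bigr)dF(s)\\
&=\int_{[x_1,x_2)}G^-\,dF-G^-(x_1)\bigl(F^-(x_2)-F^-(x_1)\bigr).
\end{align*}
Here I use that $\int_{[x_1,t]}dF(s)=F^+(t)-F^-(x_1)$ and $\int_{[x_1,s)}dG(t)=G^-(s)-G^-(x_1)$, which are the defining relations between a non-decreasing function and the associated measure.

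Adding these two equations and equating the sum to the total mass computed above, the cross terms rearrange into $F^-(x_2)G^-(x_2)-F^-(x_1)G^-(x_1)$. Since both $F$ and $G$ have one-sided limits at every point, $(FG)^-=F^-G^-$, which yields the stated formula. The main delicate point, and really the only one, is the placement of the diagonal $\{s=t\}$: by putting it in $T_1$ (the ``$\leq$'' piece) we force the $F^+$ to appear paired with $dG$, while the strict inequality in $T_2$ forces $G^-$ to appear paired with $dF$; this is precisely the asymmetry between $F^+dG$ and $G^-dF$ in the claim, and it is what allows the formula to remain valid when $F$ and $G$ share points of discontinuity.
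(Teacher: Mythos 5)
Your argument is correct, and it is essentially the standard proof of this identity: the paper does not prove the lemma itself but cites Hewitt and Stromberg (Theorem 21.67 and Remark 21.68), where the result is established by exactly this device of applying Fubini to $dF\otimes dG$ on the square and splitting along the diagonal, with the placement of the diagonal accounting for the $F^+$ versus $G^-$ asymmetry. The bilinearity reduction to non-decreasing functions and the identification $(FG)^-=F^-G^-$ are both sound, so there is nothing to add.
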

Since $\int_{\{x\}}(F^+dG + G^-dF) = (FG)^+(x)-(FG)^-(x)$ it is easy to extend the integration by parts formula to other kinds of intervals.
This result may be rephrased as a product rule for functions of locally bounded variation:
\begin{equation}\label{170410.3}
(FG)'=F^+G'+F'G^-.
\end{equation}

Lemma \ref{ibyp} shows that the map $\phi\mapsto \int\phi Q$ is an antiderivative of $\phi\mapsto \int\phi dQ$ when $Q\in\BVl(\iOmega)$.
This allows us to use the following notation
$$\int f Q'= \int f dQ$$
whenever $f$ is integrable with respect to $d\hat Q$.

We close this section with the following observation.
Suppose $q$ is a distribution of order zero with antiderivative $Q$ and $f\in\mc L^1_{\rm loc}(d\hat Q)$.
Then
\begin{equation}\label{170414.1}
fq=qf:\phi\mapsto \int \phi f\ dQ
\end{equation}
is also a distribution of order zero.
Moreover, $\ov{qf}=\ov{q}\ov{f}$.

\subsection{Matrix- and vector-valued distributions}\label{ADM.3}
If $\mc K$ is a set of numbers, functions, or operators we denote the set of $m\times n$-matrices whose entries are elements of $\mc K$ by $\mc K^{m\times n}$.
Here $m$ and $n$ denote the number of rows and columns, respectively. If $n=1$ we may write $\mc K^m$ instead of $\mc K^{m\times1}$.
In particular, $\bb C^m$ is a space of columns of $m$ complex numbers.
If $x\in\bb C^m$, then $x^*$ denotes the row whose entries are the complex conjugates of the entries of $x$.
Of course, $\bb C^m$ is a Hilbert space under the scalar product $x^*y$ (which is linear in the second argument).
We denote the elements of the canonical basis in $\bb C^m$ by $\cbv_k$, $k=1, ...,m$, \ie, the $j$-th component of $\cbv_k$ equals $1$ if $j=k$ and $0$ otherwise.

For a matrix-valued function $Q$ we define $\Var_Q$ and $V_Q$ analogously to the case of scalar-valued functions except that we use the $|\cdot|_1$-norm instead of the absolute value.
It is easy to verify that
\begin{equation}\label{170923.1}
\Var_Q(I)=\sum_{j=1}^{m}\sum_{k=1}^{n}\Var_{Q_{j,k}}(I).
\end{equation}
The definitions of bounded and locally bounded variation extend then immediately to the matrix-valued case.
In particular, $\BV(\iOmega)^{m\times n}$ and $\BV^\#(\iOmega)^{m\times n}$ are Banach spaces with norm $\vertiii Q=|Q(c)|_1+\Var_Q(a,b)$ when $c$ is a fixed point in $(a,b)$.

For $q\in\mc D^{\prime0}((a,b))^{m\times n}$ with associated antiderivative $Q$ we define $\Delta_q(x)=Q^+(x)-Q^-(x)$.
If $I$ is a subinterval of $(a,b)$ we have
\begin{equation}\label{180320.1}
\sup_{x\in I}\vert\Delta_q(x)\vert_1\leq\sum_{x\in I}\vert\Delta_q(x)\vert_1\leq\Var_{Q}(I)=\int_I dV_Q
\end{equation}
assuming that $Q$ is left-continuous.

Recall that a matrix $M$ in $\bb C^{n\times n}$ is called non-negative if $a^*Ma\geq0$ for all $a\in\bb C^n$.
Similarly, a $\bb C^{n\times n}$-valued function $W$ defined on some interval is called non-decreasing, if $a^*Wa$ is non-decreasing whenever $a\in\bb C^n$.

If $w\in\mc D^{\prime0}(\iOmega)^{m\times n}$ we define $w^*\in\mc D^{\prime0}(\iOmega)^{n\times m}$ by setting $(w^*)_{k,\ell}=\ov{w_{\ell,k}}$ extending the usual definition of adjoints of matrices of numbers.
A distribution $w\in\mc D^{\prime0}(\iOmega)^{n\times n}$ is called Hermitian if $w^*=w$.
It is called non-negative, if the distribution $a^*wa$ is non-negative for any $a\in\bb C^n$.
Of course, this definition agrees with the previous one, if $n=1$.
Note that a non-negative distribution must be Hermitian and that its diagonal elements and thus its trace must each be non-negative distributions, too.

Now assume that $w$ is a non-negative distribution, \ie, it has a non-decreasing antiderivative $W$, and let $S=\tr W$, a non-decreasing scalar function, be an antiderivative of $\tr w$.
It follows that $S\id-W$ is also non-decreasing
%If $0\leq A$, then the eigenvalues of $A$ and hence $\tr A$ are all non-negative.
%Also, if $x$ is an eigenvector of $A$ associated with $\rho$, then $x$ is also an eigenvector of $(\tr A)-A$ associated with $(\tr A)-\rho\geq0$.
and this, in turn implies that the measures generated by the $W_{j,k}$ on a compact interval $K\subset(a,b)$ are absolutely continuous with respect to $dS$.
Thus, by the Radon-Nikodym theorem, there is a locally $dS$-integrable matrix $\RN$ such that $w=\RN dS$.
For each $a\in\bb C^n$ we have $0\leq a^*\RN a\leq \tr \RN=1$ outside a set of $dS$-measure $0$.
Since vectors with rational components are dense in $\bb C^n$ we find that $0\leq \RN\leq \tr \RN=1$ pointwise almost everywhere with respect to $dS$.

Next we introduce the vector space $\mc L^1_\loc(w)$ of all $\bb C^n$-valued functions $f$ on $\iOmega$ such that each component of $\RN f$ is locally integrable with respect to $dS$.
Similarly, $\mc L^2(w)$ is the vector space of those functions $f$ for which $\int f^*\RN f dS<\infty$.
Note that $\mc L^2(w)$ is contained in $\mc L^1_\loc(w)$.
The assignment $\<f,g\>=\int f^*\RN g dS$ defines a semi-scalar product and thus a semi-norm $\|f\|=\<f,f\>^{1/2}$.
As usual, identifying elements $\mc L^2(w)$ whose differences have norm $0$, one arrives, as first shown by Kac~\cite{MR0080280}, at a Hilbert space which will be denoted by $L^2(w)$.

Finally, following \eqref{170414.1}, we define the distribution $wf\in \mc D^{\prime0}(\iOmega)^{n}$ componentwise by setting $(wf)(\phi)=\int (\RN f) \phi dS$ whenever $f\in \mc L^1_\loc(w)$.

\subsection{Dependence on a parameter}\label{ADM.4}
Suppose $\Omega$ is a subset of $\bb C$ and $r$ a function from $\Omega$ to $\mc D^{\prime0}((a,b))^{n\times m}$ with associated left-continuous antiderivative $R(.,\lambda)$.
If $r_0\in\mc D^{\prime0}((a,b))^{n\times m}$ with left-continuous antiderivative $R_0$, we say $r(\lambda)$ {\em converges} to $r_0$ in {\em variation}\index{convergence in variation} as $\lambda$ tends to $\lambda_0$, a limit point of $\Omega$, if $\lim_{\lambda\rightarrow\lambda_0}\Var_{R(.,\lambda)-R_0}([c,d])=0$ whenever $[c,d]\subset(a,b)$.
The limit $r_0$ is then uniquely determined.
If $\Omega$ is an open subset of $\bb C$, we say that $r$ is \textit{continuous}\index{continuous} in $\Omega$ if $r(\lambda)$ converges to $r(\lambda_0)$ in variation for each $\lambda_0\in\Omega$.
Moreover, $r$ is \textit{analytic}\index{analytic} in $\Omega$, if for each $\lambda_0\in\Omega$ there exists $\dot r(\lambda_0)\in\mathcal{D}^{\prime0}((a,b))^{n\times m}$, called the derivative of $r$ at $\lambda_0$,
such that $(r(\lambda)-r(\lambda_0))/(\lambda-\lambda_0)$ converges to $\dot r(\lambda_0)$ in variation as $\lambda$ tends to $\lambda_0$.
Obviously, if $r$ is analytic in $\Omega$, then $r$ is continuous in $\Omega$.
Furthermore, it follows then from \eqref{180320.1} that $\la \mapsto\Delta_{r(\la)}(x)$ is analytic in $\Omega$ for each $x\in(a,b)$ and its derivative at $\la_0$ is given by $\Delta_{\dot{r}(\la_0)}(x)$.

\begin{thm}\label{T:A.4}
Let $\Omega$ be an open subset of $\bb C$ and let $A\in\bb C^{n\times n}$ be an invertible matrix.
Let $r:\Omega\to\mc D^{\prime0}((a,b))^{n\times n}$ be continuous in $\Omega$.
If $[s,t]\subset(a,b)$ and $K$ is a compact subset of $\Omega$, then the following statements hold:
\begin{itemize}
\item[(a)] The matrix $A+\Delta_{r(\la)}(x)$ is bounded in $[s,t]\times K$.
\item[(b)] If $\det(A+\Delta_{r(\la)}(x))\neq0$ for all $(x,\la)\in[s,t]\times K$, then $\det(A+\Delta_{r(\la)}(x))$ is bounded away from zero in $[s,t]\times K$.
Therefore $(A+\Delta_{r(\la)}(x))^{-1}$ is bounded in $[s,t]\times K$.
\item[(c)] If $r(\la)=\la w-q$ where $q,w\in\mc D^{\prime0}((a,b))^{n\times n}$ and $\det(A-\Delta_q(x))\neq0$ for all $x\in[s,t]$, then the number of points $(x,\la)\in[s,t]\times K$ such that $\det(A+\la \Delta_{w}(x)-\Delta_q(x))=0$ is finite.
\end{itemize}
\end{thm}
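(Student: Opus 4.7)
My approach rests on two pillars: the continuity of $r$ in variation (which yields uniform-in-$x$ control on compact subintervals as $\la$ moves), together with the summability bound \eqref{180320.1}, $\sup_{x\in I}|\Delta_{r(\la)}(x)|_1\leq\sum_{x\in I}|\Delta_{r(\la)}(x)|_1\leq \Var_{R(\cdot,\la)}(I)$, which controls the jumps as $x$ varies over a fixed compact set. I would prove (a), (b), (c) in this order since each feeds into the next.

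For (a), fix any $\la_0\in K$. Continuity of $r$ at $\la_0$ and \eqref{180320.1} together yield a neighborhood $N_{\la_0}\subset\Omega$ on which $\sup_{x\in[s,t]}|\Delta_{r(\la)}(x)-\Delta_{r(\la_0)}(x)|_1\leq \Var_{R(\cdot,\la)-R(\cdot,\la_0)}([s,t])\leq 1$. Since $R(\cdot,\la_0)\in\BVl$ one has $\sup_{x\in[s,t]}|\Delta_{r(\la_0)}(x)|_1<\infty$, so $\sup_{x\in[s,t]}|\Delta_{r(\la)}(x)|_1$ is bounded on $N_{\la_0}$. Extracting a finite subcover of $K$ yields the uniform bound on $A+\Delta_{r(\la)}(x)$.

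For (b) I would argue by contradiction: if $\det(A+\Delta_{r(\la)}(x))$ is not bounded away from zero, pick $(x_n,\la_n)\in[s,t]\times K$ with determinant tending to $0$, and pass to convergent subsequences $\la_n\to\la^*\in K$, $x_n\to x^*\in[s,t]$. I would then split into two cases. If $x_n=x^*$ infinitely often, $\la$-continuity applied pointwise at $x^*$ together with continuity of $\det$ and the boundedness from (a) forces $\det(A+\Delta_{r(\la^*)}(x^*))=0$, contradicting the hypothesis. Otherwise I pass to a subsequence of pairwise distinct $x_n$; here the critical move is to replace $\Delta_{r(\la_n)}(x_n)$ by $\Delta_{r(\la^*)}(x_n)$ using the \emph{uniform-in-$x$} version of continuity, and then invoke summability: only finitely many $x\in[s,t]$ have $|\Delta_{r(\la^*)}(x)|_1$ exceeding a given threshold, so $|\Delta_{r(\la^*)}(x_n)|_1\to 0$ along the chosen subsequence, whence $\det(A+\Delta_{r(\la_n)}(x_n))\to\det A\neq 0$, a contradiction. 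I expect this to be the main obstacle, because $x\mapsto\Delta_{r(\la)}(x)$ is genuinely discontinuous in $x$, so the textbook compactness-plus-continuity argument cannot be applied directly; summability is what compensates.

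For (c), with $r(\la)=\la w-q$, the function $p_x(\la)=\det(A+\la\Delta_w(x)-\Delta_q(x))$ is a polynomial in $\la$ of degree at most $n$, and the hypothesis $\det(A-\Delta_q(x))\neq 0$ on $[s,t]$ says $p_x(0)\neq 0$, so $p_x$ is never identically zero and admits at most $n$ roots for each fixed $x$. Outside the jump set of $w$ and $q$ one has $p_x(\la)=\det A\neq 0$, so only jump points contribute zeros. Suppose for contradiction there are infinitely many pairs $(x_k,\la_k)\in[s,t]\times K$ with $p_{x_k}(\la_k)=0$. The per-$x$ bound of $n$ roots forces infinitely many \emph{distinct} $x_k$ to occur; applying \eqref{180320.1} to both $w$ and $q$ gives $|\Delta_w(x_k)|_1+|\Delta_q(x_k)|_1\to 0$, and since $\la_k$ stays in the bounded set $K$, one concludes $p_{x_k}(\la_k)\to \det A\neq 0$, a contradiction.
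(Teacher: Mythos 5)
Your argument is correct and follows essentially the same route as the paper's proof: (a) from \eqref{180320.1} together with continuity in variation and compactness of $K$; (b) and (c) by the identical contradiction--subsequence dichotomy (eventually constant $x_n$ versus pairwise distinct $x_n$, with summability of the jumps forcing $\Delta_{r(\la^*)}(x_n)\to0$ along distinct points, and the nonvanishing of the polynomial $\la\mapsto\det(A+\la\Delta_w(x)-\Delta_q(x))$ at $\la=0$ forcing infinitely many distinct $x_k$ in (c)). The only step you leave implicit is the final sentence of (b), which the paper dispatches by writing the inverse in terms of cofactors and the determinant, both of which are now bounded.
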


\begin{proof}
(a) follows from \eqref{180320.1} and the continuity of $\lambda\mapsto\Var_{R(.,\lambda)}([s,t])$ on $K$.

For (b) assume to the contrary that $j\mapsto(x_j,\la_j)$ is a sequence in $[s,t]\times K$ such that $\lim_{j\rightarrow\infty}\det(A+\Delta_{r(\la_j)}(x_j))=0$.
We may assume that $(x_j,\la_j)$ converges to $(x,\la)\in[s,t]\times K$.
If $\{x_j:j\in\bb N\}$ is finite, this implies that $\lim_{j\rightarrow\infty}\det(A+\Delta_{r(\la_j)}(x))=0$.
But $\lim_{j\rightarrow\infty}\vert\Delta_{r(\la_j)}(x)-\Delta_{r(\la)}(x)\vert_1=0$ so $\det(A+\Delta_{r(\la)}(x))=0$ which is a contradiction.
On the other hand, if $\{x_j:j\in\bb N\}$ is infinite, then we may assume that $j\mapsto x_j$ is a sequence of distinct points.
It follows from $\vert\Delta_{r(\la_j)}(x_j)\vert_1\leq\vert\Delta_{r(\la_j)-r(\la)}(x_j)\vert_1+\vert\Delta_{r(\la)}(x_j)\vert_1$ and \eqref{180320.1} that
$\lim_{j\to\infty}\vert\Delta_{r(\la_j)}(x_j)\vert_1=0$ which implies that $\det(A)=0$.
This contradiction finishes the proof of the first part of (b).
The last statement in (b) follows since the inverse of a matrix may be written in terms of its cofactors and its determinant.

To prove (c) note that $\Delta_{r(\la)}(x)=\la\Delta_w(x)-\Delta_q(x)$.
Assume to the contrary that $j\mapsto(x_j,\la_j)$ is a sequence of distinct points in $[s,t]\times K$ such that $\det(A+\Delta_{r(\la_j)}(x_j))=0$.
Note that the invertibility of $A-\Delta_q(x)$ for all $x\in[s,t]$ implies that the set $\{x_j:j\in\bb N\}$ is infinite.
Therefore we may assume that $j\to x_j$ is a sequence of distinct points and that $j\to\la_j$ converges to, say, $\la$.
But $\vert\Delta_{r(\la_j)}(x_j)\vert_1\leq\vert\Delta_{r(\la_j)-r(\la)}(x_j)\vert_1+\vert\Delta_{r(\la)}(x_j)\vert_1$ and the right-hand side tends to zero as $j$ tends to infinity so $\det(A)=0$ and this contradiction completes the proof of (c).
\end{proof}

\section{Linear relations}\label{ALR}
We present here the basic facts about linear relations and their spectral theory.
It appears that linear relations were introduced by Arens \cite{MR0123188} while Orcutt \cite{Orcutt} was the first to use them to investigate a system of differential equations.
Our presentation in Sections \ref{ALR.1} -- \ref{ALR.3} follows closely Bennewitz's paper \cite{MR0454398} but we added Theorem \ref{T:B.6}.

\subsection{Basic definitions}\label{ALR.1}
Let $\mc H$ and $\mc H_k$ (for various $k$) be Hilbert spaces with scalar products $\<\cdot,\cdot\>$ and $\<\cdot,\cdot\>_k$, respectively.
We consider $\mc H_1\exsum\mc H_2$ to be the external direct sum of $\mc H_1$ and $\mc H_2$, \ie, a Hilbert space with scalar product $\<(u,f),(v,g)\>=\<u,v\>+\<f,g\>$.
A (closed) \textit{linear relation}\index{linear relation}\index{relation} in $\mc H_1\exsum\mc H_2$ is a (closed) linear subspace of $\mc H_1\exsum\mc H_2$.
The {\em domain}\index{domain of a relation} and the {\em range}\index{range of a relation} of a linear relation $S$ in $\mc H_1\exsum\mc H_2$ are the sets $\dom(S)=\{u\in\mc H_1:\exists f: (u,f)\in S\}$ and $\ran(S)=\{f\in\mc H_2:\exists u: (u,f)\in S\}$, respectively.
The set $\ker(S)=\{u\in\mc H_1: (u,0)\in S\}$ is called the \textit{kernel}\index{kernel} of $S$.
Clearly, $\dom(S)$ and $\ker(S)$ are subspaces of $\mc H_1$ and $\operatorname{ran}(S)$ is a subspace of $\mc H_2$.
If $T$ is also a linear relation and $S\subset T$, then $T$ is called an \textit{extension}\index{extension} of $S$ and $S$ is called a \textit{restriction}\index{restriction} of $T$.
A linear relation $S\subset\mc H_1\exsum\mc H_2$ is called \textit{densely defined}\index{densely defined} if $\dom(S)$ is a dense subset of $\mc H_1$.
A linear relation $S$ is called a \textit{linear operator}\index{linear operator}\index{operator}, if $(u,f)\in S$ and $(u,g)\in S$ imply $f=g$.
A linear operator $S\subset \mc H_1\exsum\mc H_2$ is called \textit{bounded}\index{bounded linear operator}, if there is a number $C\geq0$ such that $\|f\|_2\leq C \|u\|_1$ whenever $(u,f)\in S$.
Note that with these notations we do not distinguish between a relation (or an operator) and its graph.
Nevertheless we will sometimes write $S:\dom(S)\to \mc H_2$ instead of $S\subset \mc H_1\exsum\mc H_2$ and $Sx=f$ instead of $(x,f)\in S$ when $S$ is an operator.
If $\mc H=\mc H_1=\mc H_2$ we denote the {\em identity operator}\index{identity operator} $\{(u,u): u\in \mc H\}$ by $\id$.

Next we define an addition and a scalar multiplication of linear relations in $\mc H_1\exsum\mc H_2$.
Suppose $S$ and $T$ are two such relations and $\alpha$ is a complex number.
Then we set
$$S+T=\{(u,f+g): (u,f)\in S, (u,g)\in T\}$$
and
$$\alpha S=\{(u,\alpha f): (u,f)\in S\}.$$
In particular, the domain of $S+T$ is the intersection of the domains of $S$ and $T$.
We emphasize that this notion of addition must not be confused with a (direct) sum of subspaces.
The operation of addition is associative and commutative and the {\em zero operator}\index{zero operator} $\{(u,0):u\in\mc H_1\}$ is the additive identity element.
However, not every relation has an additive inverse.
Indeed to have an additive inverse it is necessary (and sufficient) for a relation to be an everywhere defined operator.

We also define composite relations
$$S\circ T=\{(u,w)\in\mc H_1\exsum\mc H_3: \exists v\in\mc H_2: (u,v)\in T, (v,w)\in S\},$$
if $T$ and $S$ are linear relations in $\mc H_1\exsum\mc H_2$  and $\mc H_2\exsum\mc H_3$, respectively.
We will mostly abbreviate $S\circ T$ by $ST$.

Each linear relation has an inverse and an adjoint.
The {\em inverse}\index{inverse of a relation} of $S\subset\mc H_1\exsum\mc H_2$ is
$$S^{-1}=\{(f,u)\in\mc H_2\exsum\mc H_1: (u,f)\in S\}.$$
The \textit{adjoint}\index{adjoint} of $S\subset\mc H_1\exsum\mc H_2$ is
$$S^*=\{(v,g)\in\mc H_2\exsum\mc H_1: \forall (u,f)\in S: \<g,u\>_1=\<v,f\>_2\}.$$
One checks easily that $S^{-1}$ and $S^*$ are linear relations themselves.

Suppose $S$ and $T$ are linear relations in $\mc H_1\exsum\mc H_2$.
Then the following statements hold.
 \begin{enumerate}
  \item If $S\subset T$, then $T^*\subset S^*$.
  \item $S^*$ is a closed linear relation and $S^{**}=\ov S$, the closure of $S$.
  \item If $S$ is a linear relation, then $\ker S^*=(\ran S)^\perp$.
 \end{enumerate}

A linear relation $E$ is called {\em symmetric}\index{symmetric}, if $E\subset E^*$ and {\em self-adjoint}\index{self-adjoint} if $E=E^*$.
Note that, if $E$ is symmetric, then so is its closure $\ov E$.

\subsection{Resolvents}\label{ALR.2}
Throughout this section we assume that $E$ is a linear relation in $\mc H\exsum\mc H$.
For $\la\in\bb C$ we define the {\em deficiency spaces}\index{deficiency space} $D_\la=\{(u,\la u)\in E^*\}$.

If $E$ is closed, one defines the set $\rho(E)$ of those complex numbers $\la$ for which $(E-\la)^{-1}$ is a closed linear operator with domain $\mc H$, \ie,
$$\rho(E)=\{\la\in\bb C: \ker(E-\la)=\{0\}, \ran(E-\la)=\mc H\}.$$
$\rho(E)$ is called the {\em resolvent set}\index{resolvent set} of $E$.
The set $\sigma(E)=\bb C\setminus\rho(E)$ is called the \textit{spectrum}\index{spectrum} of $E$.
In particular, an \textit{eigenvalue}\index{eigenvalue} of $E$, \ie, a number $\la$ such that $\ker(E-\la)\neq\{0\}$, is always in $\sigma(E)$.
The operator $(E-\la)^{-1}$, commonly denoted by $R_\la$, is called the {\em resolvent} of $E$ at $\la$.
The closed graph theorem shows that there is a constant $C$ such that $\|v\|\leq C\|u\|$ whenever $\la\in\rho(E)$ and $(u,v)\in R_\la$, \ie, $R_\la$ is a bounded linear operator when $\la\in\rho(E)$.

\begin{thm}\label{T:B.2}
The resolvent set and the resolvent of a closed linear relation $E$ have the following properties.
\begin{enumerate}
  \item The resolvent set $\rho(E)$ is open.
  \item If $\la,\mu\in\rho(E)$ the \textit{resolvent relation}\index{resolvent relation}
        $R_\la-R_\mu=(\la-\mu) R_\la R_\mu$ holds.
  \item If $E$ is self-adjoint, then $\bb C\setminus\bb R\subset \rho(E)$.
  \item If $E$ is self-adjoint, then $R_\la^*=R_\ol$.
\end{enumerate}
\end{thm}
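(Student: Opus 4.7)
The plan is to verify the four items in order. Two algebraic facts about adjoints of relations, which I would derive first by unwinding the definition of $S^*$, will be used repeatedly: $(S^{-1})^*=(S^*)^{-1}$ for any relation $S$, and $(E-\la)^*=E^*-\ol$. Both amount to a symmetric quantifier swap in the defining condition $\<f,x\>=\<u,y\>$ for $(u,f)\in S^*$, using the fact that $\la\id$ is an everywhere defined bounded operator (so composition on either side creates no domain complications).

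For (1), I would fix $\la_0\in\rho(E)$ and set $C=\|R_{\la_0}\|$. Since $R_{\la_0}$ is a bounded operator defined on all of $\mc H$, the Neumann series $B=\sum_{k\geq0}(\la-\la_0)^k R_{\la_0}^{k+1}$ converges in operator norm for $|\la-\la_0|<1/C$, and the identity $E-\la=(E-\la_0)\bigl(\id-(\la-\la_0)R_{\la_0}\bigr)$ (valid on $\dom E$, since the second factor is an everywhere defined operator) shows that $B$ really is $(E-\la)^{-1}$. For (2), I would chase definitions: given $f\in\mc H$, write $u=R_\la f$ and $v=R_\mu f$, so that $(u,f+\la u),(v,f+\mu v)\in E$. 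Linearity of $E$ gives $(u-v,\la u-\mu v)\in E$ and hence $(u-v,(\la-\mu)v)\in E-\la$, i.e.\ $R_\la\bigl((\la-\mu)v\bigr)=u-v$; substituting $v=R_\mu f$ yields the asserted identity.

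For (3), assume $E=E^*$ and $\la=\alpha+i\beta$ with $\beta\neq0$. Self-adjointness of $E$ forces $\<g,u\>\in\bb R$ for every $(u,g)\in E$; writing $g=f+\la u$ for $(u,f)\in E-\la$ then gives $\Im\<f,u\>=-\beta\|u\|^2$, whence the fundamental energy estimate $|\beta|\,\|u\|\leq\|f\|$. This bound supplies at once the injectivity of $E-\la$ and the boundedness of its inverse on $\ran(E-\la)$; closedness of $\ran(E-\la)$ then follows from the closedness of $E$, since any Cauchy sequence of right-hand sides forces the corresponding $u_n$'s to be Cauchy. Density of the range comes from $\ran(E-\la)^\perp=\ker((E-\la)^*)=\ker(E-\ol)=\{0\}$, where the last equality is the same energy estimate applied at $\ol$.

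For (4), the two preparatory identities collapse the computation to one line:
$$R_\la^*=\bigl((E-\la)^{-1}\bigr)^*=\bigl((E-\la)^*\bigr)^{-1}=(E^*-\ol)^{-1}=(E-\ol)^{-1}=R_{\ol}.$$
The main obstacle is therefore not analytic but bookkeeping: since scalar multiplication and addition of relations do not always invert cleanly, one has to verify carefully that inversion and adjunction nevertheless interact with the shift by $\la\id$ exactly as in the operator case. I would prove the two adjoint identities up front with that caution, after which (1)–(4) fall out without further difficulty.
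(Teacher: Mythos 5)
Your proof is correct. The paper itself states Theorem \ref{T:B.2} without proof---it is part of the background survey in Appendix \ref{ALR}, which follows Bennewitz \cite{MR0454398}---so there is no in-paper argument to compare against; your route (the two adjoint identities $(S^{-1})^*=(S^*)^{-1}$ and $(E-\la)^*=E^*-\ol$, the Neumann-series factorization for openness, the direct relation chase for the resolvent identity, and the energy estimate $|\Im\la|\,\|u\|\le\|f\|$ plus $(\ran(E-\la))^\perp=\ker(E-\ol)$ for item (3)) is the standard one and every step checks out for relations, including the point you rightly flag about composing and inverting through the everywhere-defined bounded factor. The only blemish is the sign in $\Im\<f,u\>=-\beta\|u\|^2$: with the paper's convention that $\<\cdot,\cdot\>$ is linear in the second entry one gets $+\beta\|u\|^2$, but this is immaterial since only $|\Im\<f,u\>|$ enters the estimate.
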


\subsection{Extension theory for symmetric relations}
\begin{thm}\label{vonNeumann1}
If $E$ is a closed symmetric relation in $\mc H\exsum\mc H$ and $\la\in\bb C\setminus\bb R$, then $E^*=E\dot+D_\la\dot+D_{\ol}$.
The space $D_\la\dot+D_{\ol}$ is closed.
If $\lambda=\pm i$ the sum is, in fact, orthogonal, \ie, $E^*=E\oplus D_i\oplus D_{-i}$.
\end{thm}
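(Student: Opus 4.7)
The plan rests on the classical estimate for symmetric relations: if $(u,f)\in E$ then $\<u,f\>=\<f,u\>$ is real (because $E\subset E^*$), so expanding $\|f-\la u\|^2$ yields $\|f-\Re(\la)u\|^2+\Im(\la)^2\|u\|^2$, whence $\|f-\la u\|\geq|\Im\la|\,\|u\|$ for all $(u,f)\in E$ and $\la\in\bb C\setminus\bb R$. This bound implies that $\ran(E-\la)$ is closed in $\mc H$: if $(u_n,f_n)\in E$ and $f_n-\la u_n\to h$, then $(u_n)$ is Cauchy, so $u_n\to u$ and $f_n\to h+\la u$, and closedness of $E$ yields $(u,h+\la u)\in E$, whence $h\in\ran(E-\la)$. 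Combined with the standard identity $\ran(E-\la)^\perp=\ker(E^*-\ol)$ and the observation that $\ker(E^*-\ol)$ is exactly the first-component projection of $D_{\ol}$, I obtain the orthogonal decomposition $\mc H=\ran(E-\la)\oplus\ker(E^*-\ol)$.

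For existence of the decomposition in $E^*$, I would take any $(v,g)\in E^*$ and split
$$g-\la v=h+k,\quad h\in\ran(E-\la),\ k\in\ker(E^*-\ol).$$
Choose $(u_0,f_0)\in E$ with $h=f_0-\la u_0$ and set $u_{\ol}=k/(\ol-\la)\in\ker(E^*-\ol)$, so $(u_{\ol},\ol u_{\ol})\in D_{\ol}$. A direct substitution shows that $(v,g)-(u_0,f_0)-(u_{\ol},\ol u_{\ol})=(u_\la,\la u_\la)$ where $u_\la=v-u_0-u_{\ol}$; this last element lies in $E^*$ since $(v,g),(u_0,f_0),(u_{\ol},\ol u_{\ol})$ all do, and hence it lies in $D_\la$. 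This proves $E^*\subset E+D_\la+D_{\ol}$, and the reverse inclusion is immediate since each summand sits inside $E^*$.

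For directness of the sum, suppose $(u_0,f_0)+(u_\la,\la u_\la)+(u_{\ol},\ol u_{\ol})=0$. Eliminating $u_\la$ between the first- and second-component equations yields $f_0-\la u_0=(\la-\ol)u_{\ol}$; the left side lies in $\ran(E-\la)$ and the right in $\ker(E^*-\ol)=\ran(E-\la)^\perp$, so both vanish, forcing $u_{\ol}=0$ (since $\la\neq\ol$) and $f_0=\la u_0$. Swapping the roles of $\la$ and $\ol$ in the same argument gives $u_\la=0$ and $f_0=\ol u_0$; combining the two then forces $u_0=f_0=0$. Closedness of $D_\la\dot+D_{\ol}$ follows from the observation that the construction above produces $u_\la$, $u_{\ol}$, and $(u_0,f_0)$ as bounded linear functions of $(v,g)\in E^*$: the map $(v,g)\mapsto g-\la v$ is bounded, so are the orthogonal projections onto $\ran(E-\la)$ and $\ker(E^*-\ol)$, and division by the nonzero scalar $\ol-\la$ is bounded. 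Hence the projection $E^*\to E$ along $D_\la\dot+D_{\ol}$ is continuous, and its kernel $D_\la\dot+D_{\ol}$ is closed.

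The orthogonality statement for $\la=\pm i$ reduces to two brief inner-product computations. For $(u,f)\in E$ and $(v,\mu v)\in D_\mu$, the adjoint identity $\<\mu v,u\>=\<v,f\>$ translates to $\<f,v\>=\mu\<u,v\>$, so
$$\<(u,f),(v,\mu v)\>=\<u,v\>+\mu\<f,v\>=(1+\mu^2)\<u,v\>,$$
while for $(u_1,\la u_1)\in D_\la$ and $(u_2,\ol u_2)\in D_{\ol}$,
$$\<(u_1,\la u_1),(u_2,\ol u_2)\>=\<u_1,u_2\>+\ol^2\<u_1,u_2\>=(1+\ol^2)\<u_1,u_2\>.$$
When $\la=\pm i$ both factors $1+\la^2$ and $1+\ol^2$ vanish, yielding $E\perp D_\la$, $E\perp D_{\ol}$, and $D_\la\perp D_{\ol}$, so the sum is orthogonal. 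The main obstacle is really the closed-range lemma in the first paragraph; once $\ran(E-\la)$ is known to be closed, everything else is bookkeeping with the resulting orthogonal decomposition of $\mc H$ and the fact that $D_\la$ and $D_{\ol}$ are graphs of scalar multiples of the identity on their respective pieces.
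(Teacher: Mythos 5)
Your proof is correct and complete: the a priori bound $\|f-\la u\|\geq|\Im\la|\,\|u\|$ on $E$, the closedness of $\ran(E-\la)$, the identity $\ran(E-\la)^\perp=\ker(E^*-\ol)$, the explicit construction of the three summands, the directness argument, the bounded-projection argument for closedness of $D_\la\dot+D_{\ol}$, and the $(1+\mu^2)$ inner-product computations all check out under the paper's conventions (inner product linear in the second entry). The paper itself states this theorem without proof, deferring to Bennewitz's exposition; what you have written is the standard von Neumann decomposition argument transposed to relations, so there is no divergence of approach to report.
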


\begin{cor} \label{C:B.3}
As long as $\la$ remains in either the upper or the lower half plane $\dim D_\la$ is independent of $\la$.
\end{cor}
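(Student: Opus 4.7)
The plan is to show that $\la\mapsto\dim D_\la$ is locally constant on $\bb C\setminus\bb R$; the corollary then follows at once by connectedness of the upper and lower half planes. Throughout I work with $K_\la=\{u\in\mc H:(u,\la u)\in E^*\}$, the image of $D_\la$ under the first-component projection; since this projection is a bijection onto $K_\la$, we have $\dim D_\la=\dim K_\la$. Unwinding the definition of $E^*$ and keeping track of the convention that the scalar product is linear in its second argument identifies $K_\la$ as the orthogonal complement of $\ran(E-\ol)$.

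The central analytic fact I will need is the estimate
$$\|f-\la u\|^2=\|f-\Re(\la)u\|^2+|\Im(\la)|^2\|u\|^2\geq|\Im(\la)|^2\|u\|^2$$
for every $(u,f)\in E$ and every $\la\in\bb C$. This is a routine Hilbert-space expansion whose cross term vanishes because $\<u,f\>$ is real, a fact obtained by applying the inclusion $E\subset E^*$ to the pair $(u,f)$ against itself. Combined with the closedness of $E$, this estimate shows that $\ran(E-\ol)$ is closed whenever $\la\notin\bb R$, so one has the orthogonal decomposition $\mc H=\ran(E-\ol)\oplus K_\la$.

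Now fix $\la_0\in\bb C\setminus\bb R$ and let $P_0$ denote the orthogonal projection of $\mc H$ onto $K_{\la_0}$. I will show that $P_0|_{K_\la}:K_\la\to K_{\la_0}$ is injective whenever $|\la-\la_0|<|\Im(\la_0)|$. If $u\in K_\la$ lies in the kernel of $P_0$, then $u\in\ran(E-\ov{\la_0})$, so I may write $u=g-\ov{\la_0}v$ for some $(v,g)\in E$. Testing the orthogonality $u\perp\ran(E-\ol)$ against this specific pair produces the identity
$$\|u\|^2=(\ol-\ov{\la_0})\<u,v\>,$$
while the basic estimate applied to $(v,g)\in E$ with parameter $\ov{\la_0}$ yields $\|v\|\leq\|u\|/|\Im(\la_0)|$. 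Cauchy--Schwarz then forces $\|u\|\leq(|\la-\la_0|/|\Im(\la_0)|)\|u\|$, so $u=0$ under the stated smallness assumption. Hence $\dim K_\la\leq\dim K_{\la_0}$ on the disk $|\la-\la_0|<|\Im(\la_0)|$; restricting further to $|\la-\la_0|<|\Im(\la_0)|/2$ ensures $|\la_0-\la|<|\Im(\la)|$, so swapping the roles of $\la$ and $\la_0$ delivers the reverse inequality on a neighborhood of $\la_0$.

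The main obstacle I expect is keeping the conjugations straight in the orthogonality identity giving $\|u\|^2=(\ol-\ov{\la_0})\<u,v\>$, since in the relation setting the defining condition for $E^*$ and the description of $\ran(E-\ol)$ must be handled symbolically rather than through the mediation of an underlying operator; once this identity is established, the remainder of the argument is standard.
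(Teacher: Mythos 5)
Your proof is correct. Note that the paper itself supplies no proof of this corollary: Appendix \ref{ALR} is a review of known material following Bennewitz, and the statement is simply placed after Theorem \ref{vonNeumann1} as a standard fact. It is worth observing that it is \emph{not} a formal consequence of that theorem: the decomposition $E^*=E\dot+D_\la\dot+D_\ol$ only shows that $\dim D_\la+\dim D_\ol$ is independent of $\la\in\bb C\setminus\bb R$, not that each summand separately is locally constant, so some additional analytic input is genuinely required. Your argument supplies exactly the classical von Neumann perturbation proof, transposed correctly to the relation setting: the identification $K_\la=\ran(E-\ol)^\perp$ is consistent with the paper's convention that scalar products are linear in the second entry and with its definition of $E^*$; the lower bound $\|f-\la u\|\geq|\Im\la|\,\|u\|$ uses only the reality of $\<u,f\>$ for $(u,f)\in E$, which is where symmetry enters; closedness of $E$ gives closedness of $\ran(E-\ol)$, so $\ker P_0=\ran(E-\ov{\la_0})$ as you use; and the injectivity estimate $\|u\|\leq(|\la-\la_0|/|\Im\la_0|)\|u\|$ is right, as is the shrinking of the disk to radius $|\Im\la_0|/2$ to make the argument symmetric. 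The only thing I would flag is cosmetic: you should say explicitly that $E$ is assumed closed and symmetric (as in Theorem \ref{vonNeumann1}); for non-closed $E$ the claim still holds because, as the paper remarks, the deficiency spaces of $E$ and $\ov E$ coincide.
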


The dimensions of $D_{\pm i}$, denoted by $n_\pm$, are called {\em deficiency indices} of $E$.
Note that, if $E$ is not closed, its deficiency spaces coincide with those of $\ov E$.

We now want to characterize the symmetric extensions of $E$.
Let $V=D_i\oplus D_{-i}$ and $d=\dim V=n_++n_-$.
We shall use the operator $\mc J:\mc H\exsum\mc H\to\mc H\exsum\mc H: (u,f)\mapsto (f,-u)$.
If we denote the scalar product of the Hilbert space $\mc H\exsum\mc H$ also by $\<\cdot,\cdot\>$ we get $\<g,u\>-\<v,f\>=\<\mc J(v,g),(u,f)\>$ and hence $\mc J(S^*)=S^\perp$ and $S^*=\mc J(S^\perp)$ for any relation $S\subset\mc H\exsum\mc H$.
Also note that $\dom(V)=\ran(V)$ and $\mc J(V)=V$.

First we note the following fact.
If $F$ is a closed symmetric extension of the closed symmetric relation $E$ in $\mathcal{H}\exsum\mathcal{H}$, then
\begin{equation}\label{fL:B.5}
\dim(F\ominus E)=\dim(E^*\ominus F^*)\leq\dim(E^*\ominus F).
\end{equation}

\begin{thm}\label{vonNeumann2}
Let $E$ be a closed symmetric relation in $\mathcal{H}\exsum\mathcal{H}$.
Then $F$ is a closed symmetric extension of $E$ if and only if
$$F=E\oplus\{(u+v,i(u-v)):(u,v)\in L\},$$
where $L\subset (\dom D_i)\exsum(\dom D_{-i})$ is a norm-preserving closed linear operator.
Moreover, $F$ is self-adjoint if and only if $\dom L=\dom D_i$ and $\ran L=\dom D_{-i}$.
\end{thm}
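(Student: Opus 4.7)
The approach uses the orthogonal decomposition $E^* = E \oplus V$ with $V = D_i \oplus D_{-i}$ from Theorem \ref{vonNeumann1} to identify closed symmetric extensions $F$ with subspaces of $V$, and then to parametrize these by operators from $\dom D_i$ to $\dom D_{-i}$.

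I would first observe that any closed symmetric extension satisfies $E \subset F \subset F^* \subset E^*$, so it decomposes uniquely as $F = E \oplus F_V$ with $F_V := F \cap V$ closed in $V$. Each element of $V$ has a unique representation as $(u+v, i(u-v))$ with $u \in \dom D_i$ and $v \in \dom D_{-i}$, so I define $L := \{(u,v) : (u+v, i(u-v)) \in F_V\}$. The map $\Phi:(u,v) \mapsto (u+v, i(u-v))$ is a bounded linear bijection of $(\dom D_i) \exsum (\dom D_{-i})$ onto $V$ with bounded inverse, so $L$ is closed if and only if $F_V$ is closed.

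Next I would translate symmetry of $F$ into a condition on $L$. Writing $\omega((x_1,f_1),(x_2,f_2)) := \<f_2,x_1\> - \<x_2,f_1\>$, the symmetry of $E$ together with $V \subset E^*$ forces $\omega$ to vanish between any element of $E$ and any element of $V$, so $F \subset F^*$ reduces to the vanishing of $\omega$ on $F_V \times F_V$. A direct computation yields $\omega(\xi_1,\xi_2) = -2i(\<u_2,u_1\> - \<v_2,v_1\>)$ for $\xi_j = (u_j+v_j, i(u_j-v_j))$, and by polarization this vanishing is equivalent to $\|u\|=\|v\|$ for every $(u,v) \in L$, i.e., $L$ is norm-preserving. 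Norm-preservation also forces $L(0) = \{0\}$, so $L$ is single-valued, hence an operator.

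For the self-adjointness claim, the same decomposition gives $F^* = E \oplus W$ where $W$ is the symplectic annihilator of $F_V$ inside $V$, so $F = F^*$ iff $W = F_V$. To prove that $\dom L = \dom D_i$ is necessary, I would argue by contradiction: $L$ is a bounded closed operator, so $\dom L$ is a closed subspace of $\dom D_i$, and a nonzero $u_0 \in \dom D_i$ orthogonal to $\dom L$ produces via $\Phi$ an element $(u_0, iu_0)$ of $W$ which cannot lie in $F_V$ since norm-preservation would then force $u_0 = 0$. A parallel argument establishes $\ran L = \dom D_{-i}$. For sufficiency, when $L$ is a bijective isometry from $\dom D_i$ onto $\dom D_{-i}$, any pair $(u_2, v_2)$ corresponding via $\Phi$ to an element of $W$ satisfies $\<u_2, w\> = \<v_2, Lw\>$ for all $w \in \dom D_i$; using $L^{-1} = L^*$ as operators between the closed subspaces $\dom D_i$ and $\dom D_{-i}$, this yields $v_2 = Lu_2$, so $(u_2, v_2) \in L$. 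The main obstacle I anticipate is the symplectic-annihilator bookkeeping — in particular, verifying that the orthogonal complement of $E$ inside $F^*$ really coincides with $W$ — and justifying $L^* = L^{-1}$ when $L$ is a closed isometry between the possibly non-dense subspaces $\dom D_i$ and $\dom D_{-i}$ of $\mc H$.
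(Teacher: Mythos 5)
Your argument is correct and is essentially the standard von Neumann parametrization that the paper itself relies on: the paper states Theorem \ref{vonNeumann2} without proof, remarking that it is standard and deferring to Bennewitz \cite{MR0454398}, so there is no in-paper proof to compare against. Your reduction via $F=E\oplus(F\cap V)$, the computation of the boundary form $\omega$ on $V$ giving $-2i(\<u_2,u_1\>-\<v_2,v_1\>)$, and the identification $F^*=E\oplus W$ with $W$ the $\omega$-annihilator of $F_V$ in $V$ are all sound; the two points you flag as potential obstacles are unproblematic, since $\dom D_{\pm i}$ are closed subspaces of $\mc H$ and hence Hilbert spaces in their own right, so a surjective isometry between them is unitary and $L^*=L^{-1}$ holds regardless of non-density in $\mc H$.
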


While Theorem \ref{vonNeumann2} is standard, the following characterization is, to our best knowledge, new.
\begin{thm}\label{T:B.6}
Suppose $E$ is a closed symmetric relation in $\mc H\exsum\mc H$ with $d=\dim V<\infty$ and that $m\leq d/2$ is a natural number or $0$.
If $A:E^*\to\bb C^{d-m}$ is a surjective linear operator such that $E\subset \ker A$ and $A\mc JA^*$ has rank $d-2m$ then $\ker A$ is a closed symmetric extension of $E$ for which the dimension of $(\ker A)\ominus E$ is $m$.
Conversely, every proper closed symmetric extension of $E$ is the kernel of such a linear operator $A$.
Finally, $\ker A$ is self-adjoint if and only if $A\mc JA^*=0$ (entailing $m=d/2$).
\end{thm}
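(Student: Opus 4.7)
The plan is to reduce the entire statement to a linear-algebra problem on the finite-dimensional space $V = E^*\ominus E$ of dimension $d$, using the orthogonal decomposition $E^* = E\oplus V$ from Theorem \ref{vonNeumann1}. Since any $A$ with $E\subset\ker A$ factors through $E^*/E \cong V$, it is automatically bounded; writing $\tilde A = A|_V : V\to\bb C^{d-m}$, surjectivity of $A$ is equivalent to surjectivity of $\tilde A$, so $N':=\ker\tilde A$ has dimension $m$ and $\ker A = E\oplus N'$ is closed with $\dim((\ker A)\ominus E) = m$.

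For the symmetry criterion I would first observe that, because $V\subset E^\perp$, $\mc J(V) = V$, and $E$ is symmetric, the sesquilinear form $(x,y)\mapsto\<\mc J y, x\>$ vanishes on $E\times E^*$ and on $E^*\times E$; hence $F:=\ker A$ is symmetric if and only if $\mc J(N')\perp N'$ inside $V$. The orthogonal complement of $N'$ in $V$ is $\ran\tilde A^*$, so this is equivalent to the containment $\mc J(N')\subset\ran\tilde A^*$. The key linear-algebra fact is then
\[
\ker(A\mc JA^*) = (\tilde A^*)^{-1}(\mc J(N')),
\]
using $\mc J^{-1} = -\mc J$ to identify $\mc J^{-1}(N')$ with $\mc J(N')$ as subspaces of $V$. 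Because $\tilde A^*$ is injective this kernel has dimension $\dim(\mc J(N')\cap\ran\tilde A^*)\leq m$, with equality precisely when $\mc J(N')\subset\ran\tilde A^*$. Thus the rank of $A\mc JA^*$ equals $(d-m)-m = d-2m$ if and only if $F$ is symmetric, and in the equality case the containment forces $m\leq d/2$ automatically.

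For the converse, given a proper closed symmetric extension $F$ of $E$, I would set $N' = F\ominus E$, let $m = \dim N'\leq d/2$ (invoking \eqref{fL:B.5}), put $W = V\ominus N'$, and take $A$ to be the orthogonal projection $E^*\to W$ identified with $\bb C^{d-m}$. Then $\ker A = F$ and $A$ is surjective, and the forward direction applied to this $A$ supplies the rank condition. The self-adjointness clause I would settle by writing $F^* = \mc J(F^\perp)$ and splitting $F^\perp = W\oplus\mc J(E)$ and $\mc J F = \mc J(E)\oplus\mc J(N')$ (using $\mc J V = V$), so that $F = F^*$ iff $\mc J(N') = W$. A dimension count forces $m = d/2$, and the equality $\mc J(N') = W$ translates, via $A = P_W$ and $A^* = $ inclusion $W\hookrightarrow E^*$, directly into $A\mc JA^* = 0$; the reverse implication runs along the same lines, as $A\mc JA^* = 0$ gives $\mc J(W)\subset N'$, forcing $d-m\leq m$ and hence equality throughout.

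The main obstacle will be the rank formula for $A\mc JA^*$: everything hinges on carefully tracking how the involution $\mc J$ (with $\mc J^2 = -\id$) interacts with the orthogonal splitting $V = N'\oplus\ran\tilde A^*$, and on verifying that the inequality $m\leq d/2$ in the symmetric case emerges automatically from the subspace containment $\mc J(N')\subset\ran\tilde A^*$ rather than being a separately imposed constraint.
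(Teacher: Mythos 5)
Your proposal is correct and follows essentially the same route as the paper's proof: both reduce matters to the finite-dimensional space $V$, identify $\ran A^*$ with $V\ominus\ker(A|_V)$, and extract the symmetry criterion from the dimension of $\ker(A\mc JA^*)$, which in your notation is $\dim\bigl(\mc J(N')\cap\ran\tilde A^*\bigr)$ and in the paper's is the dimension of $\mc JA^*(\ker A\mc JA^*)$ inside $(\ker A)\cap E^\perp$. The only real difference is organizational: you package the forward step as a biconditional (rank $=d-2m$ iff $\mc J(N')\subset\ran\tilde A^*$ iff $\ker A$ symmetric), which lets the converse and the self-adjointness clause fall out at once, whereas the paper runs the dimension count separately in each direction and invokes \eqref{fL:B.5} for the final clause.
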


\begin{proof}
Suppose that $A\subset E^*\exsum\bb C^{d-m}$ is given with the stated properties.
Since $E$ is closed and $V$ is finite-dimensional, $\ker A$ is closed and $A$ itself is bounded.
Therefore $A^*\subset \bb C^{d-m}\exsum E^*$ is a bounded linear operator defined on all of $\bb C^{d-m}$.
Its range is orthogonal to $\ker A$ and its kernel is orthogonal to $\ran A$.
Thus $A^*$ is injective and $\ran A^*=E^*\cap (\ker A)^\perp$.
This implies $\ran(\mc JA^*)=\mc J(\ran A^*)=(\ker A)^*\cap E^\perp\subset \dom A$.
Hence $B=A\mc J A^*$ is well defined, in fact it is a linear operator from $\bb C^{d-m}$ to itself.
Now note that $S=\mc JA^*(\ker B)$ has dimension $m$, the same as $\ker B$, and that it is a subset of $(\ker A)\cap E^\perp$.
However, $(\ker A)\cap E^\perp=\ker(A|_V)$ also has dimension $m$ since $A|_V$ is surjective.
It follows that $(\ker A)\cap E^\perp=S\subset\ran(\mc JA^*)\subset (\ker A)^*$.
Since $\ker A\subset E^*$ we also have $E\subset (\ker A)^*$ and thus $\ker A\subset (\ker A)^*$, \ie, $\ker A$ is symmetric.

For the converse, suppose that $F$ is a closed symmetric extension of $E$.
Let $D=F\ominus E$, $D'=E^*\ominus F$, and $m=\dim D$.
Since $V=D\oplus D'$, the dimension of $D'$ is $d-m$.
Thus \eqref{fL:B.5} implies that $2m\leq d$.
Pick an orthonormal basis $(v_1,g_1),(v_2,g_2),...,(v_{d-m},g_{d-m})$ of $D'$ and define $A:E^*\to\bb C^{d-m}$ by $A_j(u,f)=\<(v_j,g_j),(u,f)\>$.
Then $A:E^{*}\mapsto\bb C^{d-m}$ is a surjective linear operator whose kernel is $F$ and thus contains $E$.
It follows that $A^*$ is injective and $\ran A^*=D'$.
Again $B=A\mc JA^*$ is well defined and we need to show that $\dim\ker B=m$.
As before we have $\mc JA^*(\ker B)\subset(\ker A)\cap E^\perp=D$ so that $\dim\ker B\leq m$.
Also, if $U$ denotes the $m$-dimensional subspace of $\bb C^{d-m}$ which is mapped by $\mc JA^*$ to $D$, then $U\subset\ker B$ implying $m\leq\dim\ker B$.

Finally, \eqref{fL:B.5} implies that $m=d/2$ if and only if $\ker A=(\ker A)^*$.
\end{proof}

\subsection{Reduction of a self-adjoint relation to a self-adjoint operator}\label{ALR.3}
Suppose $E$ is a self-adjoint linear relation in $\mc H\exsum\mc H$ and define $\mc H_\infty=\{g\in \mc H:(0,g)\in E\}$.
Then $\mc H_\infty$ is a closed subspace of $\mc H_1$.
The following theorem associates an operator, densely defined in $\mc H_0=\mc H_\infty^\perp$, to $E$.

\begin{thm}\label{T:ALR.3.1}
Suppose $E$ is self-adjoint linear relation in $\mc H\exsum\mc H$.
Then the following two statements are true.
  \begin{enumerate}
    \item The domain of $E$ is a dense subset of $\mc H_0$.
    \item $E_0=E\cap(\mc H_0\exsum\mc H_0)$ is a densely defined self-adjoint operator in $\mc H_0\exsum\mc H_0$.
  \end{enumerate}
\end{thm}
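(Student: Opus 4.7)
The plan hinges on the duality $\mc H_\infty=(\dom E)^\perp$, which is the engine for everything. First I would establish it: if $(0,g)\in E$ then for every $(u,f)\in E=E^*$ the defining identity of the adjoint yields $\<g,u\>=\<0,f\>=0$, so $g\perp \dom E$; the converse reads the same identity backwards, using $E=E^*$ once more. Taking orthogonal complements in $\mc H$ gives $\mc H_0=\ov{\dom E}$, which is statement~(1).

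For~(2) I would proceed in three steps. First, $E_0$ is a linear operator, because any $(0,f)\in E_0$ has $f\in \mc H_\infty\cap \mc H_0=\{0\}$. Second, I claim $\dom E_0=\dom E$. Given $(u,f)\in E$, decompose $f=f_0+f_\infty$ with $f_0\in \mc H_0$ and $f_\infty\in \mc H_\infty$; since $(0,f_\infty)\in E$ by definition of $\mc H_\infty$, subtraction gives $(u,f_0)\in E$, and because $u\in\dom E\subset \mc H_0$ by~(1) we obtain $(u,f_0)\in E\cap(\mc H_0\exsum \mc H_0)=E_0$. Combined with~(1), this shows $E_0$ is densely defined in $\mc H_0$.

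Finally, the self-adjointness, which I would prove by the usual double inclusion; write $E_0^\star$ for the adjoint computed inside $\mc H_0\exsum \mc H_0$. The inclusion $E_0\subset E_0^\star$ is immediate since $E_0\subset E=E^*$. For the reverse, suppose $(v,g)\in E_0^\star$, so in particular $v,g\in \mc H_0$. Given an arbitrary $(u,f)\in E$, decompose $f=f_0+f_\infty$ as above so that $(u,f_0)\in E_0$; then $\<g,u\>=\<v,f_0\>$, and since $v\in \mc H_0$ annihilates $f_\infty\in \mc H_\infty$, the right-hand side equals $\<v,f\>$. Thus $(v,g)\in E^*=E$, and since $(v,g)\in \mc H_0\exsum \mc H_0$ by construction we conclude $(v,g)\in E_0$.

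The main obstacle is really a matter of bookkeeping: one must remain alert to which adjoint (that of $E$ in $\mc H\exsum \mc H$ versus that of $E_0$ in $\mc H_0\exsum \mc H_0$) is in play at each step, and exploit at the key moment that elements of $\mc H_0$ are orthogonal to $\mc H_\infty$, so that the $\mc H_\infty$-component of $f$ can be freely discarded. Once that observation is in hand, the proof reduces to the two orthogonal decompositions sketched above.
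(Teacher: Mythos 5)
Your argument is correct and complete: the identity $\mc H_\infty=(\dom E)^\perp$, the reduction $\dom E_0=\dom E$ via the decomposition $f=f_0+f_\infty$, and the double inclusion for self-adjointness of $E_0$ in $\mc H_0\exsum\mc H_0$ all hold as written. The paper itself states this theorem without proof (it is imported from Bennewitz's treatment of linear relations), and your proof is precisely the standard argument one would expect there, so there is nothing to flag.
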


$E_0$ is called the {\em operator part}\index{operator part} of $E$.

We will also need the following result about the relationships between the resolvents of $E$ and $E_0$.
Of course, we consider $E_0$ as a relation $\mc H_0\exsum \mc H_0$.
\begin{thm}\label{T:ALR.3.2}
If $E$ is a self-adjoint linear relation in $\mc H\exsum \mc H$, then $\rho(E)=\rho(E_0)$.
Moreover, the resolvent $R_\la$ of $E$ annihilates the space $\mc H_\infty$, \ie, $R_\la f=0$ whenever $f\in\mc H_\infty$.
Finally, the resolvent of $E_0$ is given by $R_\la\cap(\mc H_0\exsum\mc H_0)$.
\end{thm}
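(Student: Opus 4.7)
My plan is to prove the three assertions in an order that makes the logic cleanest: first the annihilation statement, then the set-theoretic identity that will give the resolvent formula, and finally the equality $\rho(E)=\rho(E_0)$, which is the main technical point.

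The annihilation statement is immediate. If $f\in\mc H_\infty$ then $(0,f)\in E$, so $(0,f)\in E-\la$, hence $(f,0)\in R_\la$; since $\la\in\rho(E)$ makes $R_\la$ a bona fide (single-valued) bounded operator, $R_\la f=0$. Next, I would establish the identity
\[
E_0-\la=(E-\la)\cap(\mc H_0\exsum\mc H_0),
\]
which is a direct unpacking of the definitions, using only that $u\in\mc H_0$ if and only if $\la u\in\mc H_0$. Inverting this identity gives $(E_0-\la)^{-1}=R_\la\cap(\mc H_0\exsum\mc H_0)$, which is exactly the last claim of the theorem, modulo establishing that $\la\in\rho(E_0)$.

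For $\rho(E)\subset\rho(E_0)$: kernel triviality is inherited from $E$, namely $\ker(E_0-\la)\subset\ker(E-\la)=\{0\}$. For the range, given $f\in\mc H_0$, I set $u=R_\la f$ so that $(u,f+\la u)\in E$; for any $g\in\mc H_\infty$ we have $(0,g)\in E$, so self-adjointness of $E$ gives $\<u,g\>=\<f+\la u,0\>=0$, i.e., $u\in\mc H_0$, so $f\in\ran(E_0-\la)$. For the reverse inclusion $\rho(E_0)\subset\rho(E)$: given $f\in\mc H$, decompose $f=f_0+f_\infty$ along $\mc H=\mc H_0\oplus\mc H_\infty$, solve $(u,f_0)\in E_0-\la\subset E-\la$, and add $(0,f_\infty)\in E$ to obtain $(u,f)\in E-\la$, yielding surjectivity. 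Injectivity of $E-\la$ follows by the same pairing trick: if $(u,\la u)\in E$, then testing against any $(0,g)\in E$ via self-adjointness produces $\<u,g\>=0$, so $u\in\mc H_0$; together with $\la u\in\mc H_0$ this places $(u,\la u)$ in $E_0$, so $u\in\ker(E_0-\la)=\{0\}$.

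The main obstacle is the pairing step that forces the resolvent preimage to land in $\mc H_0$ — everywhere else the argument is bookkeeping, but here one must invoke self-adjointness of $E$ (rather than merely symmetry) in order to convert the identity $(0,g)\in E$ into a true orthogonality condition on elements of $\dom(E)$. Once this is observed, both inclusions $\rho(E)\subset\rho(E_0)$ and $\rho(E_0)\subset\rho(E)$, and hence the formula for the resolvent of $E_0$, drop out uniformly from the identity $E_0-\la=(E-\la)\cap(\mc H_0\exsum\mc H_0)$.
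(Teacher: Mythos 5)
Your proof is correct. Note, however, that the paper states Theorem \ref{T:ALR.3.2} without proof --- Appendix \ref{ALR} merely collects these facts, following Bennewitz --- so there is no in-paper argument to compare against; your route via the identity $E_0-\la=(E-\la)\cap(\mc H_0\exsum\mc H_0)$ combined with the orthogonality $\dom(E)\perp\mc H_\infty$ is the standard one and all steps check out. One small remark on your closing comment: the pairing step you identify as the crux needs only $(0,g)\in E\subset E^*$, so symmetry of $E$ already gives $\<g,u\>=0$ for every $u\in\dom(E)$; full self-adjointness is of course used elsewhere (for Theorem \ref{T:ALR.3.1} and for the nonemptiness of the resolvent sets), but not in that particular conversion.
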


\subsection{The spectral theorem for self-adjoint operators}\label{ALR.4}
For easy reference and to fix notation we state here the spectral theorem for self-adjoint operators.
See Rudin \cite{MR1157815} for additional details.

\begin{defn}
Suppose $\mc M$ is a $\sigma$-algebra on $\bb R$ which includes the Borel sets and $\pi$ is a function on $\mc M$ whose values are orthogonal projections in the Hilbert space $\mc H$.
$\pim$ is called a {\em resolution of the identity}\index{resolution of the identity} if it has the following properties.
\begin{enumerate}
  \item $\pim(\emptyset)=0$ and $\pim(\bb R)=\id$.
  \item $\pim(B\cap B')=\pim(B)\pim(B')$ whenever $B, B'\in\mc M$.
  \item If $B, B'\in\mc M$ are disjoint, then $\pim(B\cup B')=\pim(B)+\pim(B')$.
  \item For every $f,g\in\mc H$ the function $t\mapsto \Pim_{f,g}(t)=\<f,\pim((-\infty,t))g\>$ is left-continuous and of bounded variation and hence induces a complex measure defined on $\mc M$.
\end{enumerate}
$\pim(B)$ is called the {\em spectral projection}\index{spectral projection} of $B$.
\end{defn}

\begin{thm}\label{T:B.8}
Suppose $E_0\subset\mc H_0\exsum\mc H_0$ is a densely defined, self-adjoint operator.
Then there exists a unique resolution of the identity $\pim$ such that
$$\<f,E_0g\>=\int t\ d\Pim_{f,g}(t)$$
for any $f\in\mc H_0$ and $g\in\dom E_0$.

Moreover, $\pim$ is concentrated on $\sigma(E_0)$, \ie, $\pim(A)=0$ for $A\subset \rho(E_0)\cap\bb R$, and
$$\<f, R_\la g\>=\int \frac1{t-\la}\ d\Pim_{f,g}(t)$$
for $\la\in\rho(E_0)$.
\end{thm}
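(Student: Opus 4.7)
The plan is to reduce the theorem to the case of a bounded unitary operator via the Cayley transform, and then invoke the spectral theorem for unitary operators obtained from the continuous functional calculus.

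First I would form the Cayley transform $U=(E_0-i\id)(E_0+i\id)^{-1}$. Since $E_0$ is self-adjoint, Theorem \ref{T:B.2} gives $\pm i\in\rho(E_0)$, so $E_0+i\id$ has bounded inverse $R_{-i}$ defined on all of $\mc H_0$. A direct computation using $E_0^*=E_0$ and $\dom E_0=\ran R_{-i}$ shows that $U$ is unitary on $\mc H_0$, with $1$ not a point eigenvalue, and that $E_0=i(\id+U)(\id-U)^{-1}$ on $\dom E_0=\ran(\id-U)$.

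Next I would establish the spectral theorem for $U$ on the unit circle $\bb T$. For this I would first build the continuous functional calculus: by Stone--Weierstrass, continuous functions on $\sigma(U)\subset\bb T$ are uniformly approximated by Laurent polynomials in $z$, and evaluating such polynomials at $U$ (using $U^{-1}=U^*$) gives a $*$-homomorphism $\phi\mapsto\phi(U)$ satisfying $\|\phi(U)\|\le\|\phi\|_\infty$. For each $f\in\mc H_0$ the functional $\phi\mapsto\<f,\phi(U)f\>$ is positive and bounded on $C(\bb T)$, so by the Riesz representation theorem it is integration against a positive Borel measure, and by polarization we obtain complex measures $\mu_{f,g}$ with $\<f,\phi(U)g\>=\int\phi\,d\mu_{f,g}$. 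A standard weak-limit argument promotes this to a bounded Borel functional calculus, producing the spectral projections $\pi_U(B)$ for Borel $B\subset\bb T$.

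Then I would transfer the resolution of identity to $\bb R$ via the homeomorphism $\Phi:\bb T\setminus\{1\}\to\bb R$, $\Phi(z)=i(1+z)/(1-z)$, by setting $\pi(A)=\pi_U(\Phi^{-1}(A))$ for Borel $A\subset\bb R$. Properties (1)--(3) of a resolution of the identity transfer immediately. Property (4) (left-continuity and bounded variation of $\Pim_{f,g}$) follows from the countable additivity of $\mu_{f,g}$. That $\pi(\{\pm\infty\})=0$, so that nothing is lost, uses that $1$ is not a point eigenvalue of $U$, forcing $\pi_U(\{1\})=0$. The spectral identity $\<f,E_0g\>=\int t\,d\Pim_{f,g}(t)$ for $g\in\dom E_0$ is then obtained by writing $E_0g=i(\id+U)(\id-U)^{-1}g$, substituting $t=\Phi(z)$, and invoking the functional calculus on the bounded function $z\mapsto i(1+z)/(1-z)$ restricted to $\supp\pi_U\subset\bb T\setminus\{1\}$. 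Similarly, the resolvent formula follows from $R_\la=(U-\mu_\la\id)^{-1}\cdot(\text{bounded})$ for an appropriate $\mu_\la$ on the circle, or more directly by checking that the operator $B_\la$ defined by $\<f,B_\la g\>=\int(t-\la)^{-1}d\Pim_{f,g}(t)$ satisfies $B_\la(E_0-\la)=\id$ on $\dom E_0$. Concentration on $\sigma(E_0)$ comes from $\sigma(U)=\Phi^{-1}(\sigma(E_0))\cup\{1\}$ combined with $\pi_U(\{1\})=0$, and uniqueness of $\pi$ follows from the Stieltjes inversion formula applied to $\la\mapsto\<f,R_\la f\>$.

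The main obstacle is the passage from the continuous to the bounded Borel functional calculus for $U$, i.e.\ from $\phi(U)$ for $\phi\in C(\bb T)$ to $\pi_U(B)$ for Borel $B$. The estimate $\|\phi(U)\|\le\|\phi\|_\infty$ is what controls the weak limit, but verifying the multiplicative property $\pi_U(B\cap B')=\pi_U(B)\pi_U(B')$ requires a double approximation: first using $\phi(U)\psi(U)=(\phi\psi)(U)$ for continuous functions, then extending in $\phi$ and then in $\psi$ by dominated convergence against each of the measures $\mu_{f,g}$. A secondary subtlety is ensuring well-definedness of $i(1+z)/(1-z)$ on $\supp\pi_U$, which hinges precisely on the fact that $1$ is not a point eigenvalue of the Cayley transform of a self-adjoint operator.
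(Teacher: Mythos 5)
The paper does not prove this theorem at all: it is stated in Appendix \ref{ALR.4} purely ``for easy reference and to fix notation,'' with a citation to Rudin's \emph{Functional Analysis}. So there is no in-paper argument to compare against; what you have written is essentially the standard Cayley-transform proof, which is in fact the route taken in the cited reference. Your outline is sound: the unitarity of $U=(E_0-i\id)(E_0+i\id)^{-1}$, the absence of $1$ as an eigenvalue, the continuous-to-Borel functional calculus for $U$, the push-forward of $\pi_U$ under $\Phi(z)=i(1+z)/(1-z)$, and uniqueness via Stieltjes inversion are all correctly placed, and you rightly identify the two delicate points (multiplicativity of the Borel calculus, and the role of $1\notin\sigma_p(U)$).

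One step is stated incorrectly as written, though it is repairable by an argument you have already half-written down. You claim that $\supp\pi_U\subset\bb T\setminus\{1\}$ and that $z\mapsto i(1+z)/(1-z)$ is bounded there. When $E_0$ is unbounded (the case of interest in this paper), $\sigma(U)$ accumulates at $1$, so $1\in\supp\pi_U$ even though $\pi_U(\{1\})=0$, and $\Phi$ is unbounded on $\supp\pi_U$. The correct way to get $\<f,E_0g\>=\int t\,d\Pim_{f,g}(t)$ is the substitution your own formula $E_0g=i(\id+U)(\id-U)^{-1}g$ suggests: write $g=(\id-U)h$, so that $\<f,E_0g\>=\<f,i(\id+U)h\>$ involves only the bounded functions $1\pm z$ of $U$, and then use $d\mu_{f,g}=(1-z)\,d\mu_{f,h}$ from multiplicativity to convert this into $\int\Phi(z)\,d\mu_{f,g}(z)$; integrability of $\Phi$ against $\mu_{f,g}$ falls out of the same computation rather than being assumed. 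With that correction the argument is complete and matches the standard treatment the paper points to.
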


If $E$ is a self-adjoint relation in $\mc H\exsum\mc H$ and $E_0\subset\mc H_0\exsum\mc H_0$ is its operator part, we extend the domain of definition of the spectral projections $\pim(B)$ from $\mc H_0$ to $\mc H$ by setting $\pim(B)f=0$ whenever $f\in \mc H_\infty$.
Thus $\pim(\bb R)$ becomes the orthogonal projection from $\mc H$ onto $\mc H_0$.

\bibliographystyle{plain}
%\bibliography{e:/public_html/bibtexbrowser/all}
\def\cprime{$'$}

\end{document}